\documentclass[final,leqno]{siamltex}
\usepackage{amsmath,amssymb,mathtools,bm}
\usepackage{graphicx,booktabs,array,float}
\usepackage{microtype}
\usepackage[bookmarksopen,bookmarksopenlevel=0,bookmarksdepth=2]{hyperref}
\usepackage{doi}

\numberwithin{equation}{section}
\allowdisplaybreaks[2]
\newtheorem{assumption}[theorem]{Assumption}
\newtheorem{remark}[theorem]{Remark}

\newcommand{\R}{\mathbb R}
\newcommand{\Ih}{I_h}
\newcommand{\dt}{\delta_t}
\newcommand{\norm}[2]{\lVert #1\rVert_{#2}}

\newcommand{\pt}[1]{P^{\rm tan}[#1]}
\newcommand{\pn}[1]{P^N[#1]}
\newcommand{\mass}[3]{\bigl(#2\cdot\widetilde N_h[#1],\,#3\cdot\widetilde N_h[#1]\bigr)^h_{\Gamma[#1]}}
\newcommand{\stiff}[3]{\bigl(\partial_{s[#1]}#2,\,\partial_{s[#1]}#3\bigr)_{\Gamma[#1]}}

\newcommand{\Md}{d_{\mathcal M}}
\newcommand{\dd}{\,\mathrm d}

\title{Convergence of the Original BGN Method for mean curvature flow}

\author{Yifei Li\thanks{Mathematisches Institut, Universit\"at T\"ubingen, Auf der Morgenstelle 10, 72076 T\"ubingen, Germany (yifei.li@mnf.uni-tuebingen.de).}}

\date{}

\begin{document}
\maketitle

\begin{abstract}
We prove that the original Barrett--Garcke--N\"urnberg (BGN) method for mean curvature flow of curves converges in the manifold distance with order $h^2$ for the time step $\tau=h^2$. The key new technique is backward error analysis, which stems from the numerical analysis of ordinary differential equations. By combining the expansions of the discrete velocity in space and time, we construct an approximation flow whose interpolation satisfies the BGN method with an improved defect, of order $h^4$ with tangential part of order $h^6$. This construction also identifies the tangential velocity of the BGN method and shows that $\tau=O(h^2)$ is the correct relation between $\tau$ and $h$. With the improved defect, we then prove $H^1$ superconvergence of order $h^4$ of the numerical solution to the approximation flow using techniques of evolving surface finite element methods, which yields the convergence in the manifold distance. Numerical experiments confirm the predicted orders of defects.
\end{abstract}

\begin{keywords}
mean curvature flow, Barrett--Garcke--N\"urnberg method, backward error analysis, convergence analysis
\end{keywords}

\begin{AMS}
65M60, 65M12, 65M15, 53E10
\end{AMS}

\pagestyle{myheadings}\markboth{Y.~Li}{Convergence of the original BGN method}

\section{Introduction}
The parametric finite element method of Barrett, Garcke, and N\"urnberg (BGN) was first introduced for the mean curvature flow and related geometric evolution equations of curves \cite{BGN2007a,BGN2007b}, and was later extended to surfaces \cite{BGN2008}. Its key feature is that it allows the tangential motion. The tangential velocity is therefore not given a priori, but is determined implicitly by the discrete equations. For curves, this implicit tangential motion leads to an asymptotically equidistributed mesh, so the method avoids mesh degeneration without any remeshing. It is unconditionally stable and requires only piecewise linear elements and one linear solve per time step. Owing to these properties, the BGN approach has been extended to a wide range of problems; see the review \cite{BGN2020} and the references therein. For example, it has been developed into efficient solvers for solid-state dewetting \cite{BaoJiangWangZhao2017,ZhaoJiangBao2020,ZhaoJiangBao2021}, and into structure-preserving methods that conserve the enclosed area or volume exactly while retaining energy stability, for surface diffusion, anisotropic surface diffusion, and axisymmetric geometric flows \cite{BaoZhao2021,LiBao2021,BaoJiangLi2023,BaoGarckeNurnbergZhao2022}. Numerical studies consistently exhibit convergence in manifold distance \cite{BaoJiangWangZhao2017,JiangSuZhang2024}. However, even for the original BGN method for mean curvature flow, a rigorous convergence analysis is still missing.

Evolving surface finite element methods (ESFEM) provide a powerful framework for proving convergence of numerical methods for geometric PDEs \cite{DziukElliott2013}. The theory was first developed for surfaces moving with a prescribed velocity \cite{DziukElliott2007,LubichMansourVenkataraman2013,Kovacs2018}. It was then extended to geometric flows whose velocity is given explicitly by geometric quantities, such as mean curvature flow \cite{KovacsLiLubich2019,Li2020,BaiLi2024} and Willmore flow \cite{KovacsLiLubich2021}, and further to methods whose tangential velocity is given implicitly, such as the minimal deformation rate approach \cite{HuLi2022,HuangLiTang2026}. These results cover fully discrete schemes \cite{KovacsLubich2018}. Convergence has also been obtained for BGN-type methods with additional stabilization or modified tangential motion \cite{BaiLi2025,BaiGarckeVeerapaneni2026,Bai2026}. The convergence proofs in this framework consist of two parts, stability and consistency. Stability is proved by energy estimates combined with geometric perturbation estimates between the discrete surface and the interpolated exact surface. Consistency means that the interpolated exact solution satisfies the numerical method up to a small defect, which is again bounded by geometric perturbation estimates.

However, none of these arguments applies to the original BGN method, for two reasons. First, the stability analysis requires the velocity, including its tangential part, to be uniquely determined, and it yields convergence of the parametrization in $L^2$ and $H^1$. For the BGN method, the tangential velocity is implicitly determined on both the mesh size $h$ and the time step $\tau$. Also, the convergence can only be expected in a parametrization-independent sense, such as the manifold distance. Second, the stability estimate relies on inverse inequalities to keep the numerical surface close to the interpolated exact surface in $W^{1,\infty}$. This requires the consistency defect to be small enough to compensate for the negative powers of $h$, which typically require finite elements of degree $k\geq2$, while the original BGN method uses linear elements.

The main contribution of this paper is the first convergence proof for the original BGN method for mean curvature flow, with the time step $\tau=h^2$, without any stabilization or modification of the tangential motion. Under a regularity assumption, we prove that the numerical polygons converge to the exact curves in the manifold distance,
\begin{equation*}
 \max_m\Md(\Gamma[X_h^m],\Gamma(t_m))\leq Ch^2.
\end{equation*}

Based on the framework of ESFEM, our main tool is backward error analysis, which is a classical tool in the numerical analysis of ordinary differential equations \cite{HairerLubichWanner2006}. Our idea is to use it to obtain an improved defect, such that the ESFEM analysis can be applied to the linear finite element. Instead of comparing the numerical solution with the interpolation of the exact solution $X$, we seek a modified flow such that its interpolation satisfies the BGN method with a smaller defect. In space, for the interpolation of a smooth curve to satisfy the BGN method with a small defect, its discrete velocity has to be of the form
\begin{equation*}
 V=V_0+hV_1+h^2V_2+\cdots.
\end{equation*}
In time, the discrete velocity is the difference quotient $(Y(t+\tau)-Y(t))/\tau$, which leads to a modified equation
\begin{equation*}
 Y_t=V^0+\tau V^1+\tau^2V^2+\cdots.
\end{equation*}
Combining the two expansions with a suitable relation between $\tau$ and $h$, we obtain a single expansion
\begin{equation*}
 Y_t=V^{[0]}+hV^{[1]}+h^2V^{[2]}+\cdots.
\end{equation*}
In this paper, we construct the first three terms $V^{[0]}$, $V^{[1]} = 0$, and $V^{[2]}$ explicitly. The defect of the interpolation of the exact solution $X(t)$ is of order $h^2$, with tangential defect of order $h^2$. In contrast, the approximation flow $Y^{[2]}$ with velocity $V^{[0]}+h^2V^{[2]}$ has an improved defect, of order $h^4$ with tangential part of order $h^6$. Moreover, the normal part of $V^{[0]}$ is $-\kappa\vec n$. And its tangential part is determined by an elliptic equation, which also shows that $\tau=O(h^2)$ is the correct relation between $\tau$ and $h$. Hence the approximation flow $Y^{[0]}$ with velocity $V^{[0]}$ differs from $X$ only by a tangential velocity, and therefore describes the same curves, $\Gamma[Y^{[0]}(t)]=\Gamma(t)$.

Section~\ref{sec:scheme} introduces the BGN method and the main result. Section~\ref{sec:tangent} proves the normal--tangential estimates and the geometric perturbation estimates on polygons. Section~\ref{sec:bea} expands the defect, constructs the approximation flows $Y^{[0]}$ and $Y^{[2]}$, and proves their improved defects. Section~\ref{sec:proof} proves the convergence, and Section~\ref{sec:numerics} verifies the defect orders and convergence rates numerically.

\section{The BGN method and main result}\label{sec:scheme}
\subsection{Finite element spaces on an interval}
Let $I=\R/\mathbb Z$. For an integer $J\geq3$, set $h=J^{-1}$ and $\rho_j=jh$, with periodic indices. The scalar space $S_h$ consists of continuous periodic functions that are affine on each interval $(\rho_{j-1},\rho_j)$. The interpolation operator $\Ih$ assigns prescribed nodal values to their piecewise affine interpolant; it also applies componentwise to vector fields and to products specified at the nodes.

For a scalar or vector nodal function, define
\begin{equation*}
 D_hv_j=\frac{v_j-v_{j-1}}h,\qquad \overline v_j=\frac{v_j+v_{j-1}}2,
\end{equation*}
\begin{equation*}
 (v,w)_{0,h}=h\sum_jv_j\cdot w_j,\qquad \norm v{0,h}^2=(v,v)_{0,h},\qquad |v|_{1,h}^2=h\sum_j|D_hv_j|^2,
\end{equation*}
\begin{equation*}
 \norm v{1,h}^2=\norm v{0,h}^2+|v|_{1,h}^2.
\end{equation*}
These definitions refer only to the uniform parameter grid. In particular, none depends on a curve or a numerical solution. We use the ordinary norms
\begin{equation*}
 \norm v0=\norm v{L^2(I)},\qquad \norm v1^2=\norm v{H^1(I)}^2=\norm v0^2+\norm{v_\rho}0^2.
\end{equation*}
For $v_h\in [S_h]^2$, $|v_h|_{1,h}=\|(v_h)_\rho\|_0$, and
\begin{equation*}
 \norm{v_h}0\sim\norm{v_h}{0,h},\qquad \norm{v_h}1\sim\norm{v_h}{1,h}.
\end{equation*}
The constants are independent of $h$, we use $\sim$ to denote the equivalence. We will freely use these equivalences, as well as the exact product formula
\begin{equation*}
 D_h(vw)_j=\overline v_jD_hw_j+(D_hv_j)\overline w_j.
\end{equation*}
The time levels are $t_m=m\tau$, with $\tau=h^2$, and
\begin{equation*}
 \dt v^{m+1}=\frac{v^{m+1}-v^m}{\tau}.
\end{equation*}

\subsection{Geometry-dependent finite element spaces}
Let $R$ denote clockwise rotation through $\pi/2$. Suppose there is a regular counterclockwise parametrization $X:I\times[0,T]\to\R^2$. The image is denoted by $\Gamma(t)=X(I,t)$. The unit tangent vector $\vec\tau$, outward unit normal $\vec n$, the arclength derivative $\partial_s$ are given by
\begin{equation*}
 \vec\tau=\frac{X_\rho}{|X_\rho|},\qquad \vec n=R\vec\tau,\qquad \partial_s=\frac1{|X_\rho|}\partial_\rho.
\end{equation*} The curvature is
\begin{equation}\label{eq:Frenet-relations}
 \partial_s\vec\tau=-\kappa\vec n,\qquad \partial_s\vec n=\kappa\vec\tau.
\end{equation}
Thus $\kappa>0$ on a unit circle.

The problem is a counterclockwise parametrization $X:I\times[0,T]\to\R^2$ satisfying
\begin{equation*}
 X_t=-\kappa[X]\vec n[X],\qquad X(\rho,0)=X_0(\rho).
\end{equation*}

The interpolation of $X$, $X_h = I_h X$, gives a regular embedded polygon, define
\begin{equation*}
 \Gamma_h[X_h]:=X_h(I),\qquad S_h(\Gamma_h[X_h])= \{v_h\circ(X_h)^{-1}:v_h\in S_h\}.
\end{equation*}

For the polygon $\Gamma_h[X_h]$, we further consider the unit tangent vector, the unit normal vector, and the arclength derivative as
\begin{equation*}
 \begin{gathered}
 \vec h_j[X_h]=hD_h X_j, \qquad \vec\tau_j[X_h]=\frac{\vec h_j[X_h]}{|\vec h_j[X_h]|},\\
 \vec n_j[X_h]=R\vec\tau_j[X_h], \qquad \partial_{s[X_h]} w|_{\vec{h}_j}=\frac{h D_h w_j}{|\vec h_j[X_h]|}.
 \end{gathered}
\end{equation*}

For functions belonging to $S_h(\Gamma_h[X_h])$, the mass lumping is
\begin{equation*}
 \begin{aligned}
 &(v,w)^h_{\Gamma_h[X_h]} =\frac12\sum_j|\vec h_j[X_h]|\bigl(v_j\cdot w_j+v_{j-1}\cdot w_{j-1}\bigr), \quad \norm v{0,\Gamma_h[X_h]}^2=(v,v)^h_{\Gamma_h[X_h]},\\
 & |v|_{1, \Gamma_h[X_h]}^2 =  (\partial_{s[X_h]}v,\partial_{s[X_h]}v)_{\Gamma_h[X_h]},\quad \norm v{1,\Gamma_h[X_h]}^2=\norm v{0,\Gamma_h[X_h]}^2 + |v|_{1, \Gamma_h[X_h]}^2.
 \end{aligned}
\end{equation*}
If
\begin{equation*}
 0<c\leq |\vec h_j[X_h]|/h\leq C,
\end{equation*}
then for $v_h\in S_h$, $\left\|v_h\right\|_{0, \Gamma_h[X_h]}\sim\left\|v_h\right\|_{0, h}$, $|v_h|_{1, \Gamma_h[X_h]}\sim |v_h|_{1, h}$, $\left\|v_h\right\|_{1, \Gamma_h[X_h]}\sim\left\|v_h\right\|_{1, h}$.

Whenever $\vec h_j[X_h]+\vec h_{j+1}[X_h]\ne0$, the nodal tangent, nodal normal, and their unit versions are
\begin{equation}\label{eq:def of N,T}
 \begin{aligned}
 &\widetilde{T}_h[X_h](\rho_j)=\frac{\vec h_j[X_h]+\vec h_{j+1}[X_h]}{|\vec h_j[X_h]|+|\vec h_{j+1}[X_h]|}, \quad \widetilde{N}_h[X_h](\rho_j)=R\widetilde{T}_h[X_h](\rho_j),\\
 & T_h[X_h](\rho_j) = \frac{\widetilde{T}_h[X_h](\rho_j)}{|\widetilde{T}_h[X_h](\rho_j)|},    \quad N_h[X_h](\rho_j) = R T_h[X_h](\rho_j).
 \end{aligned}
\end{equation}
At each node, the normal component and the tangential component are
\begin{equation}
 \alpha_h=\Ih[v_h\cdot N_h[X_h]],\qquad \beta_h=\Ih[v_h\cdot T_h[X_h]],\qquad \alpha_h,\beta_h\in S_h.
\end{equation}
Define the finite element operators $\pn {X_h} = N_h[X_h]N_h[X_h]^{\mathsf T},\ \pt {X_h}=\mathrm I_2-\pn {X_h}:[S_h]^2\longrightarrow [S_h]^2$, they satisfy:
\begin{equation*}
 \begin{gathered}
 (\pn {X_h})^2=\pn {X_h},\qquad (\pt {X_h})^2=\pt {X_h},\\
 \pn {X_h}\pt {X_h}=\pt {X_h}\pn {X_h}=0,
 \end{gathered}
\end{equation*}
\begin{equation*}
 \pn {X_h}v_h=\Ih[\alpha_hN_h[X_h]],\quad \pt {X_h}v_h=\Ih[\beta_hT_h[X_h]].
\end{equation*}
\begin{equation*}
 v_h=\pn {X_h}v_h+\pt {X_h}v_h, \quad \norm{v_h}{0,h}^2=\norm{\pn {X_h}v_h}{0,h}^2+ \norm{\pt {X_h}v_h}{0,h}^2.
\end{equation*}

Given $X_0: [0, 1] \to \mathbb{R}^2$, the original BGN method is given as follows: let $X_h^0 = I_h(X_0)$ and $\Gamma^0 = X_h^0(I)$. For any $m = 0, 1, \ldots$, find $X_h^{m+1} \in [S_h]^2, \kappa_h^{m+1} \in S_h$, such that for any $\chi_h\in S_h, \phi_h\in [S_h]^2$
\begin{equation*}
 \left(\frac{X_h^{m+1} - X_h^m}{\tau}\cdot \vec n_h[X_h^m],\chi_h\right)^h_{\Gamma[X_h^m]} +\bigl(\kappa_h^{m+1},\chi_h\bigr)^h_{\Gamma[X_h^m]}=0,
\end{equation*}
\begin{equation*}
 \bigl(\kappa_h^{m+1}\vec n_h[X_h^m],\phi_h\bigr)^h_{\Gamma[X_h^m]} -\stiff{X_h^m}{X_h^{m+1}}{\phi_h}=0.
\end{equation*}
For positive edge lengths and averaged normals spanning $\R^2$, the BGN method is uniquely solvable; see \cite{BGN2007b,BGN2008,BGN2020}.

Eliminating curvature, and denote $V_h^{m+1} = \frac{X_h^{m+1} - X_h^m}{\tau}$ gives the equivalent BGN formulation:
\begin{equation}\label{eq:BGN formulation}
 \begin{aligned}
 &\left(|\widetilde{N}_h[X_h^m]|^2 \pn {X_h^m} V_h^{m+1} ,\pn {X_h^m} \phi_h\right)^h_{\Gamma[X_h^m]}\\
 & \quad+\stiff{X_h^m}{X_h^{m}}{\phi_h} + \tau\stiff{X_h^m}{V_h^{m+1}}{\phi_h}=0.
 \end{aligned}
\end{equation}

\subsection{Main theorem}
\begin{assumption}[Regularity of the exact solution]\label{ass:regular}
For regular initial curve parameterized by $X_0:I\to\R^2$, the solution for $X(t)$ is in $C^3([0,T]; C^{6}(I))$, with
\begin{equation*}
 \inf_{I\times[0,T]}|X_\rho|>0.
\end{equation*}
\end{assumption}

For simple closed curves $\Gamma_1,\Gamma_2$ enclosing $\Omega_1,\Omega_2$, define the manifold distance
\begin{equation}
 \Md(\Gamma_1,\Gamma_2)=|\Omega_1\mathbin\triangle\Omega_2|.
\end{equation}
This symmetric-difference-area convention for manifold distance is used, for example, in \cite{JiangSuZhang2024}.

\begin{theorem}[Convergence for $\tau=h^2$]\label{thm:main}
Under Assumption~\ref{ass:regular} and Assumption~\ref{ass:existence}, there are $h_0>0$ and $C>0$, independent of $h$, such that for any $h\leq h_0$
\begin{equation}\label{eq:main result}
 \max_m\Md(\Gamma[X_h^m],\Gamma(t_m))\leq Ch^2, \quad m\tau \leq T.
\end{equation}
where $X_h^m$ is the numerical solution of the BGN formulation \eqref{eq:BGN formulation} at time $t_m = m\tau$, with $\tau=h^2$.
\end{theorem}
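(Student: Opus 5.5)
The argument has two parts: a backward-error-analysis construction that produces an approximation flow $Y^{[2]}$ with a small BGN defect, and an ESFEM-type stability argument comparing $X_h^m$ with $\Ih Y^{[2]}(t_m)$ in $H^1$.

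\emph{Step 1: the approximation flow.} Insert the interpolant $\Ih Y(t_m)$ of a smooth flow $Y$ into \eqref{eq:BGN formulation} and expand the defect in $h$, using Taylor expansion on the uniform grid, and in $\tau=h^2$, using the difference quotient. This produces velocity fields $V^{[0]}$, $V^{[1]}=0$ and $V^{[2]}$, chosen so that the defect cancels order by order.
\begin{itemize}
\item The normal part of $V^{[0]}$ is $-\kappa\vec n$.
\item Its tangential part solves an elliptic equation. This equation comes from the stiffness term $\tau(\partial_s V,\partial_s\phi)$ balancing the $O(h^2)$ tangential error of the discrete Laplacian of $\Ih Y$, and it is here that $\tau\sim h^2$ enters.
\item Consequently $Y^{[0]}$ is a tangential reparametrization of $X$, so $\Gamma[Y^{[0]}(t)]=\Gamma(t)$.
\item The flow $Y^{[2]}$ with $Y_t=V^{[0]}+h^2V^{[2]}$ then has a defect of size $O(h^4)$ in the dual of $H^1$, with tangential component $O(h^6)$.
\end{itemize}
Assumption~\ref{ass:regular} ($C^6$ in space, $C^3$ in time) supplies exactly the derivatives these expansions need. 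Assumption~\ref{ass:existence} guarantees that the modified equations have smooth solutions on $[0,T]$ for all small $h$.

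\emph{Step 2: stability in $H^1$.} Set $E^m=X_h^m-\Ih Y^{[2]}(t_m)$. Subtract the defect equation from \eqref{eq:BGN formulation} and test with $\phi_h=\dt E^{m+1}$, or with $E^{m+1}$. Split every vector into its $\pn{\cdot}$ and $\pt{\cdot}$ parts using the normal--tangential estimates of Section~\ref{sec:tangent}.
\begin{itemize}
\item The normal part is controlled by the mass term $\bigl(|\widetilde N_h|^2\pn{X_h^m}V,\pn{X_h^m}\phi\bigr)^h$.
\item The tangential part is controlled only by $\tau|\cdot|_{1}^2=h^2|\cdot|_1^2$. This is why the tangential defect must be as small as $h^6$: after dividing by $\tau$ it still contributes $O(h^4)$.
\item The difference between the stiffness forms on $\Gamma[X_h^m]$ and on $\Gamma[\Ih Y^{[2]}]$ is bounded by the geometric perturbation estimates of Section~\ref{sec:tangent}, with constants depending on $\norm{E^m}{W^{1,\infty}}$.
\end{itemize}
A discrete Gronwall argument then gives $\max_m\norm{E^m}{1,h}\le Ch^4$.

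The main obstacle is closing the bootstrap: the perturbation estimates hold only while $\norm{E^m}{W^{1,\infty}}$ is small. The inverse inequality gives $\norm{E^m}{W^{1,\infty}}\le Ch^{-1/2}\norm{E^m}{1,h}\le Ch^{7/2}$, which is small. The delicate point is that the tangential error estimate is weaker by a factor of $\tau$, so the inverse estimate and the defect orders must be balanced, which is precisely what the $h^6$ tangential defect is designed to do. I would run the induction with the hypothesis $\norm{E^m}{1,h}\le h^{3}$ and show that it is improved to $Ch^4$ at each step.

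\emph{Step 3: manifold distance.} Since $\norm{E^m}{L^\infty}\le C\norm{E^m}{1,h}\le Ch^4$, the polygons $\Gamma[X_h^m]$ and $\Gamma[\Ih Y^{[2]}(t_m)]$ are $O(h^4)$ apart pointwise, so their symmetric difference has area $O(h^4)$. The flows $Y^{[2]}$ and $Y^{[0]}$ differ by $O(h^2)$ over $[0,T]$, since their velocities differ by $h^2V^{[2]}$ and the dependence on the data is Lipschitz. The polygon $\Gamma[\Ih Y^{[0]}]$ differs from $\Gamma(t_m)=\Gamma[Y^{[0]}]$ by the interpolation error $O(h^2)$. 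Adding these bounds via the triangle inequality for $\Md$ gives \eqref{eq:main result}.
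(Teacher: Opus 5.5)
Your proposal follows the paper's proof. It has the same three ingredients: the backward-error construction of $Y^{[0]}$ (a tangential reparametrization of $X$) and $Y^{[2]}$ with defects $O(h^4)$ and tangential part $O(h^6)$; the normal--tangential discrete energy estimate, closed by an inverse-inequality induction, giving $\max_m\norm{X_h^m-\Ih Y^{[2]}(t_m)}{1,h}\le Ch^4$; and an area bound linear in the parametrization error. The only difference is that you apply the triangle inequality to $\Md$ itself, whereas the paper first assembles $\norm{X_h^m-Y^{[0]}(t_m)}{L^2(I)}\le Ch^2$ and then applies Lemma~\ref{lem:area} once; that lemma's straight-line homotopy and winding-number argument is also what justifies your ``pointwise close implies small area''.

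Three points you pass over quickly carry real content:
\begin{itemize}
\item The normal defect must be $O(h^4)$ against $\norm{\phi_h}{0,h}$, as Theorem~\ref{thm:comparison-flow-defects} gives, not merely in the dual of $H^1$ as you write. The normal part of $\dt E^{m+1}$ is controlled in $H^1$ only with weight $\tau$, so an $H^1$-dual bound would lose a factor $h$ and break your induction.
\item The tangential estimate survives division by $\tau$ only because the tangential stiffness perturbation carries an extra factor $h^2$ (Lemma~\ref{lem:force}). Constants depending on $\norm{E^m}{W^{1,\infty}}$ alone would not suffice.
\item The $O(h^2)$ closeness of $Y^{[2]}$ and $Y^{[0]}$ is not a soft Lipschitz fact, because $V_2$ contains fourth derivatives of $Y$. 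It is a separate energy estimate (Lemma~\ref{lem:continuous-stability}) that treats $h^2V_2[Y^{[2]}]$ as a forcing bounded by Assumption~\ref{ass:existence}.
\end{itemize}
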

\section{Normal--tangential estimates}\label{sec:tangent}
Let $X\in C^3(I)$ be a regular embedded parametrization and
\begin{equation}
 X_h=\Ih X,\qquad \inf_I|X_\rho|>0.
\end{equation}
Here $I_j=(\rho_{j-1},\rho_j)$ and $\vec\tau_h[X_h]|_{I_j}=\vec\tau_j[X_h]$, whereas $T_h[X_h],N_h[X_h]\in [S_h]^2$. Iterated differences refer to nodal values:
\begin{equation*}
 (D_h)^k v_{h,j}=h^{-k}\sum_{\ell=0}^k(-1)^\ell\binom{k}{\ell} v_h(\rho_{j-\ell}).
\end{equation*}
Differences of edge quantities are written explicitly. Constants are independent of $h$.

\subsection{Geometry of the interpolated curve}

\begin{lemma}\label{lem:referencegeometry}
For sufficiently small $h$, $\Gamma_h[X_h]$ is regular and embedded, and
\begin{equation}
 c\leq |D_hX_{h,j}|\leq C,\qquad |(D_h)^2X_{h,j}|+|(D_h)^3X_{h,j}|\leq C,
\end{equation}
\begin{equation}
 |\widetilde T_h[X_h](\rho_j)|\geq c,\qquad |D_hT_h[X_h]_j|+|(D_h)^2T_h[X_h]_j|\leq C.
\end{equation}
The edge quantities satisfy
\begin{equation}
 \begin{aligned}
 |\vec\tau_{j+1}[X_h]-\vec\tau_j[X_h]|&\leq Ch, \quad |\vec\tau_{j+1}[X_h]-2\vec\tau_j[X_h]+\vec\tau_{j-1}[X_h]|\leq Ch^2,\\
 \big||\vec h_{j+1}[X_h]|-|\vec h_j[X_h]|\big|&\leq Ch^2, \quad \big||\vec h_{j+1}[X_h]|-2|\vec h_j[X_h]|+|\vec h_{j-1}[X_h]|\big| \leq Ch^3.
 \end{aligned}
\end{equation}
Moreover,
\begin{equation}
 |\overline{T_h[X_h]}_j|\geq c,\qquad |\overline{T_h[X_h]}_j-\vec\tau_j[X_h]|\leq Ch,\qquad h\sum_j|D_hT_h[X_h]_j|^2\geq c.
\end{equation}
The corresponding frame bounds hold for $N_h[X_h]$ and $\vec n_h[X_h]$.
\end{lemma}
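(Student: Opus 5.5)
The whole lemma follows from Taylor expansion of the smooth parametrization $X\in C^3(I)$ around nodes and edge midpoints, together with the lower bound $\inf_I|X_\rho|=:c_0>0$. The plan is to treat the finite differences first, then the edge quantities, then the nodal frame, and to close with embeddedness and the global lower bound.

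\emph{Differences of $X_h$.} Since $X_h$ interpolates $X$ at the nodes, $D_hX_{h,j}=h^{-1}\int_{I_j}X_\rho\,\dd\rho$. This gives $|D_hX_{h,j}|\le\|X_\rho\|_\infty$. It also gives $|D_hX_{h,j}-X_\rho(\rho_{j-1/2})|\le Ch^2$ with $\rho_{j-1/2}$ the edge midpoint, so $|D_hX_{h,j}|\ge c_0/2$ for small $h$. The iterated differences $(D_h)^kX_{h,j}$ for $k=2,3$ are averages of $\partial_\rho^kX$ over B-spline kernels, hence bounded by $\|X\|_{C^3}$.

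\emph{Edge quantities.} Write $\vec h_j=hD_hX_{h,j}$. The map $v\mapsto v/|v|$ is smooth on $\{|v|\ge c_0/2\}$, with bounded first and second derivatives. Hence
\begin{equation*}
|\vec\tau_{j+1}-\vec\tau_j|\le C|D_hX_{j+1}-D_hX_j|=Ch|(D_h)^2X_{j+1}|\le Ch.
\end{equation*}
For the second difference, use the standard estimate for a smooth function $F$ composed with a grid sequence $a_j$. The second difference of $F(a_j)$ is bounded by $\|DF\|\,|a_{j+1}-2a_j+a_{j-1}|+\|D^2F\|\max|a_{j+1}-a_j|^2$, which here is $O(h^2)+O(h^2)$. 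The same argument with $F=|\cdot|$ applied to $\vec h_j=h\,D_hX_j$ gives the $Ch^2$ and $Ch^3$ bounds, because of the extra factor $h$.

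\emph{Nodal frame.} We have $\widetilde T_h(\rho_j)=(\vec h_j+\vec h_{j+1})/(|\vec h_j|+|\vec h_{j+1}|)$, which is a weighted average of $\vec\tau_j$ and $\vec\tau_{j+1}$. These differ by $O(h)$, so $|\widetilde T_h|\ge 1-Ch\ge c$. Then $T_h(\rho_j)=G(\vec h_j/h,\vec h_{j+1}/h)$ for a fixed smooth $G$ on a compact region where $|\widetilde T|\ge c$. The bounds $|D_hT_h|+|(D_h)^2T_h|\le C$ then follow from the composition rule above, using the bounds on $D_h^kX$ for $k\le3$.

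It also follows that $|T_h(\rho_j)-\vec\tau_j|\le Ch$. Hence $\overline{T_h}_j=\vec\tau_j+O(h)$ and $|\overline{T_h}_j|\ge c$. For the global lower bound, note that $T_h(\rho_j)=\vec\tau(\rho_j)+O(h)$ and $D_hT_{h,j}=\partial_\rho\vec\tau(\rho_{j-1/2})+O(h)$ by Taylor expansion. Therefore $h\sum_j|D_hT_{h,j}|^2\to\int_I|\kappa|^2|X_\rho|^2\dd\rho$. This limit is positive because a closed curve cannot have $\kappa\equiv0$: its total turning is $2\pi$. This yields the bound $\ge c$ for small $h$. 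The statements for $N_h$ and $\vec n_h$ follow by applying the isometry $R$.

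\emph{Regularity and embeddedness.} This is the main obstacle. Regularity is immediate from $|\vec h_j|\ge c h$. For embeddedness, use that $X$ is an embedding with $|X_\rho|\ge c_0$, which gives a bi-Lipschitz bound $|X(\rho)-X(\sigma)|\ge c\,\mathrm{dist}_I(\rho,\sigma)$. The interpolant satisfies $\|X_h-X\|_{L^\infty}\le Ch^2$.

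\begin{itemize}
\item For parameters with $\mathrm{dist}_I(\rho,\sigma)\ge \delta$, with $\delta$ fixed small, this gives $|X_h(\rho)-X_h(\sigma)|\ge c\delta-Ch^2>0$.
\item For $\mathrm{dist}_I(\rho,\sigma)<\delta$, a local argument is needed. All edge tangents along the short arc lie within angle $C\delta$ of one another. Then $(X_h(\rho)-X_h(\sigma))\cdot\vec\tau_j\ge c|\rho-\sigma|$, so the points are distinct.
\end{itemize}

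Combining the two cases gives injectivity of $X_h$ on $I$, so $\Gamma_h[X_h]$ is embedded.
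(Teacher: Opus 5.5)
Your proposal is correct and follows essentially the same route as the paper. You use B-spline averages of $\partial_\rho^kX$ for the differences and smooth normalization maps for the edge and nodal frame quantities. You then take the Riemann-sum limit $h\sum_j|D_hT_h[X_h]_j|^2\to\int_I|\partial_\rho\vec\tau|^2\,\mathrm d\rho$, and finish with a perturbation argument for embeddedness (separation of distant arcs plus local monotonicity along a tangent), which you spell out in more detail than the paper does.

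The one minor difference is how you show $\int_I|\partial_\rho\vec\tau|^2\,\mathrm d\rho>0$. You use the turning number, which is valid here because $X$ is simple. The paper instead uses the more elementary observation that a constant tangent would contradict $\int_I X_\rho\,\mathrm d\rho=0$.
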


\begin{proof}
For $k=1,2,3$,
\begin{equation*}
 (D_h)^kX_{h,j} =h^{-k}\int_{[0,h]^k}\partial_\rho^kX(\rho_j-s_1-\cdots-s_k)\,\mathrm ds_1\cdots\mathrm ds_k.
\end{equation*}
Normalization of the following averages gives the difference bounds:
\begin{equation*}
 \vec\tau_j[X_h]=\frac{D_hX_{h,j}}{|D_hX_{h,j}|},\qquad T_h[X_h](\rho_j)= \frac{D_hX_{h,j}+D_hX_{h,j+1}} {|D_hX_{h,j}+D_hX_{h,j+1}|}.
\end{equation*}
Furthermore,
\begin{equation*}
 \max_j|T_h[X_h](\rho_j)-\vec\tau(\rho_j)|\leq Ch^2,\qquad \|\partial_\rho T_h[X_h]-\partial_\rho\vec\tau\|_{L^\infty(I)} \leq Ch,
\end{equation*}
\begin{equation*}
 h\sum_j|D_hT_h[X_h]_j|^2 \longrightarrow\int_I|\partial_\rho\vec\tau|^2\,\mathrm d\rho>0,
\end{equation*}
since
\begin{equation*}
 \partial_\rho\vec\tau\equiv0 \quad\Longrightarrow\quad 0=\int_I X_\rho\,\mathrm d\rho =\vec\tau(0)\int_I|X_\rho|\,\mathrm d\rho\ne0.
\end{equation*}
Finally, $\|X_h-X\|_{W^{1,\infty}(I)}\leq Ch$ preserves the local graph charts and the separation of disjoint parameter arcs of $X$.
\end{proof}

\subsection{Normal and tangential estimates on one polygon}

\begin{lemma}[Tangential Poincar\'e inequality]\label{lem:tancontrol}
For $\alpha_h,\beta_h\in S_h$,
\begin{equation}
 \norm{\Ih[\alpha_hN_h[X_h]]}{1,h}\sim\norm{\alpha_h}{1,h},\qquad \norm{\Ih[\beta_hT_h[X_h]]}{1,h}\sim\norm{\beta_h}{1,h}.
\end{equation}
For $v_h,w_h\in [S_h]^2$,
\begin{equation}\label{eq:tangential poincare inequality}
 \norm{\pt{X_h}v_h}{1,h}\leq C|\pt{X_h}v_h|_{1,h},
\end{equation}
\begin{equation}\label{eq:tan vs normal}
 \begin{aligned}
 &\bigl|\stiff{X_h}{\pn{X_h}v_h}{\pt{X_h}w_h}\bigr|\\
 &\qquad\leq C\norm{\pn{X_h}v_h}{0,h}\norm{\pt{X_h}w_h}{1,h}.
 \end{aligned}
\end{equation}
\end{lemma}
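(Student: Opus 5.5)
The proof rests on the discrete product rule and on the frame bounds of Lemma~\ref{lem:referencegeometry}. We take the three claims in turn.

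\emph{Norm equivalences.} Set $v_h=\Ih[\alpha_hN_h[X_h]]$. Since $|N_h[X_h](\rho_j)|=1$, we have $\norm{v_h}{0,h}=\norm{\alpha_h}{0,h}$. The product rule gives
\begin{equation*}
D_hv_j=\overline{\alpha}_jD_hN_{h,j}+(D_h\alpha_j)\overline{N_h}_j .
\end{equation*}
By Lemma~\ref{lem:referencegeometry}, $|D_hN_{h,j}|\le C$, so $|v_h|_{1,h}\le C\norm{\alpha_h}{1,h}$. For the reverse bound, observe that $\alpha_j=v_j\cdot N_{h,j}$. Applying the product rule again gives $D_h\alpha_j=D_hv_j\cdot\overline{N_h}_j+\overline v_j\cdot D_hN_{h,j}$, which yields $|\alpha_h|_{1,h}\le C\norm{v_h}{1,h}$. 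The tangential case is identical with $T_h[X_h]$ in place of $N_h[X_h]$.

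\emph{Tangential Poincar\'e inequality.} By the first part, it suffices to prove $\norm{\beta_h}{0,h}\le C|\Ih[\beta_hT_h]|_{1,h}$. We compute
\begin{equation*}
D_h(\beta T_h)_j=\overline\beta_jD_hT_{h,j}+(D_h\beta_j)\overline{T_h}_j .
\end{equation*}
Both $D_hT_{h,j}$ and $D_h N_{h,j}$ are $O(1)$ and, to leading order, normal, since $D_hT_{h,j}\approx -\kappa|X_\rho|\vec n$. Taking the inner product of $D_h(\beta T_h)_j$ with $\overline{N_h}_j$ gives approximately $\overline\beta_j\,D_hT_{h,j}\cdot\overline{N_h}_j$. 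This term is not bounded below pointwise, since $\kappa$ may vanish. I would therefore argue by contradiction and compactness, which I expect to be the main obstacle.

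Suppose there are sequences $h\to0$ and $\beta_h$ with $\norm{\beta_h}{0,h}=1$ and $|\Ih[\beta_hT_h]|_{1,h}\to0$. Then $|\beta_h|_{1,h}$ is bounded, so a subsequence converges in $L^2$ and weakly in $H^1$ to some $\beta\in H^1(I)$ with $\norm\beta0=1$. Because $T_h\to\vec\tau$ in $W^{1,\infty}$ (Lemma~\ref{lem:referencegeometry}), we get $\Ih[\beta_hT_h]\to\beta\vec\tau$ in $L^2$. Weak lower semicontinuity then gives $\partial_\rho(\beta\vec\tau)=0$, so $\beta\vec\tau$ is a constant vector $c$. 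If $c\neq0$, then $\vec\tau$ is constant wherever $\beta\ne0$; since $\beta$ is continuous and $|\beta|=|c|\neq 0$ everywhere, $\vec\tau$ is constant on all of $I$. This contradicts the closedness argument already used in Lemma~\ref{lem:referencegeometry}, namely $\int_I X_\rho\,\dd\rho=0$. Hence $c=0$, so $\beta=0$, which contradicts $\norm{\beta}0=1$. The uniformity in $h$ is exactly what this compactness argument supplies.

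\emph{Normal--tangential coupling.} Write $\pn{X_h}v_h=\Ih[\alpha N_h]$ and $\pt{X_h}w_h=\Ih[\beta T_h]$. On edge $j$ the arclength derivatives are $h D_h(\cdot)_j/|\vec h_j|$, so the stiffness form equals
\begin{equation*}
\sum_j \frac{h^2}{|\vec h_j|}\bigl(\overline\alpha_jD_hN_{h,j}+D_h\alpha_j\,\overline{N_h}_j\bigr)\cdot\bigl(\overline\beta_jD_hT_{h,j}+D_h\beta_j\,\overline{T_h}_j\bigr).
\end{equation*}
Terms containing $\overline\alpha_j$ are bounded by $C\norm{\alpha}{0,h}\norm{\beta}{1,h}$ directly. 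The dangerous term contains $D_h\alpha_j$. The pure product $D_h\alpha_j\,D_h\beta_j\,\overline{N_h}_j\cdot\overline{T_h}_j$ is harmless, because $\overline{N_h}_j\cdot\overline{T_h}_j=O(h^2)$ by the symmetric averaging, and $h^2|D_h\alpha_j|\le 2h|\overline{|\alpha|}_j|$-type bounds convert it to $C\norm\alpha{0,h}\norm{\beta}{1,h}$.

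The remaining term is $\sum_j \frac{h^2}{|\vec h_j|}D_h\alpha_j\,\overline\beta_j\,\overline{N_h}_j\cdot D_hT_{h,j}$. I would handle it by summation by parts, moving $D_h$ from $\alpha$ onto $\overline\beta_j\,\overline{N_h}_j\cdot D_hT_{h,j}/|\vec h_j|$. The difference of this coefficient is bounded using $|(D_h)^2T_h|\le C$ together with the second-difference bounds on edge lengths from Lemma~\ref{lem:referencegeometry}, giving $C\norm\alpha{0,h}\norm\beta{1,h}$. Finally, converting $\norm{\beta}{1,h}$ to $\norm{\pt{X_h}w_h}{1,h}$ via the first part completes the argument.
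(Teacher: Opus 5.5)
Your proposal is correct. The norm equivalences and the coupling estimate are essentially the paper's argument, but the Poincar\'e step takes a genuinely different route.

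\emph{The paper's Poincar\'e argument} is direct. The exact identity $\overline{T_h}_j\cdot D_hT_{h,j}=0$ gives the Pythagorean formula $|D_h\Ih[\beta_hT_h]_j|^2=|D_h\beta_{h,j}|^2|\overline{T_h}_j|^2+|D_hT_{h,j}|^2|\overline\beta_{h,j}|^2$. The mean value left over by Poincar\'e--Wirtinger is then absorbed in two moves: multiply $|\int_I\beta_h\,\dd\rho|^2$ by $h\sum_j|D_hT_{h,j}|^2\ge c$, and bound $|\int_I\beta_h\,\dd\rho|$ by $|\overline\beta_{h,j}|+C|\beta_h|_{1,h}$.

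\emph{Your contradiction/compactness argument} is also valid. The limit does satisfy $\norm{\beta}{0}=1$, because $\norm{\beta_h}{0}^2=\norm{\beta_h}{0,h}^2-\tfrac{h^2}{6}|\beta_h|_{1,h}^2$; alternatively, norm equivalence gives $\norm{\beta}{0}\ge c$, which suffices. Both routes rest on the same fact, that a closed curve cannot have constant tangent. The paper encodes this quantitatively as $h\sum_j|D_hT_{h,j}|^2\ge c$, while your argument isolates it very transparently as the only possible kernel.

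\emph{What each route buys.} The paper's constant is explicit and depends only on $\inf_j|\overline{T_h}_j|$, $\sup_j|D_hT_{h,j}|$ and the lower bound on $h\sum_j|D_hT_{h,j}|^2$. It is therefore uniform over any family of polygons satisfying these bounds. This matters because the lemma is invoked in Step~2 of the proof of Theorem~\ref{thm:main2} at the polygons $X_h^m$ for every $m$, and these are only close to interpolated smooth curves. Your constant depends on $X$ non-explicitly. For that later use you would have to rerun the contradiction along sequences of curves as well. This works under uniform $C^3$ and $\inf|X_\rho|$ bounds via Arzel\`a--Ascoli, since a $C^1$-limit of closed regular curves still has nonconstant tangent.

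\emph{Coupling estimate.} The paper kills the diagonal terms with the nodal orthogonality $N_h[X_h](\rho_j)\cdot T_h[X_h](\rho_j)=0$ and regroups the off-diagonal terms around $\alpha_{h,j}$; this is your summation by parts done implicitly. Two small corrections:
\begin{itemize}
\item $\overline{N_h}_j\cdot\overline{T_h}_j$ is in fact exactly zero, because $N_h=RT_h$ with $R$ antisymmetric. Your $D_h\alpha_j\,D_h\beta_j$ term therefore vanishes identically, and the inverse-estimate bound there is unnecessary.
\item The summation by parts needs the first-difference bound $\bigl||\vec h_{j+1}|-|\vec h_j|\bigr|\le Ch^2$ together with $|(D_h)^2T_h|\le C$, not the second-difference edge-length bound.
\end{itemize}
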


\begin{proof}
For $\beta_h\in S_h$, the nodal product formula gives
\begin{equation*}
 \norm{\Ih[\beta_hT_h[X_h]]}{0,h}=\norm{\beta_h}{0,h}
\end{equation*}
\begin{equation*}
 \overline{T_h[X_h]}_j\cdot D_hT_h[X_h]_j=\frac{1}{2h}\left(|T_h[X_h](\rho_j)|^2 - |T_h[X_h](\rho_{j-1})|^2\right) = 0,
\end{equation*}
\begin{equation*}
 \begin{aligned}
 |D_h\Ih[\beta_hT_h[X_h]]_j|^2 ={}&|D_h\beta_{h,j}|^2|\overline{T_h[X_h]}_j|^2+|D_hT_h[X_h]_j|^2|\overline\beta_{h,j}|^2,
 \end{aligned}
\end{equation*}
Thus $\norm{\Ih[\beta_hT_h[X_h]]}{1,h}\sim\norm{\beta_h}{1,h}$. It also applies for $N_h[X_h]$.

Poincar\'e and Lemma~\ref{lem:referencegeometry} yield
\begin{equation}
 \begin{aligned}
 \norm{\beta_h}{0,h}^2 & \leq C \left(|\beta_h|_{1, h}^2 +\left|\int_I\beta_h\,\mathrm d\rho\right|^2 \right) \\
 &\leq C |\beta_h|_{1, h}^2 + C h\sum_j|D_hT_h[X_h]_j|^2 \left|\int_I\beta_h\,\mathrm d\rho\right|^2\\
 &  \leq C |\beta_h|_{1, h}^2 + \left( C h \sum_j |D_hT_h[X_h]_j|^2 |\overline{\beta}_{h, j}|^2 + C h \sum_j |D_hT_h[X_h]_j|^2 |\beta_h|_{1, h}^2\right) \\
 &\leq C|\beta_h|_{1,h}^2 + Ch\sum_j|D_hT_h[X_h]_j|^2|\overline\beta_{h,j}|^2 \leq C|\Ih[\beta_hT_h[X_h]]|_{1,h}^2.
 \end{aligned}
\end{equation}
Take $\beta_h=\Ih[v_h\cdot T_h[X_h]]$, we then obtain tangential Poincar\'e.

For \eqref{eq:tan vs normal}, take
\begin{equation*}
 \alpha_h=\Ih[v_h\cdot N_h[X_h]],\qquad \beta_h=\Ih[w_h\cdot T_h[X_h]].
\end{equation*}
Since $N_h[X_h]_j\cdot T_h[X_h]_j=0$, we obtain
\begin{equation*}
 \begin{aligned}
 &\stiff{X_h}{\pn{X_h}v_h}{\pt{X_h}w_h}\\
 & = \sum_j \alpha_{h, j} N_h[X_h]_j \cdot \left(-\beta_{h, j-1} \frac{T_h[X_h]_{j-1}}{|\vec{h}_j[X_h]|}-\beta_{h, j+1} \frac{T_h[X_h]_{j+1}}{|\vec{h}_{j+1}[X_h]|}\right) \\
 & = \sum_j \alpha_{h, j} N_h[X_h]_j \cdot \biggl(\left(\beta_{h, j} - h D_h\beta_{h, j} \right) \frac{hD_hT_h[X_h]_{j}}{|\vec{h}_j[X_h]|}\\
 & \qquad\qquad -\left(\beta_{h, j} + h D_h \beta_{h, j+1}\right) \frac{hD_hT_h[X_h]_{j+1}}{|\vec{h}_{j+1}[X_h]|}\biggr) \\
 &\leq C\norm{\alpha_h}{0,h}\norm{\beta_h}{1,h} \leq C\norm{\pn{X_h}v_h}{0,h}\norm{\pt{X_h}w_h}{1,h}.
 \end{aligned}
\end{equation*}
\end{proof}

\begin{lemma}[Norm equivalence]\label{lem:norm}
For $v_h\in [S_h]^2$,
\begin{equation}
 \norm{v_h}{1,h}^2\sim \norm{\pn{X_h}v_h}{1,h}^2+\norm{\pt{X_h}v_h}{1,h}^2,
\end{equation}
\begin{equation}
 \norm{v_h}{1,h}^2\sim \norm{\partial_{s[X_h]}v_h\cdot\vec n_h[X_h]}{L^2(\Gamma_h[X_h])}^2 +\norm{\pt{X_h}v_h}{1,h}^2+\norm{v_h}{0,h}^2.
\end{equation}
\end{lemma}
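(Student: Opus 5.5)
For the first equivalence I will use the orthogonal splitting $v_h=\pn{X_h}v_h+\pt{X_h}v_h$. The upper bound $\norm{v_h}{1,h}\le\norm{\pn{X_h}v_h}{1,h}+\norm{\pt{X_h}v_h}{1,h}$ is just the triangle inequality. For the reverse, I write $\alpha_h=\Ih[v_h\cdot N_h[X_h]]$ and $\beta_h=\Ih[v_h\cdot T_h[X_h]]$. The nodal product formula gives
\[
D_h\alpha_{h,j}=\overline{v_h}_j\cdot D_hN_h[X_h]_j+D_hv_{h,j}\cdot\overline{N_h[X_h]}_j ,
\]
and the analogous formula for $\beta_h$. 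By Lemma~\ref{lem:referencegeometry}, $|D_hN_h[X_h]_j|\le C$ and $|\overline{N_h[X_h]}_j|\le1$, so $\norm{\alpha_h}{1,h}+\norm{\beta_h}{1,h}\le C\norm{v_h}{1,h}$. Lemma~\ref{lem:tancontrol} then gives
\[
\norm{\pn{X_h}v_h}{1,h}\sim\norm{\alpha_h}{1,h},\qquad \norm{\pt{X_h}v_h}{1,h}\sim\norm{\beta_h}{1,h},
\]
which proves the first claim.

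For the second equivalence, the upper bound for the right-hand side is immediate. By the edge-length bounds $c h\le|\vec h_j[X_h]|\le Ch$, we have $\|\partial_{s[X_h]}v_h\cdot\vec n_h\|_{L^2(\Gamma_h)}\le C|v_h|_{1,h}$. Combined with the first part, the right-hand side is $\le C\norm{v_h}{1,h}^2$.

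The lower bound is the real content, and by the first part it suffices to bound $|\pn{X_h}v_h|_{1,h}$, i.e.\ $|\alpha_h|_{1,h}$. On edge $j$ I decompose $D_h(\pn{X_h}v_h)_j$ into its components along $\vec n_j[X_h]$ and $\vec\tau_j[X_h]$.

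For the normal component I write
\[
D_h\Ih[\alpha_hN_h]_j\cdot\vec n_j = D_h\alpha_{h,j}\,\overline{N_h}_j\cdot\vec n_j+\overline\alpha_{h,j}\,D_hN_{h,j}\cdot\vec n_j .
\]
Lemma~\ref{lem:referencegeometry} gives $|\overline{N_h}_j-\vec n_j|\le Ch$, so $\overline{N_h}_j\cdot\vec n_j\ge c>0$ for small $h$. I also need that $D_hN_{h,j}\cdot\vec n_j$ is small. Since $D_hN_{h,j}\cdot\overline{N_h}_j=0$ and $D_hN_{h,j}=O(1)$, this quantity is $O(h)$. Hence
\[
|D_h\alpha_{h,j}|\le C\,|D_h(\pn{X_h}v_h)_j\cdot\vec n_j|+C|\overline\alpha_{h,j}| .
\]
Next I write $D_h(\pn{X_h}v_h)_j\cdot\vec n_j=D_hv_{h,j}\cdot\vec n_j-D_h(\pt{X_h}v_h)_j\cdot\vec n_j$. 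Note that $\frac{|\vec h_j|}{h}\,\partial_{s}v_h\cdot\vec n_j=D_hv_{h,j}\cdot\vec n_j$. Summing with weight $h$ gives
\[
|\alpha_h|_{1,h}^2\le C\bigl(\|\partial_s v_h\cdot\vec n_h\|^2_{L^2}+|\pt{X_h}v_h|^2_{1,h}+\norm{\alpha_h}{0,h}^2\bigr) .
\]
Finally, $\norm{\alpha_h}{0,h}\le\norm{v_h}{0,h}$, which closes the estimate.

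The main obstacle I expect is the lower-order coupling terms: $D_hN_h\cdot\vec n_j$ and the term involving $\overline\alpha_{h,j}$. These require the frame-proximity estimates $|\overline{N_h}_j-\vec n_j|\le Ch$ from Lemma~\ref{lem:referencegeometry}. They are absorbed only through the $L^2$ term $\norm{v_h}{0,h}$, which is exactly why that term appears in the statement.
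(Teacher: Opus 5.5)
Your proof is correct. It differs from the paper's mainly in how the lower bound in the second equivalence is organized.

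\textbf{The paper's route.} The paper never passes to the scalar coefficient $\alpha_h$. It compares the vectors $D_h(\pn{X_h}v_h)_j$ and $(D_hv_{h,j}\cdot\vec n_j[X_h])\vec n_j[X_h]$ directly. The estimate $|N_h[X_h]_j-\vec n_j[X_h]|+|N_h[X_h]_{j-1}-\vec n_j[X_h]|\le Ch$ cancels the factor $h^{-1}$, so the difference is bounded by $C(|v_{h,j}|+|v_{h,j-1}|)$. This gives a two-sided comparison of $|\pn{X_h}v_h|_{1,h}$ with $\norm{\partial_{s[X_h]}v_h\cdot\vec n_h[X_h]}{L^2(\Gamma_h[X_h])}$, up to $\norm{v_h}{0,h}$. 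It makes no reference to the tangential part, and the second equivalence then follows from the first in one line.

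\textbf{Your route.} You apply the product rule to $\alpha_h$ and project onto $\vec n_j[X_h]$, using $\overline{N_h[X_h]}_j\cdot\vec n_j[X_h]\ge c$. You then replace $D_h(\pn{X_h}v_h)_j$ by $D_hv_{h,j}-D_h(\pt{X_h}v_h)_j$. The resulting cross term is absorbed by $|\pt{X_h}v_h|_{1,h}$, which is harmless because that norm already sits on the right-hand side.

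Two small remarks on your argument:
\begin{itemize}
\item The smallness $D_hN_h[X_h]_j\cdot\vec n_j[X_h]=O(h)$ that you derive is not needed. Boundedness of $D_hN_h[X_h]_j$ suffices, since it only multiplies $\overline\alpha_{h,j}$.
\item The tangential component of $D_h(\pn{X_h}v_h)_j$ that you mention is never used.
\end{itemize}

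\textbf{What each buys.} The paper's comparison isolates the normal part of the $H^1$ seminorm on its own, which is a slightly sharper statement than your bound, since yours needs the tangential norm to close. On the other hand, your treatment of the first equivalence is more complete. The paper simply asserts $\norm{\pn{X_h}v_h}{1,h}+\norm{\pt{X_h}v_h}{1,h}\le C\norm{v_h}{1,h}$, whereas you derive it from the nodal product rule, the frame bounds of Lemma~\ref{lem:referencegeometry}, and Lemma~\ref{lem:tancontrol}.
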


\begin{proof}
$P^N[X_h]+P^{\rm tan}[X_h]=\mathrm I_2$ give
\begin{equation*}
 \|v_h\|_{1,h} \le \|P^N[X_h]v_h\|_{1,h} +\|P^{\rm tan}[X_h]v_h\|_{1,h} \le C\|v_h\|_{1,h}.
\end{equation*}

For the second equivalence, Lemma~\ref{lem:referencegeometry} gives
\begin{equation*}
 |N_h[X_h]_j-\vec n_j[X_h]| + |N_h[X_h]_{j-1}-\vec n_j[X_h]| \le Ch.
\end{equation*}

Taking differences of the nodal projections across $I_j$, we obtain
\begin{equation*}
 \begin{aligned}
 &\left| D_h(P^N[X_h]v_h)_j - \bigl(D_hv_{h,j}\cdot\vec n_j[X_h]\bigr)\vec n_j[X_h] \right| \\
 &\quad\le \frac{2}{h}\Bigl( |N_h[X_h]_j-\vec n_j[X_h]|\,|v_{h,j}| + |N_h[X_h]_{j-1}-\vec n_j[X_h]|\,|v_{h,j-1}| \Bigr) \\
 &\quad\le C\bigl(|v_{h,j}|+|v_{h,j-1}|\bigr).
 \end{aligned}
\end{equation*}

Hence
\begin{equation*}
 \left\| \partial_\rho(P^N[X_h]v_h) - \bigl((v_h)_\rho\cdot\vec n_h[X_h]\bigr)\vec n_h[X_h] \right\|_0 \le C\|v_h\|_{0,h}.
\end{equation*}

Together with the first equivalence and the metric bounds, this yields
\begin{equation*}
 \begin{aligned}
 \|v_h\|_{1,h}^2 &\sim \|P^N[X_h]v_h\|_{1,h}^2 +\|P^{\rm tan}[X_h]v_h\|_{1,h}^2 +\|v_h\|_{0,h}^2 \\
 &\sim \|(v_h)_\rho\cdot\vec n_h[X_h]\|_0^2 +\|P^{\rm tan}[X_h]v_h\|_{1,h}^2 +\|v_h\|_{0,h}^2 \\
 &\sim \|\partial_{s[X_h]}v_h\cdot\vec n_h[X_h]\| _{L^2(\Gamma_h[X_h])}^2 +\|P^{\rm tan}[X_h]v_h\|_{1,h}^2 +\|v_h\|_{0,h}^2.
 \end{aligned}
\end{equation*}

\end{proof}

\subsection{Geometric perturbation}
Throughout this subsection, we assume that
\begin{equation*}
 Y_h\in [S_h]^2,\qquad \norm{Y_h-X_h}{1,h}\leq h^3.
\end{equation*}

\begin{lemma}[Perturbation of geometric quantities]\label{lem:tube} For sufficiently small $h$
\begin{equation}\label{eq:geometry perturbation}
 \begin{aligned}
 &\norm{T_h[Y_h]-T_h[X_h]}{0,h} + \norm{\pt{Y_h}-\pt{X_h}}{0,h} +\norm{\vec\tau_h[Y_h]-\vec\tau_h[X_h]}0\\
 &\qquad \leq C|Y_h-X_h|_{1,h},
 \end{aligned}
\end{equation}
with the same bounds for $N_h, P^N,  \vec{\tau}_h$. For $v_h\in [S_h]^2$,
\begin{equation}
 \begin{aligned}
 &\norm{(\pn{Y_h}-\pn{X_h})v_h}{1,h} +\norm{(\pt{Y_h}-\pt{X_h})v_h}{1,h}\\
 &\qquad\leq C\bigl(h^{5/2}\norm{v_h}{1,h} +h^{3/2}\norm{v_h}{0,h}\bigr).
 \end{aligned}
\end{equation}
\end{lemma}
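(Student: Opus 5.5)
Write $E_h=Y_h-X_h$, so that $\norm{E_h}{1,h}\le h^3$. The plan is to get every estimate from the nodal formulas for $\vec h_j$, $\widetilde T_h$, $T_h$, $N_h$, which are smooth (Lipschitz) functions of $D_hY_{h,j}$ and $D_hY_{h,j+1}$ on a compact set bounded away from degeneracy. The first step is to check that $Y_h$ stays in the regime of Lemma~\ref{lem:referencegeometry}. The inverse inequality gives
\[
\max_j|D_hE_{h,j}|\le h^{-1/2}|E_h|_{1,h}\le h^{5/2}.
\]
Consequently $|\vec h_j[Y_h]|/h$ and $|\widetilde T_h[Y_h](\rho_j)|$ remain bounded above and below, uniformly in $h$.

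For \eqref{eq:geometry perturbation}, the map $(a,b)\mapsto (a+b)/(|a|+|b|)$ and its normalization are Lipschitz on this set. This gives, nodewise,
\[
|T_h[Y_h]_j-T_h[X_h]_j|\le C\bigl(|D_hE_{h,j}|+|D_hE_{h,j+1}|\bigr).
\]
Summing with weight $h$ gives the bound by $C|E_h|_{1,h}$. The same argument works for $\vec\tau_j=D_hY_j/|D_hY_j|$, where the piecewise constant $L^2$ norm equals the discrete one. For the operators, I use the identity
\[
P^N[Y_h]-P^N[X_h]=(N_Y-N_X)N_Y^{\mathsf T}+N_X(N_Y-N_X)^{\mathsf T},
\]
so its nodal size is bounded by $2|N_Y-N_X|$. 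Since $P^{\rm tan}=\mathrm I_2-P^N$, the tangential difference is the same up to sign.

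For the second estimate, put $\Delta_j=P^N[Y_h]_j-P^N[X_h]_j$. The $L^2$ part is immediate:
\[
h\sum_j|\Delta_jv_j|^2\le \max_j|\Delta_j|^2\norm{v_h}{0,h}^2\le C h^5\norm{v_h}{0,h}^2,
\]
which is far better than needed. For the $H^1$ seminorm I use the product formula
\[
D_h(\Delta v)_j=\overline\Delta_jD_hv_j+(D_h\Delta_j)\overline v_j.
\]
The first term is bounded by $\max|\Delta|\,|v_h|_{1,h}\le Ch^{5/2}\norm{v_h}{1,h}$. For the second term I would bound $D_h\Delta_j$ by $h^{-1}(|\Delta_j|+|\Delta_{j-1}|)$, which gives $h^{-1}\max|\Delta|\le Ch^{3/2}$, times $\norm{v_h}{0,h}$.

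The sharper route would be to write $D_h\Delta$ in terms of $(D_h)^2E_h$ using Lipschitz dependence. That needs care, because it requires a uniform bound on $(D_h)^2Y_h$. The crude inverse-inequality route is enough for the stated powers $h^{5/2}$ and $h^{3/2}$, so I will use it.

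The main obstacle is therefore the first step: making sure $\max|D_hE_h|\le Ch^{5/2}$ is small enough that the $Y_h$-geometry stays nondegenerate. Then all the nonlinear maps are uniformly Lipschitz with constants taken from Lemma~\ref{lem:referencegeometry}, and the stated exponents follow with $C$ independent of $h$.
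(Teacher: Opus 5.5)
Your proposal is correct and follows essentially the same route as the paper. The steps match one for one. The inverse inequality $\max_j|D_h(Y_h-X_h)_j|\le h^{-1/2}|Y_h-X_h|_{1,h}\le h^{5/2}$ keeps the $Y_h$-geometry nondegenerate. Nodewise Lipschitz bounds for $T_h$ and $\vec\tau_j$, together with the identity $(N_Y-N_X)N_Y^{\mathsf T}+N_X(N_Y-N_X)^{\mathsf T}$, give the first estimate. The discrete product formula with the crude bound $|D_h\Delta_j|\le h^{-1}(|\Delta_j|+|\Delta_{j-1}|)\le Ch^{3/2}$ gives the second. The only difference is presentational: you invoke Lipschitz continuity of $(a,b)\mapsto(a+b)/|a+b|$, while the paper writes out the numerator/denominator splitting explicitly.
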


\begin{proof}
The inverse inequality gives
\begin{equation*}
 \begin{aligned}
 |\vec h_j[Y_h]-\vec h_j[X_h]|=h|D_h(Y_h-X_h)_j|\le h^{1/2}|Y_h-X_h|_{1,h}\le h^{7/2},
 \end{aligned}
\end{equation*}
Thus for sufficiently small $h$, $|\vec h_j[Y_h]|\ge ch-h^{7/2}>0, \, |\vec h_j[Y_h]+\vec h_{j+1}[Y_h]|>0$, and $\vec{\tau}_h[Y_h], T_h[Y_h]$ are well-defined.

For $T_h$, we write it as a combination of $D_h(Y_h - X_h)$
\begin{equation*}
 \begin{aligned}
 &|T_h[Y_h](\rho_j)-T_h[X_h](\rho_j)| \leq \frac{|\bigl(\vec h_j[Y_h]+\vec h_{j+1}[Y_h]\bigr) -\bigl(\vec h_j[X_h]+\vec h_{j+1}[X_h]\bigr)|}{|\vec h_j[X_h]+\vec h_{j+1}[X_h]|} \\
 & \qquad + \frac{|\Bigl( |\vec h_j[X_h]+\vec h_{j+1}[X_h]| -|\vec h_j[Y_h]+\vec h_{j+1}[Y_h]| \Bigr)T_h[Y_h](\rho_j)|}{|\vec h_j[X_h]+\vec h_{j+1}[X_h]|} \\
 & \leq \frac{h(1+|T_h[Y_h](\rho_j)|)}{|\vec h_j[X_h]+\vec h_{j+1}[X_h]|} \left(|D_h(Y_h - X_h)_j| + |D_h(Y_h - X_h)_{j+1}|\right)\\
 & \leq C \left(|D_h(Y_h - X_h)_j| + |D_h(Y_h - X_h)_{j+1}|\right)
 \end{aligned}
\end{equation*}
Therefore
\begin{equation*}
 \begin{aligned}
 &|\pt{Y_h}(\rho_j)-\pt{X_h}(\rho_j)| \leq |(T_h[Y_h](\rho_j)-T_h[X_h](\rho_j)) (T_h[Y_h](\rho_j))^{\mathsf T}| \\
 & \qquad + |T_h[X_h](\rho_j)(T_h[Y_h](\rho_j)-T_h[X_h](\rho_j))^{\mathsf T}| \\
 & \leq C \left(|D_h(Y_h - X_h)_j| + |D_h(Y_h - X_h)_{j+1}|\right).
 \end{aligned}
\end{equation*}
Similarly,
\begin{equation*}
 \begin{aligned}
 |\vec\tau_j[Y_h]-\vec\tau_j[X_h]| & \leq \frac{|\vec{h}_j[Y_h] - \vec{h}_j[X_h]|}{|\vec h_j[X_h]|} + \frac{|(|\vec{h}_j[Y_h]| - |\vec{h}_j[X_h]|)\vec{\tau}_j[Y_h]|}{|\vec h_j[X_h]|} \\
 &\le \frac{ h|D_h(Y_h-X_h)_j| +\bigl||\vec h_j[X_h]|-|\vec h_j[Y_h]|\bigr| }{ |\vec h_j[X_h]| } \le C|D_h(Y_h-X_h)_j|.
 \end{aligned}
\end{equation*}
Squaring and summing, we obtain \eqref{eq:geometry perturbation}.

\begin{equation}
 \begin{aligned}
 &\|(\pt{Y_h}-\pt{X_h})v_h\|_{1,h}\\
 & = \|(\pt{Y_h}-\pt{X_h})v_h\|_{0, h} + \|D_h\left((\pt{Y_h}-\pt{X_h})v_h\right)\|_{0, h} \\
 & \leq \left\|(T_h[Y_h] - T_h[X_h])T_h[Y_h]^{\mathsf T} v_h\right\|_{0, h} + \left\|T_h[X_h](T_h[Y_h] - T_h[X_h])^{\mathsf T} v_h\right\|_{0, h}\\
 & \qquad + \left\|\overline{\pt{Y_h}-\pt{X_h}}D_hv_h\right\|_{0, h} + \left\|D_h(\pt{Y_h}-\pt{X_h})\overline{v_h}\right\|_{0, h} \\
 & \leq C \Bigl(\max_j \left\|T_h[Y_h](\rho_j) - T_h[X_h](\rho_j)\right\| + \max_j \left\|D_h(\pt{Y_h}-\pt{X_h})_j\right\|\Bigr)\\
 & \qquad \times\left\|v_h\right\|_{0, h} + C \max_j \left\|(\pt{Y_h}-\pt{X_h})(\rho_j) - (\pt{X_h})(\rho_j)\right\| \left\|v_h\right\|_{1, h} \\
 & \leq C\bigl(h^{5/2}\norm{v_h}{1,h}+h^{3/2}\norm{v_h}{0,h}\bigr)
 \end{aligned}
\end{equation}
The same estimates also hold for $P^N$.
\end{proof}

\begin{lemma}[Perturbation of the normal part]\label{lem:normalforce}
For $\phi_h\in [S_h]^2$,
\begin{equation}\label{eq:stiffness, id, cmp}
 \begin{aligned}
 &\stiff{Y_h}{Y_h}{\phi_h}-\stiff{X_h}{X_h}{\phi_h}\\
 &\quad-\bigl(\partial_{s[X_h]}(Y_h-X_h)\cdot\vec n_h[X_h], \partial_{s[X_h]}\phi_h\cdot\vec n_h[X_h]\bigr) _{\Gamma_h[X_h]}\\
 &\qquad\leq C\norm{(Y_h-X_h)_\rho}{L^\infty(I)} |Y_h-X_h|_{1,h}|\phi_h|_{1,h}.
 \end{aligned}
\end{equation}
\end{lemma}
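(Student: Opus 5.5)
The plan is to write both stiffness forms edge by edge, so that the difference becomes a sum of differences of unit tangents. Then I linearize the normalization map $a\mapsto a/|a|$. Its derivative is exactly the normal projection divided by the edge length, and that term is the bilinear form on the left of \eqref{eq:stiffness, id, cmp}. The quadratic Taylor remainder supplies the right-hand side.

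\textbf{Step 1: edgewise form.} By definition of $\partial_{s[\cdot]}$ and since $\partial_{s[Y_h]}Y_h|_{\vec h_j}=\vec\tau_j[Y_h]$, integrating over the edge of length $|\vec h_j[Y_h]|$ gives
\begin{equation*}
 \stiff{Y_h}{Y_h}{\phi_h}=\sum_j \vec\tau_j[Y_h]\cdot hD_h\phi_{h,j},
\end{equation*}
and similarly for $X_h$. Hence the difference of the two stiffness terms equals $\sum_j(\vec\tau_j[Y_h]-\vec\tau_j[X_h])\cdot hD_h\phi_{h,j}$. The same computation rewrites the correction term as
\begin{equation*}
 \bigl(\partial_{s[X_h]}(Y_h-X_h)\cdot\vec n_h[X_h],\partial_{s[X_h]}\phi_h\cdot\vec n_h[X_h]\bigr)_{\Gamma_h[X_h]}
 =\sum_j\frac{(e_j\cdot\vec n_j[X_h])(hD_h\phi_{h,j}\cdot\vec n_j[X_h])}{|\vec h_j[X_h]|},
\end{equation*}
where $e_j:=hD_h(Y_h-X_h)_j=\vec h_j[Y_h]-\vec h_j[X_h]$.

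\textbf{Step 2: Taylor expansion of the normalization map.} Let $f(a)=a/|a|$ and $a_j=\vec h_j[X_h]$. Then
\begin{equation*}
 Df(a)e=\frac{e-(e\cdot\tfrac{a}{|a|})\tfrac{a}{|a|}}{|a|},\qquad |D^2f(b)|\le \frac{C}{|b|^2}.
\end{equation*}
In two dimensions this gives $Df(a_j)e_j=(e_j\cdot\vec n_j[X_h])\vec n_j[X_h]/|a_j|$, which is precisely the linear term identified in Step 1. Therefore
\begin{equation*}
 \vec\tau_j[Y_h]-\vec\tau_j[X_h]-\frac{(e_j\cdot\vec n_j[X_h])\vec n_j[X_h]}{|a_j|}=r_j,\qquad |r_j|\le \sup_{\theta\in[0,1]}\frac{C|e_j|^2}{|a_j+\theta e_j|^2}.
\end{equation*}

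\textbf{Step 3: keeping the segment away from the origin.} This is the only delicate step. By Lemma~\ref{lem:referencegeometry}, $|a_j|\ge ch$. As in the proof of Lemma~\ref{lem:tube}, the inverse inequality together with $\norm{Y_h-X_h}{1,h}\le h^3$ gives $|e_j|\le h^{7/2}$. Hence $|a_j+\theta e_j|\ge ch/2$ for small $h$, and so
\begin{equation*}
 |r_j|\le C|D_h(Y_h-X_h)_j|^2.
\end{equation*}

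\textbf{Step 4: summation.} The left-hand side of \eqref{eq:stiffness, id, cmp} equals $\sum_j r_j\cdot hD_h\phi_{h,j}$. Its absolute value is bounded by
\begin{equation*}
 C\max_j|D_h(Y_h-X_h)_j|\; h\sum_j|D_h(Y_h-X_h)_j|\,|D_h\phi_{h,j}|.
\end{equation*}
Since $D_h(Y_h-X_h)_j$ equals $(Y_h-X_h)_\rho$ on $I_j$, the maximum equals $\norm{(Y_h-X_h)_\rho}{L^\infty(I)}$. Cauchy--Schwarz then bounds the sum by $|Y_h-X_h|_{1,h}|\phi_h|_{1,h}$, which gives the claimed estimate. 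The argument in fact bounds the absolute value of the left-hand side, not only the left-hand side itself.
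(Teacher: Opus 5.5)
Your proof is correct and follows essentially the same route as the paper: both reduce the left-hand side to $\int_I\bigl(\vec\tau_h[Y_h]-\vec\tau_h[X_h]-|(X_h)_\rho|^{-1}\bigl((Y_h-X_h)_\rho\cdot\vec n_h[X_h]\bigr)\vec n_h[X_h]\bigr)\cdot(\phi_h)_\rho\dd\rho$ edge by edge, and then bound the bracket by $C|(Y_h-X_h)_\rho|^2$. The only difference is how that bound is obtained: the paper uses an exact splitting of the bracket into a normal part $\frac{|(X_h)_\rho|-|(Y_h)_\rho|}{|(X_h)_\rho|\,|(Y_h)_\rho|}\bigl((Y_h-X_h)_\rho\cdot\vec n_h[X_h]\bigr)\vec n_h[X_h]$ and a tangential part $-\frac12|\vec\tau_h[Y_h]-\vec\tau_h[X_h]|^2\vec\tau_h[X_h]$, which needs only lower bounds on $|(X_h)_\rho|$ and $|(Y_h)_\rho|$, whereas your Taylor remainder needs the whole segment $a_j+\theta e_j$ kept away from the origin, which your Step 3 secures correctly.
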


\begin{proof}
Direct computation gives
\begin{equation*}
 \begin{aligned}
 &\stiff{Y_h}{Y_h}{\phi_h}-\stiff{X_h}{X_h}{\phi_h}\\
 &\quad-\bigl(\partial_{s[X_h]}(Y_h-X_h)\cdot\vec n_h[X_h], \partial_{s[X_h]}\phi_h\cdot\vec n_h[X_h]\bigr) _{\Gamma_h[X_h]}\\
 &= \int_I \left(\vec\tau_h[Y_h]-\vec\tau_h[X_h] -\frac{(Y_h-X_h)_\rho\cdot\vec n_h[X_h]}{|(X_h)_\rho|} \,\vec n_h[X_h]\right) \cdot(\phi_h)_\rho\,\mathrm d\rho \\
 &= \int_I \left(\frac{ \bigl|| (X_h)_\rho|-|(Y_h)_\rho|\bigr| }{ |(X_h)_\rho|\,|(Y_h)_\rho| } \bigl((Y_h-X_h)_\rho\cdot\vec n_h[X_h]\bigr) \,\vec n_h[X_h]\right) \cdot(\phi_h)_\rho\,\mathrm d\rho \\
 & \qquad +  \int_I \left(-\frac12|\vec\tau_h[Y_h]-\vec\tau_h[X_h]|^2 \,\vec\tau_h[X_h]\right) \cdot(\phi_h)_\rho\,\mathrm d\rho
 \end{aligned}
\end{equation*}
Since $|\vec\tau_j[Y_h]-\vec\tau_j[X_h]|\le C|D_h(Y_h-X_h)_j|$, we obtain \eqref{eq:stiffness, id, cmp}.
\end{proof}

\begin{lemma}[Perturbation of the tangential part]\label{lem:force}
For $\beta_h\in S_h$,
\begin{equation}\label{eq:stiffness, tang, cmp}
 \begin{aligned}
 &\stiff{Y_h}{Y_h}{\Ih[\beta_hT_h[Y_h]]} -\stiff{X_h}{X_h}{\Ih[\beta_hT_h[X_h]]}\\
 &\leq Ch^2|Y_h-X_h|_{1,h}\norm{\beta_h}{1,h}.
 \end{aligned}
\end{equation}
\end{lemma}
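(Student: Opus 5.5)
We write $E:=Y_h-X_h$. The plan is to express the stiffness form tested against a tangential field as a sum over nodes. At each node this gives a discrete second difference of the curve projected onto the nodal tangent, and that quantity is of higher order for the linear interpolant.

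\textbf{Step 1: nodal formula.} For any polygon $Z_h$ (we take $Z_h=X_h$ or $Z_h=Y_h$), summation by parts gives
\begin{equation*}
 \stiff{Z_h}{Z_h}{\phi_h}=\sum_j\bigl(\vec\tau_j[Z_h]-\vec\tau_{j+1}[Z_h]\bigr)\cdot\phi_h(\rho_j).
\end{equation*}
We insert $\phi_h=\Ih[\beta_hT_h[Z_h]]$. Each nodal term becomes $\beta_{h,j}\,g_j[Z_h]$, where
\begin{equation*}
 g_j[Z_h]:=\bigl(\vec\tau_j[Z_h]-\vec\tau_{j+1}[Z_h]\bigr)\cdot T_h[Z_h](\rho_j).
\end{equation*}
The definition \eqref{eq:def of N,T} makes $T_h$ proportional to $\vec\tau_j|\vec h_j|+\vec\tau_{j+1}|\vec h_{j+1}|$. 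Writing $a=|\vec h_j|$, $b=|\vec h_{j+1}|$ and using $|\vec\tau_j|=|\vec\tau_{j+1}|=1$, we get
\begin{equation*}
 g_j=\frac{(a-b)\bigl(1-\vec\tau_j\cdot\vec\tau_{j+1}\bigr)}{|\vec h_j+\vec h_{j+1}|}
 =\frac{(a-b)\,|\vec\tau_j-\vec\tau_{j+1}|^2}{2|\vec h_j+\vec h_{j+1}|}.
\end{equation*}
Thus $g_j$ is a smooth function $G(\vec h_j,\vec h_{j+1})$. It is a product of a length difference and the square of a tangent difference. This cubic structure is what produces the $h^2$ gain.

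\textbf{Step 2: linearize.} The left-hand side of \eqref{eq:stiffness, tang, cmp} equals $\sum_j\beta_{h,j}\bigl(g_j[Y_h]-g_j[X_h]\bigr)$. By the mean value theorem, each difference is $\nabla G$ evaluated on the segment between the two edge pairs, applied to $(hD_hE_j,hD_hE_{j+1})$.

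We use $|\vec h_j[Y_h]-\vec h_j[X_h]|\le h^{7/2}$ from the proof of Lemma~\ref{lem:tube}. Together with Lemma~\ref{lem:referencegeometry}, this keeps $a-b=O(h^2)$ and $|\vec\tau_j-\vec\tau_{j+1}|=O(h)$ along the whole segment. Here the $O(h^{5/2})$ perturbation of the edges is harmless against $h^2\cdot h$ only after care, so we track it explicitly.

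Differentiating $G$ gives two kinds of terms:
\begin{enumerate}
 \item[(i)] The derivative hits $a-b$. This yields a factor $O(h|D_hE|)\cdot O(h^2)/O(h)=O(h^2|D_hE|)$.
 \item[(ii)] The derivative hits $|\vec\tau_j-\vec\tau_{j+1}|^2$. This yields $O(h^2)\cdot O(h)\cdot O(|D_hE|)/O(h)=O(h^2|D_hE|)$.
\end{enumerate}
Differentiating the denominator gives a smaller contribution. Hence
\begin{equation*}
 |g_j[Y_h]-g_j[X_h]|\le Ch^2\bigl(|D_hE_j|+|D_hE_{j+1}|\bigr).
\end{equation*}

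\textbf{Step 3: sum.} By Cauchy--Schwarz,
\begin{equation*}
 \sum_j|\beta_{h,j}|\,h^2|D_hE_j|\le h^2\Bigl(\sum_j|\beta_{h,j}|^2\Bigr)^{1/2}\Bigl(\sum_j|D_hE_j|^2\Bigr)^{1/2}=h\,\norm{\beta_h}{0,h}\,|E|_{1,h}.
\end{equation*}
This loses one power of $h$ compared with the claim.

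\textbf{Main obstacle: recovering the missing power of $h$.} Pointwise bounds are not enough. One must sum by parts in $j$ and move a difference onto $\beta_h$, which uses $\norm{\beta_h}{1,h}$ as allowed by the statement.

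The route we would try first is a more careful expansion of $g_j[Y_h]-g_j[X_h]$. The $(a-b)$-perturbation term is proportional to $\vec\tau_j\cdot hD_hE_j-\vec\tau_{j+1}\cdot hD_hE_{j+1}$, up to coefficients. This is $h$ times a discrete difference of the edge quantity $\vec\tau\cdot D_hE$. Summation by parts then gives $h\sum_j hD_h(\beta_h c)_j\,(\vec\tau\cdot D_hE)_j$ with $c=O(h)$, which is $O(h^2)\norm{\beta_h}{1,h}|E|_{1,h}$. The smoothness of $c$ comes from the difference bounds in Lemma~\ref{lem:referencegeometry}.

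The tangent-difference term has the same structure. It is linear in $\vec\tau_j-\vec\tau_{j+1}$ perturbed by $(\vec n\cdot hD_hE_j)/a-(\vec n\cdot hD_hE_{j+1})/b$, which is again a difference, with coefficient $O(h)\cdot O(h^2)$.

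The remaining commutator terms involve second differences of $X_h$. These are controlled by the $|(D_h)^3X_h|$ and second-difference edge bounds in Lemma~\ref{lem:referencegeometry}.

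The quadratic remainders in $E$ carry an extra factor $|D_hE|\le h^{5/2}$. This absorbs the lost power of $h$ directly.
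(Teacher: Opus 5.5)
Your proposal is correct, and it works by the same mechanism as the paper's proof. The only first-order contributions of pointwise size $h^2|D_hE_j|$ pair a coefficient, whose increments in $j$ are one power of $h$ smaller than its size, with a discrete difference of an edge quantity. Summation by parts then moves that difference onto $\beta_h$, which is where $\norm{\beta_h}{1,h}$ enters.

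The bookkeeping differs. The paper writes the nodal term as $a(v_j,w_j)$ with $v_j=(\vec h_{j+1}+\vec h_j)/h$ and $w_j=(\vec h_{j+1}-\vec h_j)/h$. In these variables the difference structure is built in, since the $w$-perturbation is literally $D_hE_{j+1}-D_hE_j$. The single fact $a=O(|w|^3)$ then gives all needed bounds: $\partial_v a=O(h^3)$ pointwise, and $\partial_w a=O(h^2)$ with $O(h^3)$ increments. Integrating exactly along the common homotopy $X_h+\theta(Y_h-X_h)$ leaves no remainder, though the coefficient increments must then be checked on the perturbed polygons.

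You instead linearize the explicit factorization $(|\vec h_j|-|\vec h_{j+1}|)\,|\vec\tau_j-\vec\tau_{j+1}|^2/(2|\vec h_j+\vec h_{j+1}|)$ at $X_h$ and locate the differences by hand. They are exact differences: the first variation of $|\vec h_j|-|\vec h_{j+1}|$ is $h(\vec\tau_j\cdot D_hE_j-\vec\tau_{j+1}\cdot D_hE_{j+1})$, and $(\vec\tau_j-\vec\tau_{j+1})\cdot\vec n_j=(\vec\tau_j-\vec\tau_{j+1})\cdot\vec n_{j+1}$ because $R$ is skew. Your route keeps all coefficients on the interpolant of the smooth curve, so their increments come straight from Lemma~\ref{lem:referencegeometry}. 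It also shows which geometric factor supplies which power of $h$. The cost is three separate pieces plus a quadratic remainder.

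Your justification of that remainder is slightly off. It is $O(h|D_hE_j|^2)$, dominated by the term with both derivatives on $|\vec\tau_j-\vec\tau_{j+1}|^2$. So the gain over your first-order bound is $|D_hE_j|/h\le h^{3/2}$, not $|D_hE_j|$. This still suffices: Cauchy--Schwarz gives $Ch^{5/2}\norm{\beta_h}{0,h}|E|_{1,h}$. Even the crude bound $C|D_hE_j|^2$ works if you sum using $\max_j|\beta_{h,j}|\le C\norm{\beta_h}{1,h}$ and $\sum_j|D_hE_j|^2=h^{-1}|E|_{1,h}^2\le h^2|E|_{1,h}$.
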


\begin{proof}
For any $Z_h \in [S_h]^2$, direct computation gives
\begin{equation}\label{eq:equidistribution}
 \begin{aligned}
 &\stiff{Z_h}{Z_h}{\Ih[\beta_hT_h[Z_h]]} =\sum_j\beta_{h,j} (\vec\tau_j[Z_h]-\vec\tau_{j+1}[Z_h])\cdot T_h[Z_h](\rho_j)\\
 &=\sum_j\beta_{h,j} a(\vec{h}_{j+1}[Z_h] + \vec{h}_{j}[Z_h], \vec{h}_{j+1}[Z_h] - \vec{h}_{j}[Z_h]).
 \end{aligned}
\end{equation}
Here the auxiliary term $a(\cdot, \cdot)$ is given as
\begin{equation}
 \begin{aligned}
 a(v, w) &= \left(\frac{v-w}{|v-w|} - \frac{v+w}{|v+w|}\right) \cdot \frac{v}{|v|} \\
 & = \frac{|v-w| - |v+w|}{4|v|}\left|\frac{v-w}{|v-w|} - \frac{v+w}{|v+w|}\right|^2.
 \end{aligned}
\end{equation}

From the definition, we know that $a(v, w) = O(|w|^3)$. Therefore, for $|v|\geq c>0$ and sufficiently small $|w|$, it holds uniformly that
\begin{equation}
 |\partial_v a|\leq C |w|^3, \quad |\partial_w a| + |\partial_w \partial_v a| \leq C |w|^2, \quad |\partial_w \partial_w a| \leq C |w|.
\end{equation}

For $0\leq \theta \leq 1$, set $v^\theta, w^\theta$ as
\begin{equation}
 \begin{aligned}
 & v^\theta_j = \frac{1}{h}\left(\vec{h}_{j+1}[X_h + \theta (Y_h - X_h)] + \vec{h}_{j}[X_h + \theta (Y_h - X_h)] \right), \\
 & w^\theta_j = \frac{1}{h}\left(\vec{h}_{j+1}[X_h + \theta (Y_h - X_h)] - \vec{h}_{j}[X_h + \theta (Y_h - X_h)] \right).
 \end{aligned}
\end{equation}
We already know that, $|v_j^\theta| \geq c>0,\, |w_j^\theta|\leq C h, \, |v_{j+1}^\theta - v_{j}^\theta| \leq C h, |w_{j+1}^\theta - w_j^\theta| \leq C h^2$. Consequently, we have
\begin{equation}
 \begin{gathered}
 |\partial_v a(v_j^\theta, w_j^\theta)|\leq C h^3, \qquad |\partial_w a(v_j^\theta, w_j^\theta)| \leq C h^2,\\
 |\partial_w a(v_{j+1}^\theta, w_{j+1}^\theta) - \partial_w a(v_{j}^\theta, w_{j}^\theta) | \leq C h^3.
 \end{gathered}
\end{equation}

Combining everything together, we obtain
\begin{equation}
 \begin{aligned}
 &\bigl( \partial_{s[Y_h]}Y_h, \partial_{s[Y_h]}I_h[\beta_hT_h[Y_h]] \bigr)_{\Gamma_h[Y_h]}- \bigl( \partial_{s[X_h]}X_h, \partial_{s[X_h]}I_h[\beta_hT_h[X_h]] \bigr)_{\Gamma_h[X_h]} \\
 & =  \int_0^1 d\theta\sum_j \beta_{h, j}  \frac{d}{d\theta}a(v_j^\theta, w_j^\theta) \, \\
 & = \int_0^1 d\theta\sum_j \beta_{h, j} \frac{1}{h}\partial_v a(v_j^\theta, w_j^\theta) \cdot \left(\vec{h}_{j+1}[Y_h]-\vec{h}_{j+1}[X_h] + \vec{h}_j[Y_h] - \vec{h}_{j}[X_h]\right) \\
 & \qquad + \int_0^1 d\theta\sum_j \beta_{h, j} \frac{1}{h}\partial_w a(v_j^\theta, w_j^\theta)\\
 & \qquad\qquad \cdot \left(\left(\vec{h}_{j+1}[Y_h]- \vec{h}_{j+1}[X_h]\right) - \left(\vec{h}_{j}[Y_h]  - \vec{h}_{j}[X_h]\right)\right) \\
 & \leq C h^2 \left\|\beta_h\right\|_{0, h} |Y_h - X_h|_{1, h}\\
 & \qquad - \int_0^1 d\theta\sum_j h D_h(\beta_h)_{j+1}\partial_w a(v_{j+1}^\theta, w_{j+1}^\theta)\cdot D_h(Y_h - X_h)_{j+1}\\
 & \qquad + \int_0^1 d\theta\sum_j \frac{\beta_{h, j}}{h}\left(- \partial_w a(v_{j+1}^\theta, w_{j+1}^\theta) + \partial_w a(v_j^\theta, w_j^\theta)\right)\cdot D_h(Y_h - X_h)_{j+1}\\
 & \leq C h^2 \left\|\beta_h\right\|_{1, h} |Y_h - X_h|_{1, h} .
 \end{aligned}
\end{equation}
\end{proof}

\section{Approximation flows and the improved defect}\label{sec:bea}

\subsection{Expansion of the BGN formulation}\label{subsec:weakexpansions}
Let $Y\in W^{6,\infty}(I)$ be a regular embedded counterclockwise parametrization with $\inf_I|Y_\rho|\geq c>0$, and set $Y_h=\Ih Y$. All geometric quantities without an indicated argument are evaluated on $Y$. We take $h$ sufficiently small that the averages of $\inf_I |Y_{h, \rho}| \geq c/2$. Throughout, $O(h^r)$ denotes a term bounded by $h^rC(c,\norm Y{W^{6,\infty}(I)})$ uniformly in $I$.

\begin{lemma}[Expansion of the mass-lumping on the interval]\label{lem:weakquadrature}
For $f\in W^{4,\infty}(I)$ and $\phi_h\in S_h$,
\begin{equation}\label{eq:interval-mass-expansion}
 \begin{aligned}
 \left|\int_I\Ih[f\phi_h]\dd\rho-\int_I f\phi_h\dd\rho +\frac{h^2}{12}\int_I f_{\rho\rho}\phi_h\dd\rho\right| \leq Ch^4\norm f{W^{4,\infty}(I)}\norm{\phi_h}{0,h}.
 \end{aligned}
\end{equation}
\end{lemma}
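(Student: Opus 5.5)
The estimate is local, so I will prove it element by element and then sum, using Cauchy--Schwarz with the discrete norm $\norm{\phi_h}{0,h}$. On $I_j=(\rho_{j-1},\rho_j)$ write $\phi_h=\phi_{j-1}\lambda_0+\phi_j\lambda_1$, where $\lambda_0,\lambda_1$ are the affine hat restrictions. Then
\begin{equation*}
 \int_{I_j}\Ih[f\phi_h]\dd\rho=\frac h2\bigl(f_{j-1}\phi_{j-1}+f_j\phi_j\bigr),
\end{equation*}
because $\Ih[f\phi_h]$ interpolates the nodal products. The error functional on $I_j$ is therefore linear in the pair $(\phi_{j-1},\phi_j)$. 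It suffices to treat one hat coefficient, say $\phi_j$, on a single element and then sum over $j$.

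Fix the element and a coefficient, and set $E_j^{(1)}(f)=\frac h2 f_j-\int_{I_j}f\lambda_1\dd\rho+\frac{h^2}{12}\int_{I_j}f_{\rho\rho}\lambda_1\dd\rho$. I will Taylor expand $f$ around the midpoint $\bar\rho_j$ up to order $3$ with a $W^{4,\infty}$ remainder and compute the functional on monomials $(\rho-\bar\rho_j)^k$, $k\le3$. The piece $\frac h2 f_j-\int f\lambda_1$ is the error of the trapezoid-type rule against the weight $\lambda_1$. On constants it vanishes. On the linear term it gives $\frac h2\cdot\frac h2-\frac{h^2}{12}=\frac{h^2}{6}$ times $f'$, which is \emph{not} $O(h^4)$ locally.

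So the local functional alone is not small. The cancellation must come from combining the two elements sharing the node $\rho_j$, namely $I_j$ with weight $\lambda_1$ and $I_{j+1}$ with weight $\lambda_0$. I will therefore reorganize by nodes:
\begin{equation*}
 \int_I\Ih[f\phi_h]-\int_I f\phi_h+\frac{h^2}{12}\int_I f_{\rho\rho}\phi_h=\sum_j\phi_j\Bigl(hf_j-\int f\psi_j+\frac{h^2}{12}\int f_{\rho\rho}\psi_j\Bigr),
\end{equation*}
where $\psi_j$ is the full hat function centered at $\rho_j$.

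Expand $f$ around $\rho_j$. The hat $\psi_j$ is symmetric, so odd moments vanish. We have $\int\psi_j=h$ and $\int(\rho-\rho_j)^2\psi_j=h^3/6$. Hence
\begin{equation*}
 \int f\psi_j=hf_j+\frac{h^3}{12}f''_j+O(h^5)\norm f{W^{4,\infty}},
\end{equation*}
where the cubic term drops by symmetry and the quartic remainder is bounded by $h^4\cdot h$. Similarly $\int f_{\rho\rho}\psi_j=hf''_j+O(h^3)\norm f{W^{4,\infty}}$, using the second-order symmetric expansion of $f''$. So the bracket equals $-\frac{h^3}{12}f''_j+\frac{h^3}{12}f''_j+O(h^5)=O(h^5)\norm f{W^{4,\infty}}$.

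Summing, the total error is bounded by $Ch^5\norm f{W^{4,\infty}}\sum_j|\phi_j|$. By Cauchy--Schwarz, $\sum_j|\phi_j|\le h^{-1/2}\bigl(\sum_j|\phi_j|^2\bigr)^{1/2}$, and $\bigl(\sum_j|\phi_j|^2\bigr)^{1/2}=h^{-1/2}\norm{\phi_h}{0,h}$, so $\sum_j|\phi_j|\le h^{-1}\norm{\phi_h}{0,h}$. This yields the bound $Ch^4\norm f{W^{4,\infty}}\norm{\phi_h}{0,h}$.

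The only delicate point is recognizing that the node-wise regrouping is necessary, since element-wise errors are only $O(h^2)$. I also need to keep track of the moments of the hat function so that the $h^2/12$ correction exactly cancels the second moment. One must also check that the remainder in the $f_{\rho\rho}$ term needs only $W^{4,\infty}$, which holds because it is multiplied by the $h^2$ prefactor.
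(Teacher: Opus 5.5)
Your proposal is correct and is essentially the paper's proof: you regroup the error node by node against the hat functions $\psi_j$, show that each nodal bracket $hf(\rho_j)-\int_I f\psi_j\,\mathrm d\rho+\frac{h^2}{12}\int_I f_{\rho\rho}\psi_j\,\mathrm d\rho$ is $O(h^5)\|f\|_{W^{4,\infty}(I)}$, and sum using $h\sum_j|\phi_h(\rho_j)|\le\|\phi_h\|_{0,h}$. The only cosmetic difference is in the local bound: the paper uses the exact identity $\int_I f_{\rho\rho}\psi_j\,\mathrm d\rho=(f(\rho_{j-1})-2f(\rho_j)+f(\rho_{j+1}))/h$ together with the cubic-exact rule $\int_{-1}^1(1-|\xi|)p(\xi)\,\mathrm d\xi=\frac{p(-1)+10p(0)+p(1)}{12}$, whereas you compute the hat moments directly by Taylor expansion.
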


\begin{proof}
Let $\psi_j$ be the nodal basis function at $\rho_j$. We have
\begin{equation}\label{eq:hat-second-derivative}
 \int_I f_{\rho\rho}\psi_j\dd\rho =\frac{f(\rho_{j-1})-2f(\rho_j)+f(\rho_{j+1})}{h}.
\end{equation}
The quadrature rule
\begin{equation}\label{eq:hat-quadrature-rule}
 \int_{-1}^1(1-|\xi|)p(\xi)\dd\xi =\frac{p(-1)+10p(0)+p(1)}{12}
\end{equation}
is exact for polynomials $p$ of degree at most three. Scaling, and \eqref{eq:hat-second-derivative}--\eqref{eq:hat-quadrature-rule} give
\begin{equation}\label{eq:hat-quadrature-error}
 \left|hf(\rho_j)-\int_I f\psi_j\dd\rho +\frac{h^2}{12}\int_I f_{\rho\rho}\psi_j\dd\rho\right| \leq Ch^5\norm{\partial_\rho^4f}{L^\infty(I)}.
\end{equation}
Multiplying \eqref{eq:hat-quadrature-error} by $\phi_h(\rho_j)$ and summing, with $h\sum_j|\phi_h(\rho_j)|\leq\norm{\phi_h}{0,h}$, proves \eqref{eq:interval-mass-expansion}.
\end{proof}

\begin{lemma}\label{lem:Y-derivatives}
The following identities hold:
\begin{align}
 Y_\rho &= |Y_\rho|\,\vec{\tau}, \label{eq:Y1}\\
 Y_{\rho\rho} &= \bigl(|Y_\rho|\bigr)_\rho\,\vec{\tau} - |Y_\rho|^2\kappa\,\vec{n}, \label{eq:Y2}\\
 Y_{\rho\rho\rho} &= \Bigl(\bigl(|Y_\rho|\bigr)_{\rho\rho}-|Y_\rho|^3\kappa^2\Bigr)\,\vec{\tau} - \Bigl(3|Y_\rho|\bigl(|Y_\rho|\bigr)_\rho\,\kappa + |Y_\rho|^2\kappa_\rho\Bigr)\,\vec{n}. \label{eq:Y3}
\end{align}
\end{lemma}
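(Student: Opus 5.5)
The plan is to differentiate the identity \eqref{eq:Y1} repeatedly with respect to $\rho$, using the Frenet relations \eqref{eq:Frenet-relations} converted from arclength to the parameter $\rho$. The first identity is simply the definition $\vec\tau=Y_\rho/|Y_\rho|$ rearranged. For the subsequent identities, the key tool is that $\partial_s=|Y_\rho|^{-1}\partial_\rho$. Hence
\begin{equation*}
 \vec\tau_\rho=|Y_\rho|\,\partial_s\vec\tau=-|Y_\rho|\kappa\,\vec n,\qquad \vec n_\rho=|Y_\rho|\,\partial_s\vec n=|Y_\rho|\kappa\,\vec\tau .
\end{equation*}
These two formulas are all that is needed; the regularity $Y\in W^{6,\infty}(I)$ with $\inf_I|Y_\rho|>0$ guarantees that $|Y_\rho|$, $\vec\tau$, $\vec n$ and $\kappa$ are differentiable as often as required.

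For \eqref{eq:Y2}, I will apply the product rule to $Y_\rho=|Y_\rho|\vec\tau$:
\begin{equation*}
 Y_{\rho\rho}=(|Y_\rho|)_\rho\vec\tau+|Y_\rho|\vec\tau_\rho=(|Y_\rho|)_\rho\vec\tau-|Y_\rho|^2\kappa\,\vec n .
\end{equation*}
For \eqref{eq:Y3}, I will differentiate this expression once more, again substituting the formulas for $\vec\tau_\rho$ and $\vec n_\rho$.

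The tangential coefficient of $Y_{\rho\rho\rho}$ then collects two contributions. The first is $(|Y_\rho|)_{\rho\rho}$. The second is $-|Y_\rho|^2\kappa\cdot|Y_\rho|\kappa=-|Y_\rho|^3\kappa^2$, which comes from $\vec n_\rho$.

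The normal coefficient collects three contributions:
\begin{itemize}
\item $-(|Y_\rho|)_\rho|Y_\rho|\kappa$, from $\vec\tau_\rho$;
\item $-(|Y_\rho|^2)_\rho\kappa=-2|Y_\rho|(|Y_\rho|)_\rho\kappa$;
\item $-|Y_\rho|^2\kappa_\rho$.
\end{itemize}
Summing these gives $-(3|Y_\rho|(|Y_\rho|)_\rho\kappa+|Y_\rho|^2\kappa_\rho)$, as stated.

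There is no genuine obstacle here. The only point needing care is the bookkeeping of the sign convention: $\vec n=R\vec\tau$ is the outward normal with $\partial_s\vec\tau=-\kappa\vec n$, and the factor $|Y_\rho|$ must appear each time an arclength derivative is converted to a $\rho$-derivative.
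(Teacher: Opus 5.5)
Your proposal is correct and follows the paper's proof essentially line for line. The paper also converts the Frenet relations to $\vec\tau_\rho=-|Y_\rho|\kappa\vec n$ and $\vec n_\rho=|Y_\rho|\kappa\vec\tau$, then differentiates \eqref{eq:Y1} and \eqref{eq:Y2} by the product rule, with the same bookkeeping of the tangential and normal contributions.
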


\begin{proof}
Since $\partial_\rho = |Y_\rho|\,\partial_s$, \eqref{eq:Frenet-relations} gives $\vec{\tau}_\rho = -|Y_\rho|\kappa\,\vec{n}$ and $\vec{n}_\rho = |Y_\rho|\kappa\,\vec{\tau}$. Identity \eqref{eq:Y1} is the definition of $\vec{\tau}$. Differentiating \eqref{eq:Y1},
\begin{equation*}
 Y_{\rho\rho} = \bigl(|Y_\rho|\bigr)_\rho\,\vec{\tau} + |Y_\rho|\,\vec{\tau}_\rho = \bigl(|Y_\rho|\bigr)_\rho\,\vec{\tau} - |Y_\rho|^2\kappa\,\vec{n} ,
\end{equation*}
which is \eqref{eq:Y2}. Differentiating \eqref{eq:Y2},
\begin{equation*}
 \begin{aligned}
 Y_{\rho\rho\rho} &= \bigl(|Y_\rho|\bigr)_{\rho\rho}\,\vec{\tau} + \bigl(|Y_\rho|\bigr)_\rho\,\vec{\tau}_\rho - \bigl(|Y_\rho|^2\kappa\bigr)_\rho\,\vec{n} - |Y_\rho|^2\kappa\,\vec{n}_\rho\\
 &= \bigl(|Y_\rho|\bigr)_{\rho\rho}\,\vec{\tau} - |Y_\rho|\bigl(|Y_\rho|\bigr)_\rho\kappa\,\vec{n} - \Bigl(2|Y_\rho|\bigl(|Y_\rho|\bigr)_\rho\kappa + |Y_\rho|^2\kappa_\rho\Bigr)\,\vec{n} - |Y_\rho|^3\kappa^2\,\vec{\tau} ,
 \end{aligned}
\end{equation*}
which is \eqref{eq:Y3}.
\end{proof}

\begin{lemma}[Expansion of the nodal normal and tangential]\label{lem:frameexpansion}
For $\widetilde N_h[Y_h], T_h[Y_h]$ defined in \eqref{eq:def of N,T} and sufficiently small $h$,
\begin{equation}\label{eq:nodal-frame-expansion}
 \begin{aligned}
 &\left\|\widetilde N_h[Y_h]-\Ih\left[\vec n+h^2\Bigl(\Bigl(\frac12(|Y_\rho|)_\rho\kappa+\frac16|Y_\rho|\kappa_\rho\Bigr)\vec\tau -\frac18|Y_\rho|^2\kappa^2\vec n\Bigr)\right]\right\|_{L^\infty(I)}\\
 &\qquad \leq h^4C\!\left(c,\norm Y{W^{6,\infty}(I)}\right).
 \end{aligned}
\end{equation}
\begin{equation}\label{eq:nodal-tangent-expansion}
 \left\|T_h[Y_h]-\Ih\left[\vec\tau-h^2\Bigl(\frac12(|Y_\rho|)_\rho\kappa+\frac16|Y_\rho|\kappa_\rho\Bigr)\vec n\right]\right\|_{L^\infty(I)} \leq h^4C\!\left(c,\norm Y{W^{6,\infty}(I)}\right).
\end{equation}
\end{lemma}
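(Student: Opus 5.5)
Both estimates are nodal, since $\widetilde N_h[Y_h]$ and $T_h[Y_h]$ lie in $[S_h]^2$. The $L^\infty(I)$ norm of a difference of interpolants is therefore the maximum of the nodal differences, and it suffices to Taylor-expand the two edge vectors $\vec h_j[Y_h]$ and $\vec h_{j+1}[Y_h]$ about $\rho_j$. The plan is to expand the sum $\vec h_j+\vec h_{j+1}$ and the two lengths $|\vec h_j|,|\vec h_{j+1}|$ in powers of $h$, and then to convert the derivatives of $Y$ into the frame $(\vec\tau,\vec n)$ using Lemma~\ref{lem:Y-derivatives}. Writing $Y^{(k)}=\partial_\rho^kY(\rho_j)$, we have
\begin{equation*}
 \vec h_{j+1}=hY'+\tfrac{h^2}{2}Y''+\tfrac{h^3}{6}Y'''+\tfrac{h^4}{24}Y^{(4)}+\tfrac{h^5}{120}Y^{(5)}+O(h^6),
\end{equation*}
and $\vec h_j$ is the same expression with $h\mapsto -h$ and an overall sign flip. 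The odd powers cancel in the difference structure, so that
\begin{equation*}
 \vec h_j+\vec h_{j+1}=2h\bigl(Y'+\tfrac{h^2}{6}Y'''+O(h^4)\bigr).
\end{equation*}
Because $\vec h_{j+1}(h)$ and $\vec h_j(h)=-\vec h_{j+1}(-h)$ are related in this way, the symmetric combination $|\vec h_j|+|\vec h_{j+1}|$ is even in $h$. Hence $|\vec h_j|+|\vec h_{j+1}|=2h\bigl(|Y'|+h^2 L_2+O(h^4)\bigr)$, with $L_2$ the $h^2$ coefficient of $|Y'+\tfrac h2Y''+\tfrac{h^2}6Y'''|$ after symmetrization. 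The parity observation is what guarantees that the remainder is $O(h^4)$ and not merely $O(h^3)$, so it needs to be kept in view throughout.

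Concretely, I would compute
\begin{equation*}
 |Y'+aY''+bY'''|=|Y'|+\frac{Y'\cdot(aY''+bY''')}{|Y'|}+\frac{a^2\bigl(|Y''|^2-(Y''\cdot\vec\tau)^2\bigr)}{2|Y'|}+O(h^3),
\end{equation*}
with $a=\pm h/2$ and $b=h^2/6$. Inserting \eqref{eq:Y1}--\eqref{eq:Y3} gives $Y''\cdot\vec\tau=(|Y_\rho|)_\rho$, $|Y''|^2-(Y''\cdot\vec\tau)^2=|Y_\rho|^4\kappa^2$, and $Y'''\cdot\vec\tau=(|Y_\rho|)_{\rho\rho}-|Y_\rho|^3\kappa^2$. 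The even part of $|\vec h_{j+1}|/h$ is then
\begin{equation*}
 |Y_\rho|+h^2\Bigl(\tfrac16(|Y_\rho|)_{\rho\rho}-\tfrac16|Y_\rho|^3\kappa^2+\tfrac18|Y_\rho|^3\kappa^2\Bigr).
\end{equation*}
Dividing by the numerator $Y'+\tfrac{h^2}{6}Y'''$, and again using \eqref{eq:Y3} for the tangential and normal components of $Y'''$, yields
\begin{equation*}
 \widetilde T_h=\vec\tau+h^2\Bigl(\tfrac18|Y_\rho|^2\kappa^2\vec\tau-\bigl(\tfrac12(|Y_\rho|)_\rho\kappa+\tfrac16|Y_\rho|\kappa_\rho\bigr)\vec n\Bigr)+O(h^4).
 \end{equation*}
The $(|Y_\rho|)_{\rho\rho}$ terms cancel, and the $\vec\tau$ coefficient collapses to $\tfrac16-\tfrac16+\tfrac18=\tfrac18$, after the $-\tfrac18$ from the denominator is corrected properly.

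Applying $R$, which maps $\vec\tau\mapsto\vec n$ and $\vec n\mapsto-\vec\tau$, gives \eqref{eq:nodal-frame-expansion}; I would double-check the sign of the $\kappa^2$ term at this stage. Normalizing to obtain $T_h=\widetilde T_h/|\widetilde T_h|$ removes the $\vec\tau$ correction at order $h^2$, because $|\widetilde T_h|=1+\tfrac18 h^2|Y_\rho|^2\kappa^2+O(h^4)$ and the normal correction enters the norm only at order $h^4$. This gives \eqref{eq:nodal-tangent-expansion}. The remainders are uniform, bounded by $C(c,\norm Y{W^{6,\infty}})$, since Taylor expansion to order five uses at most $Y^{(6)}$ and $|Y_\rho|\ge c$ keeps the denominators away from zero.

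The main obstacle is the bookkeeping of the $h^2$ coefficient in the length expansion, namely the curvature term produced by the transverse part of $Y''$, together with verifying that the $h^3$ terms genuinely cancel so that the error is $O(h^4)$.
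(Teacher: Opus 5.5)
Your route is the same as the paper's: Taylor-expand the chord $Y(\rho_j+h)-Y(\rho_j-h)$ and the two edge lengths, convert with Lemma~\ref{lem:Y-derivatives}, apply $R$, then normalize. There is one small variation. You expand $|\vec h_{j+1}|/h$ at the node, keeping the transverse term $(Y_{\rho\rho}\cdot\vec n)^2/(2|Y_\rho|)$, and use parity. The paper instead expands at the edge midpoints $\rho_j\pm h/2$, where only $\vec\tau\cdot Y_{\rho\rho\rho}$ enters and the averaging supplies $\frac18(|Y_\rho|)_{\rho\rho}$. Both give the same denominator $|Y_\rho|+h^2\bigl(\frac16(|Y_\rho|)_{\rho\rho}-\frac1{24}|Y_\rho|^3\kappa^2\bigr)+O(h^4)$. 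Your parity argument is sound: write $|\vec h_{j+1}|/h=G(h)$ and $|\vec h_j|/h=G(-h)$ with $G$ smooth near $0$.

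There is, however, a sign error in the last combination, and as written it contradicts \eqref{eq:nodal-frame-expansion}. The $h^2$ coefficient of $\vec\tau$ in $\widetilde T_h$ is $\frac{\vec\tau\cdot Y_{\rho\rho\rho}}{6|Y_\rho|}-\frac{L}{|Y_\rho|}$, where $L$ is the denominator correction. In the $\kappa^2$ terms this is $-\frac16|Y_\rho|^2\kappa^2+\frac1{24}|Y_\rho|^2\kappa^2=-\frac18|Y_\rho|^2\kappa^2$, and the $(|Y_\rho|)_{\rho\rho}$ terms cancel. The $+\frac18|Y_\rho|^3\kappa^2$ in the edge length sits in the denominator, so it enters $\widetilde T_h$ with a minus sign. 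The $-\frac18$ you first obtained was correct; ``correcting'' it to $+\frac18$ is the mistake. With your sign, applying $R$ would give $+\frac18|Y_\rho|^2\kappa^2\vec n$ instead of the stated $-\frac18|Y_\rho|^2\kappa^2\vec n$.

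A quick check settles it. The triangle inequality $|\vec h_j+\vec h_{j+1}|\le|\vec h_j|+|\vec h_{j+1}|$ forces $|\widetilde T_h|\le 1$, so the $h^2$ term of $|\widetilde T_h|$ cannot be positive. On a uniformly parametrized circle, $|\widetilde T_h|=\cos(\pi h)=1-\frac18h^2(|Y_\rho|\kappa)^2+O(h^4)$, since $|Y_\rho|\kappa=2\pi$.

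With the sign fixed, $R$ gives exactly \eqref{eq:nodal-frame-expansion}, and $|\widetilde T_h|=1-\frac18h^2|Y_\rho|^2\kappa^2+O(h^4)$. Your normalization argument for \eqref{eq:nodal-tangent-expansion} is unaffected, because the tangential $h^2$ correction cancels under normalization whatever its sign.
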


\begin{proof}
It suffices to prove the estimate at the nodes.

For the numerator, Taylor's formula and \eqref{eq:Y1}, \eqref{eq:Y3}, together with $R\vec\tau=\vec n$ and $R\vec n=-\vec\tau$, give
\begin{equation}\label{eq:nodal-num}
 \begin{aligned}
 &\frac{R(\vec h_j[Y_h]+\vec h_{j+1}[Y_h])}{2h} =R\frac{Y(\rho_j+h)-Y(\rho_j-h)}{2h} =RY_\rho+\frac{h^2}{6}RY_{\rho\rho\rho}+O(h^4)\\
 &=|Y_\rho|\vec n+\frac{h^2}{6}\Bigl[\Bigl((|Y_\rho|)_{\rho\rho}-|Y_\rho|^3\kappa^2\Bigr)\vec n +\Bigl(3|Y_\rho|(|Y_\rho|)_\rho\kappa+|Y_\rho|^2\kappa_\rho\Bigr)\vec\tau\Bigr]+O(h^4).
 \end{aligned}
\end{equation}

For the denominator, Taylor's formula at $\rho_j\pm h/2$ gives
\begin{equation}\label{eq:Y at mid}
 \begin{aligned}
 \frac{|\vec h_{j+1}[Y_h]|}{h} &=\Bigl|Y_\rho+\frac{h^2}{24}Y_{\rho\rho\rho}\Bigr|(\rho_j+h/2)+O(h^4),\\
 \frac{|\vec h_{j}[Y_h]|}{h} &=\Bigl|Y_\rho+\frac{h^2}{24}Y_{\rho\rho\rho}\Bigr|(\rho_j-h/2)+O(h^4).
 \end{aligned}
\end{equation}
Since $|Y_\rho|\geq c > 0$, we have $\bigl|Y_\rho+\frac{h^2}{24}Y_{\rho\rho\rho}\bigr| =|Y_\rho|+\frac{h^2}{24}\vec\tau\cdot Y_{\rho\rho\rho}+O(h^4)$. By \eqref{eq:Y3}, $\vec\tau\cdot Y_{\rho\rho\rho}=(|Y_\rho|)_{\rho\rho}-|Y_\rho|^3\kappa^2$. Averaging the values at $\rho_j+h/2$ and $\rho_j-h/2$ cancels the odd powers of $h$, which yields
\begin{equation}\label{eq:nodal-den}
 \begin{aligned}
 \frac{|\vec h_j[Y_h]|+|\vec h_{j+1}[Y_h]|}{2h} &=\left(|Y_\rho| + \frac{h^2}{8}(|Y_\rho|)_{\rho\rho} + O(h^4)\right)\\
 &\quad+\left(\frac{h^2}{24}\Bigl((|Y_\rho|)_{\rho\rho}-|Y_\rho|^3\kappa^2\Bigr) + O(h^4)\right)\\
 &=|Y_\rho|+h^2\Bigl(\frac16(|Y_\rho|)_{\rho\rho}-\frac1{24}|Y_\rho|^3\kappa^2\Bigr)+O(h^4).
 \end{aligned}
\end{equation}

For any $x$ with $|Y_\rho|+x\geq c/2$, we have
\begin{equation*}
 \frac{1}{|Y_\rho|+x}=\frac1{|Y_\rho|}-\frac{x}{|Y_\rho|^2}+\frac{x^2}{|Y_\rho|^2(|Y_\rho|+x)}.
\end{equation*}
We apply this with $x=h^2\bigl(\frac16(|Y_\rho|)_{\rho\rho}-\frac1{24}|Y_\rho|^3\kappa^2\bigr)+O(h^4)$ and obtain
\begin{equation} \label{eq:cord expand}
 \frac{2h}{|\vec h_j[Y_h]|+|\vec h_{j+1}[Y_h]|} =\frac1{|Y_\rho|}-\frac{h^2}{|Y_\rho|^2}\Bigl(\frac16(|Y_\rho|)_{\rho\rho}-\frac1{24}|Y_\rho|^3\kappa^2\Bigr)+O(h^4).
\end{equation}
Multiplying this by \eqref{eq:nodal-num} gives
\begin{equation*}
 \begin{aligned}
 \widetilde N_h[Y_h](\rho_j) ={}&\Bigl(RY_\rho+\frac{h^2}{6}RY_{\rho\rho\rho}+O(h^4)\Bigr)\\
 &\times\Bigl(\frac1{|Y_\rho|}-\frac{h^2}{|Y_\rho|^2}\Bigl(\frac16(|Y_\rho|)_{\rho\rho}-\frac1{24}|Y_\rho|^3\kappa^2\Bigr)+O(h^4)\Bigr)\\
 ={}&\frac{RY_\rho}{|Y_\rho|} +h^2\Bigl(\frac{RY_{\rho\rho\rho}}{6|Y_\rho|} -\frac{RY_\rho}{|Y_\rho|^2}\Bigl(\frac16(|Y_\rho|)_{\rho\rho}-\frac1{24}|Y_\rho|^3\kappa^2\Bigr)\Bigr)+O(h^4)\\
 ={}&\vec n +h^2\Bigl(\frac{RY_{\rho\rho\rho}}{6|Y_\rho|} -\Bigl(\frac{(|Y_\rho|)_{\rho\rho}}{6|Y_\rho|}-\frac1{24}|Y_\rho|^2\kappa^2\Bigr)\vec n\Bigr)+O(h^4),
 \end{aligned}
\end{equation*}
where we used $RY_\rho=|Y_\rho|\vec n$ by \eqref{eq:Y1}. By \eqref{eq:Y3},
\begin{equation*}
 \frac{RY_{\rho\rho\rho}}{6|Y_\rho|} =\Bigl(\frac{(|Y_\rho|)_{\rho\rho}}{6|Y_\rho|}-\frac16|Y_\rho|^2\kappa^2\Bigr)\vec n +\Bigl(\frac12(|Y_\rho|)_\rho\kappa+\frac16|Y_\rho|\kappa_\rho\Bigr)\vec\tau .
\end{equation*}
Substituting this, the $(|Y_\rho|)_{\rho\rho}$-terms cancel, and we obtain
\begin{equation*}
 \widetilde N_h[Y_h](\rho_j) =\vec n+h^2\Bigl(\Bigl(\frac12(|Y_\rho|)_\rho\kappa+\frac16|Y_\rho|\kappa_\rho\Bigr)\vec\tau -\frac18|Y_\rho|^2\kappa^2\vec n\Bigr)+O(h^4),
\end{equation*}
which proves \eqref{eq:nodal-frame-expansion} at the nodes. And \eqref{eq:nodal-tangent-expansion} can be proved similarly using $T_h[Y_h] = -R\widetilde{N}_h[Y_h]/|\widetilde{N}_h[Y_h]|$.
\end{proof}

\begin{lemma}[Expansion of the mass-lumping]\label{lem:weakquadrature, mass-lumping}
For $u\in W^{4,\infty}(I)$ and $\phi_h\in S_h$,
\begin{equation}\label{eq:mass-expansion}
 \begin{aligned}
 &\Bigg|\mass{Y_h}{u_h}{\phi_h} -\int_I |Y_\rho|(u\cdot\vec n)\vec n\cdot\phi_h\,\dd\rho\\
 &\quad-\frac{h^2}{12}\int_I\Bigg[-\bigl(|Y_\rho|(u\cdot\vec n)\vec n\bigr)_{\rho\rho} + 2\bigl(|Y_\rho|\vec n\otimes\vec n\bigr)_{\rho\rho}u\\
 &\qquad\qquad +\frac{|Y_\rho|^3\kappa^2}{2} \bigl(9(u\cdot\vec n)\vec n-8u\bigr) \Bigg]\cdot\phi_h\,\dd\rho\Bigg|\\
 &\qquad\leq h^4C\!\left(c^{-1},\norm Y{W^{6,\infty}(I)}\right) \norm u{W^{4,\infty}(I)}\norm{\phi_h}{0,h}.
 \end{aligned}
\end{equation}
where $u_h = I_h u$.
\end{lemma}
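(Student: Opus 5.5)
The plan is to reduce the mass-lumped form on $\Gamma[Y_h]$ to an interval mass-lumping of a smooth nodal function and then apply Lemma~\ref{lem:weakquadrature}. By definition,
\begin{equation*}
 \mass{Y_h}{u_h}{\phi_h}=\sum_j\frac{|\vec h_j[Y_h]|+|\vec h_{j+1}[Y_h]|}{2}\,(u(\rho_j)\cdot\widetilde N_h[Y_h](\rho_j))\,\widetilde N_h[Y_h](\rho_j)\cdot\phi_h(\rho_j),
\end{equation*}
after reindexing the edge sum so that each node carries the averaged length of its two adjacent edges. This equals $\int_I\Ih[g_h\cdot\phi_h]\dd\rho$ with nodal values
\begin{equation*}
 g_h(\rho_j)=\frac{|\vec h_j[Y_h]|+|\vec h_{j+1}[Y_h]|}{2h}\,(u\cdot\widetilde N_h)\widetilde N_h\big|_{\rho_j}.
\end{equation*}

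First I expand $g_h(\rho_j)$ to order $h^2$ with an $O(h^4)$ remainder. For the length factor I use \eqref{eq:nodal-den}:
\begin{equation*}
 \frac{|\vec h_j|+|\vec h_{j+1}|}{2h}=|Y_\rho|+h^2\Bigl(\tfrac16(|Y_\rho|)_{\rho\rho}-\tfrac1{24}|Y_\rho|^3\kappa^2\Bigr)+O(h^4).
\end{equation*}
For the normal I use \eqref{eq:nodal-frame-expansion}: $\widetilde N_h=\vec n+h^2(A\vec\tau-\tfrac18|Y_\rho|^2\kappa^2\vec n)+O(h^4)$, where $A=\tfrac12(|Y_\rho|)_\rho\kappa+\tfrac16|Y_\rho|\kappa_\rho$. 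Multiplying these out gives $g_h(\rho_j)=g(\rho_j)+O(h^4)\|u\|_{L^\infty}$ with
\begin{equation*}
 g=|Y_\rho|(u\cdot\vec n)\vec n+h^2\Bigl[\bigl(\tfrac16(|Y_\rho|)_{\rho\rho}-\tfrac1{24}|Y_\rho|^3\kappa^2-\tfrac14|Y_\rho|^3\kappa^2\bigr)(u\cdot\vec n)\vec n+|Y_\rho|A\bigl((u\cdot\vec\tau)\vec n+(u\cdot\vec n)\vec\tau\bigr)\Bigr].
\end{equation*}
The $O(h^4)$ nodal remainder contributes at most $Ch^4\|u\|\,\|\phi_h\|_{0,h}$ by Cauchy--Schwarz.

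Next I apply Lemma~\ref{lem:weakquadrature} with $f=g$. The leading term yields $\int_I|Y_\rho|(u\cdot\vec n)\vec n\cdot\phi_h$ minus $\frac{h^2}{12}\int_I(|Y_\rho|(u\cdot\vec n)\vec n)_{\rho\rho}\cdot\phi_h$, with an $O(h^4)$ remainder controlled by $\|Y\|_{W^{6,\infty}}$ and $\|u\|_{W^{4,\infty}}$. This produces the first bracket term in \eqref{eq:mass-expansion}. For the $h^2$ part of $g$, the plain integral suffices, since its quadrature correction is already $O(h^4)$.

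The main obstacle is an algebraic identity check. I must verify that
\begin{equation*}
 12\times[\text{the }h^2\text{ coefficient of }g]=2(|Y_\rho|\vec n\otimes\vec n)_{\rho\rho}u+\tfrac{|Y_\rho|^3\kappa^2}{2}\bigl(9(u\cdot\vec n)\vec n-8u\bigr).
\end{equation*}
To do this I expand $(|Y_\rho|\vec n\otimes\vec n)_{\rho\rho}$ using $\vec n_\rho=|Y_\rho|\kappa\vec\tau$ and $\vec\tau_\rho=-|Y_\rho|\kappa\vec n$ from Lemma~\ref{lem:Y-derivatives}. The result is
\begin{equation*}
 (|Y_\rho|)_{\rho\rho}\vec n\otimes\vec n+(|Y_\rho|^2\kappa)_\rho\,\mathrm{Sym}+|Y_\rho|\bigl[(|Y_\rho|\kappa)_\rho\,\mathrm{Sym}+2|Y_\rho|^2\kappa^2(\vec\tau\otimes\vec\tau-\vec n\otimes\vec n)\bigr],
\end{equation*}
where $\mathrm{Sym}=\vec n\otimes\vec\tau+\vec\tau\otimes\vec n$. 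Doubling this, writing $u=(u\cdot\vec n)\vec n+(u\cdot\vec\tau)\vec\tau$, and collecting coefficients of $\vec n\otimes\vec n$, $\mathrm{Sym}$ and $\vec\tau\otimes\vec\tau$, I compare term by term.

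Already the $\vec n\otimes\vec n$ coefficient $2(|Y_\rho|)_{\rho\rho}$ fails to match the $h^2$-coefficient $12\cdot\tfrac16(|Y_\rho|)_{\rho\rho}=2(|Y_\rho|)_{\rho\rho}$ only if miscounted. Both are $2$, and the $\kappa^2$-coefficients $-\tfrac72$ and $-4+\tfrac92=\tfrac12$ require care. I expect a discrepancy to be absorbed by a slightly different choice of $A$, or by a tangential $\kappa^2$ term of the form $-4|Y_\rho|^3\kappa^2\vec\tau\otimes\vec\tau$ on both sides. I would redo the Taylor expansion of $\widetilde N_h$, including its $|\widetilde N_h|$ normalization, until the identity closes.

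If it does not close, the bracket in \eqref{eq:mass-expansion} may have been organized differently, for instance by integrating by parts. In that case I would match it after moving derivatives onto $\phi_h$, which is legitimate because $\phi_h\in S_h$ and $f\in W^{4,\infty}$.
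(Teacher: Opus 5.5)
Your route is the paper's: write the lumped form as $\int_I I_h[g_h\cdot\phi_h]\,\mathrm d\rho$, expand the nodal values of $g_h$ to $O(h^4)$, and apply Lemma~\ref{lem:weakquadrature} to the leading part $|Y_\rho|(u\cdot\vec n)\vec n$. The paper saves one expansion by using the exact identity $\frac{|\vec h_j|+|\vec h_{j+1}|}{2h}\widetilde N_h[Y_h](\rho_j)=\frac{R(Y(\rho_{j+1})-Y(\rho_{j-1}))}{2h}=RY_\rho+\frac{h^{2}}{6}RY_{\rho\rho\rho}+O(h^4)$. Your product of \eqref{eq:nodal-den} with two copies of \eqref{eq:nodal-frame-expansion} gives the same result.

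Write $L=|Y_\rho|$, $a=u\cdot\vec n$, $b=u\cdot\vec\tau$. Your $h^2$ coefficient of $g$ is correct, namely $G_2=\bigl(\frac{1}{6}L_{\rho\rho}-\frac{7}{24}L^3\kappa^2\bigr)a\vec n+LA(b\vec n+a\vec\tau)$. The gap is the final identity, which is exactly what produces the specific bracket in the statement. You leave it unverified, report a discrepancy, and propose removing it by changing $A$, re-normalizing $\widetilde N_h$, or integrating by parts. None of this is needed. $A$ is fixed by Lemma~\ref{lem:frameexpansion}, and no $|\widetilde N_h|$ normalization enters because the form uses the unnormalized $\widetilde N_h$. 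The stated bracket equals $12\,G_2$ pointwise. The apparent discrepancy comes from two algebra slips.

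\emph{First slip.} Set $\mathrm{Sym}=\vec n\otimes\vec\tau+\vec\tau\otimes\vec n$. Then $(\vec n\otimes\vec n)_\rho=L\kappa\,\mathrm{Sym}$ and $\mathrm{Sym}_\rho=2L\kappa(\vec\tau\otimes\vec\tau-\vec n\otimes\vec n)$, so
\[
(L\,\vec n\otimes\vec n)_{\rho\rho}=L_{\rho\rho}\,\vec n\otimes\vec n+\bigl(3LL_\rho\kappa+L^2\kappa_\rho\bigr)\mathrm{Sym}+2L^3\kappa^2(\vec\tau\otimes\vec\tau-\vec n\otimes\vec n).
\]
Your $\mathrm{Sym}$ coefficient $(L^2\kappa)_\rho+L(L\kappa)_\rho$ counts $L^2\kappa_\rho$ twice. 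If you differentiate $L^2\kappa\,\mathrm{Sym}$, the companion term is $LL_\rho\kappa\,\mathrm{Sym}$, not $L(L\kappa)_\rho\,\mathrm{Sym}$.

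\emph{Second slip.} On the target side, the $\kappa^2$ coefficient of $a\vec n$ has three contributions:
\begin{itemize}
\item $-4$ from $4L^3\kappa^2(\vec\tau\otimes\vec\tau-\vec n\otimes\vec n)u$;
\item $+\frac{9}{2}$ from the explicit $9a\vec n$ term;
\item $-4$ from the normal part of $-4L^3\kappa^2u$.
\end{itemize}
These sum to $-\frac{7}{2}=12\cdot\bigl(-\frac{7}{24}\bigr)$; you kept only one of the two $-4$'s. The $b\vec\tau$ terms $\pm4L^3\kappa^2$ cancel, consistent with $G_2$ having no $b\vec\tau$ component. Hence
\[
2(L\,\vec n\otimes\vec n)_{\rho\rho}u+\tfrac{L^3\kappa^2}{2}(9a\vec n-8u)=\bigl(2L_{\rho\rho}-\tfrac{7}{2}L^3\kappa^2\bigr)a\vec n+\bigl(6LL_\rho\kappa+2L^2\kappa_\rho\bigr)(b\vec n+a\vec\tau)=12\,G_2,
\]
since $12LA=6LL_\rho\kappa+2L^2\kappa_\rho$. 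With this the proof closes as you planned.

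One small precision: for the $h^2$ part, use the elementary bound $\bigl|\int_I I_h[f\cdot\phi_h]-\int_I f\cdot\phi_h\bigr|\le Ch^2\|f\|_{W^{2,\infty}}\|\phi_h\|_{0,h}$ rather than Lemma~\ref{lem:weakquadrature}. The reason is that $G_2$ contains $\kappa_\rho$, so it is not in $W^{4,\infty}$ when only $Y\in W^{6,\infty}$ is assumed.
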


\begin{proof}
First, use $\tilde{N}_h[Y_h](\rho_j) = \frac{Y(\rho_{j+1}) - Y(\rho_{j-1})}{|\vec{h}_{j-1}| + |\vec{h}_j|}$, we rewrite the mass-lumping term as an integration over $I$:
\begin{equation}
 \begin{aligned}
 &\mass{Y_h}{u_h}{\phi_h}\\
 & = h\sum_j I_h((u_h \cdot \tilde{N}_h[Y_h])( \phi_h \cdot \tilde{N}_h[Y_h])) (\rho_j) \frac{|\vec{h}_{j-1}| + |\vec{h}_{j}|}{2h} \\
 & = h\sum_j I_h\left(\left(u \cdot \frac{R(Y(\rho_{j+1}) - Y(\rho_{j-1}))}{2h}\tilde{N}_h[Y_h]\right) \cdot \phi_h \right)(\rho_j)
 \end{aligned}
\end{equation}
We have already known that
\begin{equation}
 \begin{aligned}
 &\frac{R(Y(\rho_{j+1}) - Y(\rho_{j-1}))}{2h}=  |Y_\rho|\vec{n}(\rho_j) + \frac{h^2}{6} RY_{\rho\rho\rho}(\rho_j) + O(h^4) \\
 &\widetilde{N}_{h}[Y_h] = \vec n+h^2\Bigl(\Bigl(\frac12(|Y_\rho|)_\rho\kappa+\frac16|Y_\rho|\kappa_\rho\Bigr)\vec\tau -\frac18|Y_\rho|^2\kappa^2\vec n\Bigr)+O(h^4).
 \end{aligned}
\end{equation}
Therefore, $u \cdot \frac{Y(\rho_{j+1}) - Y(\rho_{j-1})}{2h}\tilde{N}_h[Y_h]$ can be expanded as
\begin{equation}
 \begin{aligned}
 &u \cdot \frac{R(Y(\rho_{j+1}) - Y(\rho_{j-1}))}{2h}\tilde{N}_h[Y_h] - |Y_\rho|\left(u \cdot \vec{n}\right) \vec{n}\\
 & =  h^2\biggl( \left(\frac{1}{6}\left(u \cdot RY_{\rho\rho\rho}\right)- \frac{1}{8}u \cdot \vec{n} |Y_\rho|^3 \kappa^2\right) \vec{n}\\
 &\qquad + (u\cdot \vec{n})\Bigl(\frac{|Y_\rho|}{2}(|Y_\rho|)_\rho\kappa+\frac16|Y_\rho|^2\kappa_\rho\Bigr)\vec\tau\biggr) + O(h^4)
 \end{aligned}
\end{equation}

Finally, take $f = |Y_\rho| (u\cdot \vec{n}) \vec{n}$ in \eqref{eq:interval-mass-expansion}, we derive that
\begin{equation}
 \begin{aligned}
 &\int_{I} I_h(|Y_\rho|(u\cdot \vec{n})(\vec{n} \cdot \phi_h)) \dd \rho\\
 &\qquad = \int_{I} |Y_\rho|(u\cdot \vec{n})(\vec{n} \cdot \phi_h) - \frac{h^2}{12} \int_{I} (|Y_\rho| (u\cdot \vec{n}) \vec{n})_{\rho\rho} \phi_h + O(h^4).
 \end{aligned}
\end{equation}
Combine everything together and use \eqref{eq:Y3} proves \eqref{eq:mass-expansion}.
\end{proof}

\begin{lemma}[Expansion of the stiffness]\label{lem:weakflux}
For $u\in W^{6, \infty}(I)$, $\phi_h\in [S_h]^2$, we have
\begin{equation}\label{eq:stiffness-expansion}
 \begin{aligned}
 &\biggl|\stiff{Y_h}{\Ih u}{\phi_h} -\int_I\biggl(\frac{u_\rho}{|Y_\rho|}\\
 &\qquad +h^2\left(\frac{(|Y_\rho|)_\rho}{12|Y_\rho|^2}u_{\rho\rho} +\frac{|Y_{\rho\rho}|^2-3((|Y_\rho|)_\rho)^2} {24|Y_\rho|^3}u_\rho\right)\biggr)\cdot\phi_{h,\rho}\dd\rho\biggr|\\
 &\qquad\leq h^4C\!\left(c,\norm Y{ W^{6, \infty}(I)}\right) \norm u{ W^{6, \infty}(I)}\norm{\phi_h}{0,h}.
 \end{aligned}
\end{equation}
\end{lemma}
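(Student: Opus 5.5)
The stiffness form is piecewise constant on each element, so it reduces to an exact sum over edges:
\begin{equation*}
 \stiff{Y_h}{\Ih u}{\phi_h}=\sum_j \frac{h^2}{|\vec h_j[Y_h]|}\,D_hu_j\cdot D_h\phi_{h,j}
 =\sum_j\int_{I_j} g_j\cdot\phi_{h,\rho}\dd\rho,\qquad g_j:=\frac{h\,D_hu_j}{|\vec h_j[Y_h]|}.
\end{equation*}
Here I use that $(\phi_h)_\rho\equiv D_h\phi_{h,j}$ on $I_j$.

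\textbf{Step 1: expand $g_j$ at the midpoint.} Set $\rho_{j-1/2}=\rho_j-h/2$. Taylor expansion gives
\begin{equation*}
 D_hu_j=\Bigl(u_\rho+\tfrac{h^2}{24}u_{\rho\rho\rho}\Bigr)(\rho_{j-1/2})+O(h^4)\|u\|_{W^{5,\infty}}.
\end{equation*}
By \eqref{eq:Y at mid} together with $\vec\tau\cdot Y_{\rho\rho\rho}=(|Y_\rho|)_{\rho\rho}-|Y_\rho|^3\kappa^2$ from \eqref{eq:Y3},
\begin{equation*}
 \frac{|\vec h_j[Y_h]|}{h}=|Y_\rho|+\frac{h^2}{24}\bigl((|Y_\rho|)_{\rho\rho}-|Y_\rho|^3\kappa^2\bigr)+O(h^4)\quad\text{at }\rho_{j-1/2}.
\end{equation*}
Inverting as in \eqref{eq:cord expand}, I get $g_j=G(\rho_{j-1/2})+O(h^4)$ with
\begin{equation*}
 G=\frac{u_\rho}{|Y_\rho|}+\frac{h^2}{24}\Bigl(\frac{u_{\rho\rho\rho}}{|Y_\rho|}-\frac{(|Y_\rho|)_{\rho\rho}-|Y_\rho|^3\kappa^2}{|Y_\rho|^2}\,u_\rho\Bigr).
\end{equation*}

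\textbf{Step 2: midpoint to element average.} I compare the midpoint value with the element average: $\frac1h\int_{I_j}G_0\dd\rho=G_0(\rho_{j-1/2})+\frac{h^2}{24}G_{0,\rho\rho}(\rho_{j-1/2})+O(h^4)$, where $G_0=u_\rho/|Y_\rho|$. Since $\phi_{h,\rho}$ is constant on $I_j$, this gives
\begin{equation*}
 \stiff{Y_h}{\Ih u}{\phi_h}=\int_I\Bigl(G-\frac{h^2}{24}G_{0,\rho\rho}\Bigr)\cdot\phi_{h,\rho}\dd\rho+\sum_j hO(h^4)|D_h\phi_{h,j}|.
\end{equation*}

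\textbf{Step 3: identify the $h^2$ coefficient.} Expand $\bigl(u_\rho/|Y_\rho|\bigr)_{\rho\rho}$. The $u_{\rho\rho\rho}/|Y_\rho|$ terms cancel, leaving terms in $u_{\rho\rho}$ and $u_\rho$. Then use $|Y_{\rho\rho}|^2=((|Y_\rho|)_\rho)^2+|Y_\rho|^4\kappa^2$ from \eqref{eq:Y2} to eliminate $\kappa^2$. The $(|Y_\rho|)_{\rho\rho}$ terms also cancel: $-\frac1{24}$ from $G$ against $+\frac1{24}$ from $-G_{0,\rho\rho}/24$. What remains is $\frac{(|Y_\rho|)_\rho}{12|Y_\rho|^2}u_{\rho\rho}+\frac{|Y_{\rho\rho}|^2-3((|Y_\rho|)_\rho)^2}{24|Y_\rho|^3}u_\rho$. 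This is routine bookkeeping, and I will verify it term by term.

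\textbf{Step 4: the remainder (main obstacle).} The naive bound on the remainder is $Ch^4|\phi_h|_{1,h}$, but the statement asks for $\norm{\phi_h}{0,h}$. I will sum by parts: write the remainder as $\sum_j r_j\cdot h D_h\phi_{h,j}$ and move the difference onto $r_j$, obtaining $-\sum_j h\,(D_hr)_{j}\cdot\phi_{h,j}$. This requires that the $O(h^4)$ remainders $r_j$ themselves have difference quotients of size $O(h^4)$. They are smooth functions of $\rho_{j-1/2}$: all terms are Taylor remainders expressible as integrals of derivatives of $u$ and $Y$ in the form of Lemma~\ref{lem:referencegeometry}, hence smooth in the shift. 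Differentiating them costs one extra derivative, which is why $u\in W^{6,\infty}$ and $Y\in W^{6,\infty}$ are required. I will therefore write each remainder as $h^4 R(\rho_{j-1/2};h)$ with $|\partial_\rho R|$ bounded, and conclude the estimate with $\norm{\phi_h}{0,h}$.
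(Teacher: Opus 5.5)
Your proposal is correct. Steps 1--3 follow the paper's proof essentially line by line: the midpoint expansion of the edge quotient using \eqref{eq:Y at mid}, the midpoint-to-average correction $-\frac{h^2}{24}(u_\rho/|Y_\rho|)_{\rho\rho}$, and the elimination of $\kappa^2$ via $|Y_{\rho\rho}|^2=((|Y_\rho|)_\rho)^2+|Y_\rho|^4\kappa^2$.

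Where you go beyond the paper is Step 4. The paper collects $O(h^4)$ element remainders tested against $\phi_{h,\rho}$ and then just says ``summing over $j$''. On its own that only gives $Ch^4|\phi_h|_{1,h}$. Your discrete summation by parts, which moves $D_h$ onto the remainders (up to a harmless index shift), is what actually justifies $\norm{\phi_h}{0,h}$ on the right-hand side. It also accounts for the $W^{6,\infty}$ hypothesis.

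When you carry Step 4 out, bound the remainders separately rather than as one fourth-order Taylor remainder in $h$:
\begin{itemize}
\item The quotient remainder $g_j-G(\rho_{j-1/2})$ involves fifth derivatives of $u$ and $Y$, and sixth after differentiating in the shift.
\item The $h^2$-level terms of $G$ and $G_{0,\rho\rho}$ are built from third derivatives. They need only an $O(h^2)$ midpoint-to-average remainder, which brings in two more derivatives, plus one for the shift.
\end{itemize}
If you instead expanded the whole element error to fourth order in $h$, those $h^2$ terms would produce $\partial_\rho^7u$, and $W^{6,\infty}$ would no longer suffice.
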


\begin{proof}
First, we rewrite the stiffness term as an integration over $I$:
\begin{equation}
 \begin{aligned}
 \stiff{Y_h}{\Ih u}{\phi_h} &= \sum_j \frac{1}{h}\int_{I_{j+1}}\frac{u(\rho_{j+1}) - u(\rho_{j})}{|Y(\rho_{j+1})-Y(\rho_{j})|}\\
 &\qquad \cdot \left(\phi_h(\rho_{j+1}) - \phi_h(\rho_{j})\right)\dd\rho.
 \end{aligned}
\end{equation}
Taylor expansion at $\rho_j+h/2$, gives
\begin{equation}
 \begin{aligned}
 \frac{1}{h}(u(\rho_{j+1}) - u(\rho_{j})) &= u_\rho(\rho_{j} + h/2)+h^2\frac{1}{24}u_{\rho\rho\rho}(\rho_{j} + h/2)+O(h^4),\\
 \frac{1}{h}(Y(\rho_{j+1}) - Y(\rho_{j})) &= Y_\rho(\rho_{j} + h/2)+h^2\frac{1}{24}Y_{\rho\rho\rho}(\rho_{j} + h/2)+O(h^4).
 \end{aligned}
\end{equation}
As in the proof of Lemma~\ref{lem:frameexpansion}, $|Y_\rho|\ge c$ and \eqref{eq:Y3} give
\begin{equation*}
 \Bigl|Y_\rho+\frac{h^2}{24}Y_{\rho\rho\rho}\Bigr| =|Y_\rho|+\frac{h^2}{24}\Bigl((|Y_\rho|)_{\rho\rho}-|Y_\rho|^3\kappa^2\Bigr)+O(h^4).
\end{equation*}
Thus
\begin{equation*}
 \begin{aligned}
 \frac{u(\rho_{j+1}) - u(\rho_{j})}{|Y(\rho_{j+1})-Y(\rho_{j})|} &=\frac{u_\rho}{|Y_\rho|}(\rho_{j} + h/2)\\
 &\quad +\frac{h^2}{24}\Bigl(\frac{u_{\rho\rho\rho}}{|Y_\rho|} -\frac{(|Y_\rho|)_{\rho\rho}}{|Y_\rho|^2}u_\rho+|Y_\rho|\kappa^2u_\rho\Bigr)(\rho_{j} + h/2)+O(h^4).
 \end{aligned}
\end{equation*}

Next, for a smooth function $f$ and a constant $g$ on $I_{j+1}$, Taylor expansion at $\rho_j+h/2$ gives
\begin{equation*}
 \frac1h\int_{I_{j+1}}f\,g\dd\rho=\frac{1}{h}\int_{I_{j+1}}\left(f(\rho_{j} + h/2)g+\frac{h^2}{24}f_{\rho\rho}(\rho)g\right)\dd\rho+O(h^4)\left\|f^{(5)}g\right\|_{L^{1}(I_{j+1})}.
\end{equation*}
Applying this to $f = u_\rho/|Y_\rho|$ and to $g = \phi_h(\rho_{j+1}) - \phi_h(\rho_{j}) = h \phi_{h, \rho}|_{I_{j+1}}$ yields
\begin{equation}
 \begin{aligned}
 &\frac{1}{h}\int_{I_{j+1}}\frac{u(\rho_{j+1}) - u(\rho_{j})}{|Y(\rho_{j+1})-Y(\rho_{j})|} \cdot \left(\phi_h(\rho_{j+1}) - \phi_h(\rho_{j})\right)\dd\rho \\
 & = \int_{I_{j+1}}\left(\frac{u_\rho}{|Y_\rho|} - \frac{h^2}{24} \left(\frac{u_{\rho}}{|Y_\rho|}\right)_{\rho\rho}\right) \cdot \phi_{h, \rho}\dd\rho + O(h^4) \\
 & \qquad + \int_{I_{j+1}}\frac{h^2}{24} \left(\frac{u_{\rho\rho\rho}}{|Y_\rho|} - \frac{(|Y_\rho|)_{\rho\rho}}{|Y_\rho|^2}u_\rho + |Y_\rho|\kappa^2u_\rho\right) \cdot \phi_{h, \rho}\dd\rho\\
 & = \int_{I_{j+1}}\biggl(\frac{u_\rho}{|Y_\rho|} + \frac{h^2}{24} \biggl(\frac{u_{\rho\rho\rho}}{|Y_\rho|} - \frac{(|Y_\rho|)_{\rho\rho}}{|Y_\rho|^2}u_\rho\\
 & \qquad\qquad + |Y_\rho|\kappa^2u_\rho - \left(\frac{u_{\rho}}{|Y_\rho|}\right)_{\rho\rho}\biggr)\biggr) \cdot \phi_{h, \rho}\dd\rho + O(h^4) \\
 & = \int_{I_{j+1}}\biggl(\frac{u_\rho}{|Y_\rho|} + \frac{h^2}{24} \biggl( |Y_\rho|\kappa^2u_\rho+\frac{2(|Y_\rho|)_\rho}{|Y_\rho|^2}u_{\rho\rho}\\
 & \qquad\qquad -\frac{2((|Y_\rho|)_\rho)^2}{|Y_\rho|^3}u_\rho\biggr)\biggr) \cdot \phi_{h, \rho}\dd\rho + O(h^4).
 \end{aligned}
\end{equation}
In the last equality we used $\Bigl(\frac{u_\rho}{|Y_\rho|}\Bigr)_{\rho\rho} =\frac{u_{\rho\rho\rho}}{|Y_\rho|}-\frac{2(|Y_\rho|)_\rho}{|Y_\rho|^2}u_{\rho\rho} +\frac{2((|Y_\rho|)_\rho)^2}{|Y_\rho|^3}u_\rho-\frac{(|Y_\rho|)_{\rho\rho}}{|Y_\rho|^2}u_\rho$. Summing over $j$, we obtain the desired result.
\end{proof}

\begin{lemma}[Tangential expansion of the stiffness, I]\label{lem:tangentexpansion}
For $\beta_h\in S_h$,
\begin{equation}\label{eq:tangential-geometric-expansion}
 \begin{aligned}
 &\biggl|\stiff{Y_h}{Y_h}{\Ih[\beta_hT_h[Y_h]]} +h^2\int_I \frac14|Y_\rho|(|Y_\rho|)_\rho\kappa^2\beta_h\dd\rho\\
 &\qquad - h^4 \int_I S_4[Y]\beta_h\dd\rho \biggr| \leq h^6C\!\left(c^{-1},\norm Y{W^{6,\infty}(I)}\right)\norm{\beta_h}{0,h},
 \end{aligned}
\end{equation}
where $S_4[Y]$ is the following fourth order tangential stiffness coefficient:
\begin{equation}\label{eq:tangential-geometric-coefficient-four}
 \begin{aligned}
 S_4[Y]={}& \frac{|Y_\rho|^3\kappa^3}{96} \bigl((|Y_\rho|)_\rho\kappa+2|Y_\rho|\kappa_\rho\bigr)+\frac{(|Y_\rho|)_{\rho\rho}\kappa}{48} \bigl((|Y_\rho|)_\rho\kappa+4|Y_\rho|\kappa_\rho\bigr)\\
 \qquad &+\frac{(|Y_\rho|)_\rho\kappa_\rho}{24} \bigl(|Y_\rho|\kappa_\rho-(|Y_\rho|)_\rho\kappa\bigr),
 \end{aligned}
\end{equation}
\end{lemma}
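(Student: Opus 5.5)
The starting point is the exact nodal identity \eqref{eq:equidistribution} from the proof of Lemma~\ref{lem:force}, applied with $Z_h=Y_h$:
\begin{equation*}
 \stiff{Y_h}{Y_h}{\Ih[\beta_hT_h[Y_h]]}=\sum_j\beta_{h,j}\,a\bigl(v_j,w_j\bigr),
\end{equation*}
with $v_j=(\vec h_{j+1}[Y_h]+\vec h_j[Y_h])/h=(Y(\rho_{j+1})-Y(\rho_{j-1}))/h$ and $w_j=(\vec h_{j+1}[Y_h]-\vec h_j[Y_h])/h=(Y(\rho_{j+1})-2Y(\rho_j)+Y(\rho_{j-1}))/h$.

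Since $a$ is homogeneous of degree zero, the scaling by $h$ is irrelevant. We have $v_j=2Y_\rho+O(h^2)$ and $w_j=hY_{\rho\rho}+\frac{h^3}{12}Y_{\rho\rho\rho\rho}+O(h^5)$. Because $a(v,w)=O(|w|^3)$, each nodal term is $O(h^3)$, and we need its expansion up to relative order $h^4$, i.e. absolute error $O(h^7)$. Then $\sum_j\beta_{h,j}O(h^7)=h^{-1}\cdot h\sum_j\beta_{h,j}O(h^7)\le Ch^6\norm{\beta_h}{0,h}$, which gives the required remainder. The sum $\sum_j\beta_{h,j}F(\rho_j)$ is then converted to an integral: $\sum_j\beta_{h,j}F(\rho_j)=h^{-1}\int_I\Ih[F\beta_h]\dd\rho$, and Lemma~\ref{lem:weakquadrature} turns this into $h^{-1}\int_I F\beta_h\dd\rho-\frac{h}{12}\int_I F_{\rho\rho}\beta_h\dd\rho$ up to $O(h^3\norm F{W^{4,\infty}}\norm{\beta_h}{0,h})$. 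With $F=h^3F_3+h^5F_5+O(h^7)$, the $h^2$ coefficient is $F_3$ and the $h^4$ coefficient is $F_5-\frac1{12}(F_3)_{\rho\rho}$.

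To compute $F$, I would use the closed form
\begin{equation*}
 a(v,w)=\frac{|v-w|-|v+w|}{4|v|}\Bigl|\frac{v-w}{|v-w|}-\frac{v+w}{|v+w|}\Bigr|^2 .
\end{equation*}
It is cleaner to go back to edge quantities: $a=(\vec\tau_j-\vec\tau_{j+1})\cdot T_h(\rho_j)$, where $\vec\tau_j=\vec\tau_h$ at the midpoint $\rho_j-h/2$ up to $O(h^2)$ corrections. Both expansions already appear in the proofs of Lemmas~\ref{lem:frameexpansion} and~\ref{lem:weakflux}: the edge tangent is
\begin{equation*}
 \frac{Y_\rho+\frac{h^2}{24}Y_{\rho\rho\rho}+\frac{h^4}{1920}\partial_\rho^5Y}{\bigl|\cdot\bigr|}
\end{equation*}
evaluated at $\rho_j\pm h/2$, and $T_h[Y_h]$ comes from \eqref{eq:nodal-tangent-expansion}, pushed to one further order. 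I would parametrize everything in the frame $(\vec\tau,\vec n)$ at $\rho_j$. I would write $\vec\tau_{j+1}-\vec\tau_j$ as an odd expansion in $h/2$ of a function $g(\rho)=\vec\tau+\frac{h^2}{24}(\ldots)+\cdots$, namely $g(\rho_j+\frac h2)-g(\rho_j-\frac h2)=hg_\rho+\frac{h^3}{24}g_{\rho\rho\rho}+\frac{h^5}{1920}g^{(5)}$, and then dot this with $T_h(\rho_j)$. Since $\vec\tau_\rho\cdot\vec\tau=0$, the leading order $h$ cancels identically, consistent with $a=O(h^3)$. The $h^3$ term then comes from $\vec\tau\cdot\vec\tau_{\rho\rho\rho}$ together with the $h^2$ corrections of $g$ and of $T_h$ paired against $\vec\tau_\rho=-|Y_\rho|\kappa\vec n$, using Lemma~\ref{lem:Y-derivatives}. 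Carrying this out should produce $F_3=-\frac14|Y_\rho|(|Y_\rho|)_\rho\kappa^2$, which gives the stated $h^2$ term with its sign.

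The main obstacle is the $h^4$ coefficient. It requires the $O(h^4)$ term of $T_h[Y_h]$ (an extension of Lemma~\ref{lem:frameexpansion}), the fifth-order terms of $g$, and the quadrature correction $-\frac1{12}(F_3)_{\rho\rho}$. All of these must then be simplified, using the Frenet relations and repeated derivatives of \eqref{eq:Y1}--\eqref{eq:Y3}, to the invariant form $S_4[Y]$. I would organize this computation symbolically. A strong check is available: $S_4$ must vanish when $(|Y_\rho|)_\rho=0$ and $\kappa$ is constant (a uniformly parametrized circle, where all the $a$ terms are equal and the tangential forces cancel by symmetry). The terms cubic in $\kappa$ also need care, since odd-order contributions must cancel by the $\rho\mapsto-\rho$ symmetry of the nodal stencil.

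Regularity $Y\in W^{6,\infty}$ suffices because the error terms involve at most six derivatives of $Y$. This, together with the uniform lower bounds $|v|\ge c$ and $|w|\le Ch$ and the derivative bounds on $a$ from the proof of Lemma~\ref{lem:force}, justifies the Taylor remainders uniformly in $j$.
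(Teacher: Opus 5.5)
\textbf{There is a genuine gap.} Several parts of your outline are correct: the exact nodal identity, the per-node target $O(h^7)$, the conversion of nodal sums via Lemma~\ref{lem:weakquadrature} (so the $h^4$ coefficient is $F_5-\frac{1}{12}(F_3)_{\rho\rho}$), and your value $F_3=-\frac14|Y_\rho|(|Y_\rho|)_\rho\kappa^2$. The gap is in the part that carries the content of the lemma. You never derive $S_4[Y]$, and the route you choose for it does not yield a constant depending only on $\norm{Y}{W^{6,\infty}(I)}$.

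\textbf{The missing idea: the factorization you wrote down and then set aside.} Since $v_j-w_j=2\vec h_j/h$ and $v_j+w_j=2\vec h_{j+1}/h$, your closed form of $a$ is exactly
\[
 a(v_j,w_j)=-\frac{\bigl(|\vec h_{j+1}|-|\vec h_j|\bigr)\,|\vec\tau_{j+1}-\vec\tau_j|^2}{2\,|\vec h_j+\vec h_{j+1}|},
\]
and this is the form the paper expands. Each of the three factors is then needed only to relative order $h^2$, with an $O(h^4)$ remainder involving at most $\partial_\rho^6Y$. Writing $\ell=|Y_\rho|$, up to $O(h^4)$,
\[
 \frac{2h}{|\vec h_j+\vec h_{j+1}|}=\frac{1}{\ell}-\frac{h^2}{6\ell^2}\bigl(\ell_{\rho\rho}-\ell^3\kappa^2\bigr),\qquad \frac{|\vec h_{j+1}|-|\vec h_j|}{h^2}=\ell_\rho+\frac{h^2}{24}\bigl(2\ell_{\rho\rho\rho}-(\ell^3\kappa^2)_\rho\bigr),
\]
\[
 \frac{|\vec\tau_{j+1}-\vec\tau_j|^2}{h^2}=\ell^2\kappa^2+\frac{h^2}{12}\bigl(4\ell\ell_{\rho\rho}\kappa^2+6\ell\ell_\rho\kappa\kappa_\rho+2\ell^2\kappa\kappa_{\rho\rho}-\ell^4\kappa^4\bigr).
\]
Their product is $G_0+h^2G_2+O(h^4)$ with $G_0=\ell\ell_\rho\kappa^2$. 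In your notation $F_3=-\frac14G_0$ and $F_5=-\frac14G_2$, so $S_4=-\frac14\bigl(G_2-\frac{1}{12}(G_0)_{\rho\rho}\bigr)$, which expands term by term to the stated formula. This route needs no $h^4$ term of $T_h[Y_h]$ and no fifth-order terms of the edge tangents.

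Your proposed sanity check does not test the coefficients: every term of $S_4$ contains $\ell_\rho$, $\ell_{\rho\rho}$ or $\kappa_\rho$, so it vanishes on a uniform circle automatically. A nontrivial check is the constant-speed case $\ell_\rho=0$. There the chord length $|\vec h_j|=h\ell\bigl(1-\frac{1}{24}h^2\ell^2\kappa^2+\cdots\bigr)$ gives $S_4=\frac{1}{48}\ell^4\kappa^3\kappa_\rho$ directly.

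\textbf{Why the direct route fails under $W^{6,\infty}$.} You expand $T_h[Y_h](\rho_j)$ and $\vec\tau_{j+1}-\vec\tau_j$ separately and then dot them. Because $(\vec\tau_{j+1}-\vec\tau_j)\cdot\vec n=O(h)$, you need:
\begin{itemize}
\item $T_h[Y_h]\cdot\vec n$ to $O(h^6)$;
\item the component of $\vec\tau_{j+1}-\vec\tau_j$ along $\vec\tau(\rho_j)$ to $O(h^7)$.
\end{itemize}
The Taylor remainders in these expansions involve $\partial_\rho^7Y$ and $\partial_\rho^8Y$. For instance, the remainder after $\frac{h^5}{1920}g^{(5)}$ is $O(h^7\norm{\partial_\rho^7 g}{L^\infty(I)})$. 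With only $Y\in W^{6,\infty}$, these remainders are merely $O(h^5)$ and $O(h^6)$ respectively. A jump of $\partial_\rho^6Y$ at $\rho_j$ shows both bounds are sharp. The naive estimate therefore gives $O(h^6)$ per node and only $O(h^5)\norm{\beta_h}{0,h}$ in total.

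The true $O(h^7)$ accuracy holds because these two remainders cancel each other. That cancellation is the relation $T_h[Y_h](\rho_j)\parallel|\vec h_j|\vec\tau_j+|\vec h_{j+1}|\vec\tau_{j+1}$, which is exactly what the factorization encodes. So your claim that six derivatives suffice is not justified for the route you describe. Carried out as written, it would prove the estimate only with a constant involving seven or eight derivatives of $Y$, and only after the lengthy symbolic computation that the proposal leaves undone.
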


\begin{proof}
The identity \eqref{eq:equidistribution} in the proof of Lemma~\ref{lem:force} gives
\begin{equation}\label{eq:factored-tangential-load}
 \begin{aligned}
 &\stiff{Y_h}{Y_h}{\Ih[\beta_hT_h[Y_h]]} =\sum_j\beta_{h,j}(\vec\tau_j-\vec\tau_{j+1}) \cdot T_h[Y_h](\rho_j)\\
 &=-\frac{h^2}{4} \left(h\sum_j\beta_{h, j}\frac{2h}{|\vec h_j+\vec h_{j+1}|} \frac{|\vec h_{j+1}|-|\vec h_j|}{h^2} \frac{|\vec\tau_{j+1}-\vec\tau_j|^2}{h^2}\right).
 \end{aligned}
\end{equation}
At $\rho_j$, expand at $\rho_j \pm \frac{h}{2}$ and use \eqref{eq:Y at mid}  gives
\begin{equation}\label{eq:perturbation of length}
 \begin{aligned}
 \frac{|\vec h_j+\vec h_{j+1}|}{2h} &= |Y_\rho + \frac{h^2}{6}Y_{\rho\rho\rho} + O(h^4)| = |Y_\rho| + \frac{h^2}{6}Y_\rho \cdot Y_{\rho\rho\rho} + O(h^4) \\
 & = |Y_\rho| + \frac{h^2}{6}|Y_\rho|\left((|Y_\rho|)_{\rho\rho} -|Y_\rho|^3 \kappa^2\right)+ O(h^4).
 \end{aligned}
\end{equation}
\begin{equation}\label{eq:perturbation of h}
 \begin{aligned}
 \frac{|\vec h_{j+1}|-|\vec h_j|}{h^2}&=\frac{\left(|Y_\rho| + \frac{h^2}{24}((|Y_\rho|)_{\rho\rho} - |Y_\rho|^3 \kappa^2) + h^4 F(Y)\right)(\rho_j + \frac{1}{2}h) + O(h^6)}{h} \\
 & \qquad  \frac{-\left(|Y_\rho| + \frac{h^2}{24}((|Y_\rho|)_{\rho\rho} - |Y_\rho|^3 \kappa^2) + h^4 F(Y)\right)(\rho_j - \frac{1}{2}h) +  O(h^6) }{h} \\
 & = |Y_\rho|_\rho + \frac{h^2}{24} (|Y_\rho|_{\rho\rho\rho}) + \frac{h^2}{24}\left(((|Y_\rho|)_{\rho\rho} - |Y_\rho|^3 \kappa^2)\right)_\rho + O(h^4) \\
 & = |Y_\rho|_\rho + \frac{h^2}{24} \left(2(|Y_\rho|)_{\rho\rho\rho} -( |Y_\rho|^3\kappa^2)_\rho\right) + O(h^4).
 \end{aligned}
\end{equation}
\begin{equation*}
 \vec\tau_{j+1}=\Bigl(\vec\tau+\frac{h^2}{24}\Bigl(\frac{Y_{\rho\rho\rho}}{|Y_\rho|} -\frac{(Y_{\rho\rho\rho}\cdot Y_\rho)Y_\rho}{|Y_\rho|^3}\Bigr)\Bigr)(\rho_j+h/2)+O(h^4),
\end{equation*}
By \eqref{eq:Y3}, $\frac{Y_{\rho\rho\rho}}{|Y_\rho|}-\frac{(Y_{\rho\rho\rho}\cdot Y_\rho)Y_\rho}{|Y_\rho|^3} =-\bigl(3(|Y_\rho|)_\rho\kappa+|Y_\rho|\kappa_\rho\bigr)\vec n$. Taking the difference quotient at $\rho_j$ gives
\begin{equation*}
 \frac{\vec\tau_{j+1}-\vec\tau_j}{h} =\vec\tau_\rho+\frac{h^2}{24}\Bigl(\vec\tau_{\rho\rho\rho} -\bigl(\bigl(3(|Y_\rho|)_\rho\kappa+|Y_\rho|\kappa_\rho\bigr)\vec n\bigr)_\rho\Bigr)+O(h^4),
\end{equation*}
and therefore
\begin{equation*}
 \frac{|\vec\tau_{j+1}-\vec\tau_j|^2}{h^2} =|\vec\tau_\rho|^2+\frac{h^2}{12}\,\vec\tau_\rho\cdot\Bigl(\vec\tau_{\rho\rho\rho} -\bigl(\bigl(3(|Y_\rho|)_\rho\kappa+|Y_\rho|\kappa_\rho\bigr)\vec n\bigr)_\rho\Bigr)+O(h^4).
\end{equation*}
From $\vec\tau_\rho=-|Y_\rho|\kappa\vec n$ and $\vec n_\rho=|Y_\rho|\kappa\vec\tau$, we obtain $|\vec\tau_\rho|^2=|Y_\rho|^2\kappa^2$ and
\begin{equation*}
 \vec\tau_\rho\cdot\vec\tau_{\rho\rho\rho} =|Y_\rho|\kappa\,(|Y_\rho|\kappa)_{\rho\rho}-|Y_\rho|^4\kappa^4,
\end{equation*}
\begin{equation*}
 -\vec\tau_\rho\cdot\bigl(\bigl(3(|Y_\rho|)_\rho\kappa+|Y_\rho|\kappa_\rho\bigr)\vec n\bigr)_\rho =|Y_\rho|\kappa\,\bigl(3(|Y_\rho|)_\rho\kappa+|Y_\rho|\kappa_\rho\bigr)_\rho.
\end{equation*}
Moreover,
\begin{equation*}
 \begin{aligned}
 (|Y_\rho|\kappa)_{\rho\rho}&=(|Y_\rho|)_{\rho\rho}\kappa+2(|Y_\rho|)_\rho\kappa_\rho+|Y_\rho|\kappa_{\rho\rho},\\
 \bigl(3(|Y_\rho|)_\rho\kappa+|Y_\rho|\kappa_\rho\bigr)_\rho &=3(|Y_\rho|)_{\rho\rho}\kappa+4(|Y_\rho|)_\rho\kappa_\rho+|Y_\rho|\kappa_{\rho\rho}.
 \end{aligned}
\end{equation*}
Adding these two expressions, multiplying by $|Y_\rho|\kappa$ and subtracting $|Y_\rho|^4\kappa^4$ gives
\begin{equation}\label{eq:perturbation of tangent}
 \begin{aligned}
 &\frac{|\vec\tau_{j+1}-\vec\tau_j|^2}{h^2} - |Y_\rho|^2\kappa^2 \\
 &=\frac{h^2}{12} \left(4|Y_\rho|(|Y_\rho|)_{\rho\rho}\kappa^2+6|Y_\rho|(|Y_\rho|)_\rho\kappa\kappa_\rho +2|Y_\rho|^2\kappa\kappa_{\rho\rho}-|Y_\rho|^4\kappa^4\right)+O(h^4).
 \end{aligned}
\end{equation}

Substituting \eqref{eq:perturbation of length}-\eqref{eq:perturbation of tangent} in \eqref{eq:factored-tangential-load} proves \eqref{eq:tangential-geometric-expansion}.
\end{proof}

\begin{lemma}[Tangential expansion of the stiffness, II]\label{lem:tangentflux}
For $u\in W^{6, \infty}(I), \beta_h\in S_h$, we have
\begin{equation}\label{eq:tangential-stiffness-expansion}
 \begin{aligned}
 &\biggl|\stiff{Y_h}{u_h}{\Ih[\beta_hT_h[Y_h]]} +\int_I\left(\frac{u_\rho}{|Y_\rho|}\right)_\rho\cdot\vec\tau\,\beta_h\dd\rho\\
 &\qquad - h^2 \int_I S_2[u, Y]\beta_h\dd\rho \biggr| \leq h^4C\!\left(c^{-1},\norm Y{W^{6, \infty}(I)}\right) \norm u{W^{6, \infty}(I)}\norm{\beta_h}{0,h}.
 \end{aligned}
\end{equation}
where $S_2[u, Y]$ is the following second order tangential stiffness coefficient:
\begin{equation}\label{eq:tangential-stiffness-coefficient}
 \begin{aligned}
 S_2[u,Y]={}&- \left(\frac{(|Y_\rho|)_\rho}{12}\partial_{ss}u +\frac{|Y_\rho|^2\kappa^2}{24}\partial_su\right)_\rho \cdot\vec\tau\\
 &+\left(\frac12(|Y_\rho|)_\rho\kappa +\frac16|Y_\rho|\kappa_\rho\right) (\partial_su)_\rho\cdot\vec n\\
 &+\frac16(\partial_su)_{\rho\rho}\cdot\vec\tau_\rho +\frac1{12}(\partial_su)_\rho\cdot\vec\tau_{\rho\rho}.
 \end{aligned}
\end{equation}
\end{lemma}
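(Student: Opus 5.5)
We propose to reduce the tangential stiffness term to a nodal sum and expand each nodal quantity in $h$, as in the proof of Lemma~\ref{lem:tangentexpansion}. Writing $\psi_j$ for the nodal hat functions, the test function $\Ih[\beta_hT_h[Y_h]]$ has nodal values $\beta_{h,j}T_h[Y_h](\rho_j)$. Summation by parts on the stiffness form therefore gives
\begin{equation*}
 \stiff{Y_h}{u_h}{\Ih[\beta_hT_h[Y_h]]} =\sum_j\beta_{h,j}\Bigl(\frac{u(\rho_j)-u(\rho_{j+1})}{|\vec h_{j+1}|}+\frac{u(\rho_j)-u(\rho_{j-1})}{|\vec h_{j}|}\Bigr)\cdot T_h[Y_h](\rho_j).
\end{equation*}
Thus the whole statement reduces to a pointwise expansion, at each node, of
\begin{equation*}
 Q_j:=-\frac1h\Bigl(\frac{u(\rho_{j+1})-u(\rho_j)}{|\vec h_{j+1}|}-\frac{u(\rho_j)-u(\rho_{j-1})}{|\vec h_{j}|}\Bigr)\cdot T_h[Y_h](\rho_j)
\end{equation*}
up to $O(h^4)$. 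After that we write $h\sum_j\beta_{h,j}g(\rho_j)$ as $\int_I\Ih[g\beta_h]$ and apply Lemma~\ref{lem:weakquadrature}, whose $-\frac{h^2}{12}\int g_{\rho\rho}\beta_h$ correction contributes to $S_2$.

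\textbf{Expanding the edge difference quotients.} For the bracket we use the expansion from the proof of Lemma~\ref{lem:weakflux}: at $\rho_j\pm h/2$,
\begin{equation*}
 \frac{u(\rho_{j+1})-u(\rho_{j})}{|Y(\rho_{j+1})-Y(\rho_{j})|}=\Bigl(\partial_su+\frac{h^2}{24}G\Bigr)(\rho_j+h/2)+O(h^4),\qquad G=\frac{u_{\rho\rho\rho}}{|Y_\rho|}-\frac{(|Y_\rho|)_{\rho\rho}}{|Y_\rho|^2}u_\rho+|Y_\rho|\kappa^2u_\rho .
\end{equation*}
A centered difference of a smooth function $F$ satisfies
\begin{equation*}
 \frac{F(\rho_j+h/2)-F(\rho_j-h/2)}{h}=F_\rho+\frac{h^2}{24}F_{\rho\rho\rho}+O(h^4).
\end{equation*}
Hence
\begin{equation*}
 \frac1h(\cdots)=(\partial_su)_\rho+\frac{h^2}{24}\bigl((\partial_su)_{\rho\rho\rho}+G_\rho\bigr)+O(h^4).
\end{equation*}
We dot this with $T_h[Y_h]$, expanded by \eqref{eq:nodal-tangent-expansion} as $\vec\tau-h^2(\frac12(|Y_\rho|)_\rho\kappa+\frac16|Y_\rho|\kappa_\rho)\vec n+O(h^4)$. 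The leading term is $-(\partial_su)_\rho\cdot\vec\tau$, which matches the integral in \eqref{eq:tangential-stiffness-expansion}. The cross term produces the $\vec n$ line of \eqref{eq:tangential-stiffness-coefficient} with the correct sign.

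\textbf{Matching $S_2$.} The remaining $h^2$ terms are $-\frac1{24}((\partial_su)_{\rho\rho\rho}+G_\rho)\cdot\vec\tau$ together with the quadrature correction $+\frac1{12}((\partial_su)_\rho\cdot\vec\tau)_{\rho\rho}$. We expand the latter by the Leibniz rule into
\begin{equation*}
 (\partial_su)_{\rho\rho\rho}\cdot\vec\tau+2(\partial_su)_{\rho\rho}\cdot\vec\tau_\rho+(\partial_su)_\rho\cdot\vec\tau_{\rho\rho}.
\end{equation*}
Then we rewrite $u_{\rho\rho\rho}/|Y_\rho|$ inside $G$ in terms of $\partial_su$ and $\partial_{ss}u$ using $u_\rho=|Y_\rho|\partial_su$. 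One checks that the combination collapses to the stated $S_2$: the coefficients $\frac16$ and $\frac1{12}$ on the $\vec\tau_\rho,\vec\tau_{\rho\rho}$ terms, and the divergence-type term $-\bigl(\frac{(|Y_\rho|)_\rho}{12}\partial_{ss}u+\frac{|Y_\rho|^2\kappa^2}{24}\partial_su\bigr)_\rho\cdot\vec\tau$.

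\textbf{Remainders.} The $O(h^4)$ terms are bounded pointwise by $C(c^{-1},\norm Y{W^{6,\infty}})\norm u{W^{6,\infty}}$, since the expansions use up to six derivatives of $u$ and $Y$. Summing against $\beta_{h,j}$ and using $h\sum_j|\beta_{h,j}|\le\norm{\beta_h}{0,h}$ gives the $h^4\norm{\beta_h}{0,h}$ bound.

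The main obstacle is the algebraic bookkeeping in the matching step, where several third-derivative terms must cancel exactly. We would organize it by expressing everything through $w:=\partial_su$ and the identity $u_{\rho\rho\rho}=(|Y_\rho|w)_{\rho\rho}$. This should make the cancellations of the $(|Y_\rho|)_{\rho\rho}$ and $(|Y_\rho|)_{\rho\rho\rho}$ terms transparent.
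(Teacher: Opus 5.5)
Your proposal is correct, but it is organized differently from the paper.

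\textbf{The paper's route.} The paper never returns to nodal sums. It inserts $\phi_h=\Ih[\beta_hT_h[Y_h]]$ into the weak expansion of Lemma~\ref{lem:weakflux}, integrates by parts in the continuous integral, and uses \eqref{eq:nodal-tangent-expansion}. It then strips the interpolant from $\Ih[\beta_h\vec\tau]$ by applying Lemma~\ref{lem:weakquadrature} twice, first vector-valued and then scalar; this produces the $\vec\tau_\rho,\vec\tau_{\rho\rho}$ terms. The divergence term of $S_2$ is then just the derivative of the $h^2$ flux coefficient of Lemma~\ref{lem:weakflux}. Via $|Y_{\rho\rho}|^2=((|Y_\rho|)_\rho)^2+|Y_\rho|^4\kappa^2$, that coefficient simplifies to $\frac{(|Y_\rho|)_\rho}{12}\partial_{ss}u+\frac{|Y_\rho|^2\kappa^2}{24}\partial_su$.

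\textbf{Your route.} Your discrete summation by parts takes the dot product with $T_h[Y_h]$ exactly at the nodes, so a single scalar quadrature correction suffices. Your matching step does close. With $w=\partial_su$ and $u_\rho=|Y_\rho|w$, the $(|Y_\rho|)_{\rho\rho}$ terms cancel inside $G$, giving $G=w_{\rho\rho}+\frac{2(|Y_\rho|)_\rho}{|Y_\rho|}w_\rho+|Y_\rho|^2\kappa^2w$. Hence $\frac1{24}(w_{\rho\rho\rho}-G_\rho)=-\bigl(\frac{(|Y_\rho|)_\rho}{12}\partial_{ss}u+\frac{|Y_\rho|^2\kappa^2}{24}\partial_su\bigr)_\rho$, which is exactly the first line of $S_2$.

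\textbf{The step to justify.} The price of your route is the ``Hence'' step. Dividing the difference of two edge expansions by $h$ turns a merely pointwise $O(h^4)$ remainder into $O(h^3)$, so you should say why it stays $O(h^4)$. Let $E(\mu,h)=\frac{u(\mu+h/2)-u(\mu-h/2)}{|Y(\mu+h/2)-Y(\mu-h/2)|}$ be the edge quotient as a function of the midpoint $\mu$. Write the nodal difference as $\frac1h\int_{\rho_j-h/2}^{\rho_j+h/2}\partial_\mu E(\mu,h)\dd\mu$ and expand $\partial_\mu E$ to $O(h^4)$. This needs no more than six derivatives of $u$ and $Y$. The $\norm{\phi_h}{0,h}$ form of the remainder in Lemma~\ref{lem:weakflux}, on which the paper's route relies, rests on the same fact.

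\textbf{Trade-off.} Your argument is self-contained and elementary. The paper's is shorter given Lemma~\ref{lem:weakflux}, and it makes the divergence structure of $S_2$ visible at once.
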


\begin{proof}
Apply \eqref{eq:interval-mass-expansion} first to $f = \left(\frac{u_\rho}{|Y_\rho|}\right)_\rho$ with test $\Ih[\beta_h\vec\tau]$, and then to $f = \left(\frac{u_\rho}{|Y_\rho|}\right)_\rho \cdot \vec\tau$ with test $\beta_h$. Use \eqref{eq:interval-mass-expansion} in the terms multiplied by $h^2$. This gives
\begin{equation}\label{eq:tangential-product-quadrature}
 \begin{aligned}
 &\bigg|\int_I \left(\frac{u_\rho}{|Y_\rho|}\right)_\rho\cdot\Ih[\beta_h\vec\tau]\dd\rho -\int_I\left(\frac{u_\rho}{|Y_\rho|}\right)_\rho\cdot\vec\tau\beta_h\dd\rho\\
 &\hspace{14mm}+\frac{h^2}{12}\int_I \left(2\left(\frac{u_\rho}{|Y_\rho|}\right)_{\rho\rho}\cdot\vec\tau_\rho +\left(\frac{u_\rho}{|Y_\rho|}\right)_\rho\cdot\vec\tau_{\rho\rho}\right) \beta_h\dd\rho\bigg|\\
 &\qquad\leq h^4C\!\left(c^{-1},\norm Y{W^{6, \infty}(I)}\right) \norm u{W^{6, \infty}(I)}\norm{\beta_h}{0,h}.
 \end{aligned}
\end{equation}
Now take $\phi_h=\Ih[\beta_hT_h[Y_h]]$ in \eqref{eq:stiffness-expansion} and integrate by parts. Use \eqref{eq:nodal-tangent-expansion} from Lemma~\ref{lem:frameexpansion}, and $\partial_s u = \frac{u_\rho}{|Y_\rho|}, \partial_{ss} u = \frac{1}{|Y_\rho|} \left(\frac{u_\rho}{|Y_\rho|}\right)_\rho$, we obtain the quadratic coefficient $S_2[u, Y]$.
\end{proof}

\subsection{Defect and approximation flows}\label{subsec:weakassembly}

We consider a single, possibly $h$-dependent curve $Y$ determined by the following velocity field
\begin{equation}\label{eq:velocity-form}
 \begin{gathered}
 Y_t=V_0[Y]+h^2V_2[Y],\\
 V_0[Y]=\alpha_0[Y]\vec n+\beta_0[Y]\vec\tau, \qquad V_2[Y]=\alpha_2[Y]\vec n+\beta_2[Y]\vec\tau.
 \end{gathered}
\end{equation}
The four coefficients are determined by $Y$ and their regularity assumptions are given below.

Let $v(t)=\frac{Y(t+h^2)-Y(t)}{h^2}$ and $v_h=\Ih v(t)$, the defect is defined as
\begin{equation}\label{eq:assembled-defect-definition}
 \begin{aligned}
 d_h(\phi_h)={}&\mass{Y_h}{v_h}{\phi_h} + \stiff{Y_h}{Y_h}{\phi_h}\\
 &+ h^2\stiff{Y_h}{v_h}{\phi_h}.
 \end{aligned}
\end{equation}
\begin{lemma}\label{lem:general-defect-expansion}
Assume that $Y,Y_t,Y_{tt},Y_{ttt}$ are continuous in time with values in $[W^{6,\infty}(I)]^2$, $[W^{6,\infty}(I)]^2$, $[W^{3, \infty}(I)]^2$, and $[W^{2, \infty}(I)]^2$, respectively, and that $\alpha_0[Y], \beta_0[Y],\alpha_2[Y],\beta_2[Y] \in C([0,T];H^3(I))\cap C^1([0,T];H^2(I))$. Suppose that
\begin{equation}\label{eq:general-defect-regularity}
 \begin{aligned}
 &\inf_{(\rho,t)\in I\times[0,T]}|Y_\rho(\rho,t)|\geq c>0,\\
 &\sup_{0\leq t\leq T}\Bigl( \norm{Y(t)}{W^{6,\infty}(I)} +\norm{Y_t(t)}{W^{6,\infty}(I)} +\norm{Y_{tt}(t)}{W^{3,\infty}(I)}+\norm{Y_{ttt}(t)}{W^{2,\infty}(I)}\\
 &\qquad \qquad +\norm{V_2[Y(t)]}{W^{3,\infty}(I)}\Bigr)\leq M,
 \end{aligned}
\end{equation}
There exist $h_0=h_0(c^{-1},M)>0$ and $C=C(c^{-1},M)$ such that, for $0<h\leq h_0$ and $0\leq t\leq T-h^2$, the following estimates hold.

For every $\phi_h\in[S_h]^2$,
\begin{equation}\label{eq:defect-expansion}
 \begin{aligned}
 &\Bigg|d_h(\phi_h) -\int_I|Y_\rho|(\alpha_0+\kappa)\vec n\cdot\phi_h\,\dd\rho\\
 &\quad-h^2\int_I\Bigg[ |Y_\rho|\left(\alpha_2+\frac12(V_0)_t\cdot\vec n\right)\vec n\\
 &\qquad\quad -\frac1{12}(|Y_\rho|\alpha_0\vec n)_{\rho\rho} +\frac16(|Y_\rho|\vec n\otimes\vec n)_{\rho\rho}V_0 +\frac{|Y_\rho|^3\kappa^2}{24} (9\alpha_0\vec n-8V_0)\\
 &\qquad\quad -\left(\partial_sV_0 +\frac{|Y_\rho|^2\kappa^2}{24}\vec\tau -\frac{(|Y_\rho|)_\rho\kappa}{12}\vec n\right)_\rho \Bigg]\cdot\phi_h\,\dd\rho\Bigg| \leq Ch^4\norm{\phi_h}{0,h}.
 \end{aligned}
\end{equation}
For every $\beta_h\in S_h$,
\begin{equation}\label{eq:general-tangential-defect-expansion}
 \begin{aligned}
 &\Bigg|d_h\bigl(\Ih[\beta_hT_h[Y_h]]\bigr)-h^2\int_I|Y_\rho|\Bigl[ (-\partial_{ss}+\kappa^2)\beta_0 -2\kappa\partial_s\alpha_0\\
 &\qquad -\alpha_0\partial_s\kappa -\frac14(|Y_\rho|)_\rho\kappa^2 \Bigr]\beta_h\,\dd\rho\\
 &\quad-h^4\int_I\Bigg[ S_4[Y]+S_2[V_0,Y]+|Y_\rho|\Bigl[ (-\partial_{ss}+\kappa^2)\beta_2 -2\kappa\partial_s\alpha_2-\alpha_2\partial_s\kappa \Bigr]\\
 &\qquad -\frac12\left(\frac{(V_0)_{t\rho}}{|Y_\rho|}\right)_\rho \cdot\vec\tau \Bigg]\beta_h\,\dd\rho\Bigg| \leq Ch^6\norm{\beta_h}{0,h}.
 \end{aligned}
\end{equation}
\end{lemma}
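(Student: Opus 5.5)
The plan is to expand the three terms of $d_h(\phi_h)$ separately using the lemmas of Subsection~\ref{subsec:weakexpansions}, after first expanding the discrete velocity $v$ in time. By Taylor's formula in $t$ with $\tau=h^2$,
\begin{equation*}
 v(t)=Y_t+\frac{h^2}2Y_{tt}+O(h^4)=V_0+h^2\Bigl(V_2+\frac12(V_0)_t\Bigr)+O(h^4),
\end{equation*}
where the remainder involves $Y_{ttt}$ in $W^{2,\infty}$ and $(V_2)_t$. For the mass term the remainder is only needed in $W^{0,\infty}$; for the $h^2$-weighted stiffness term, $v$ is needed to second order in $W^{3,\infty}$-type norms. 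In the mass-lumping term I apply Lemma~\ref{lem:weakquadrature, mass-lumping} with $u=v$. The leading term $\int_I|Y_\rho|(v\cdot\vec n)\vec n\cdot\phi_h$ gives $|Y_\rho|\alpha_0\vec n$ at order one and $|Y_\rho|(\alpha_2+\frac12(V_0)_t\cdot\vec n)\vec n$ at order $h^2$. In the $h^2$ correction only $u=V_0$ is needed, which yields the terms $-\frac1{12}(|Y_\rho|\alpha_0\vec n)_{\rho\rho}+\frac16(|Y_\rho|\vec n\otimes\vec n)_{\rho\rho}V_0+\frac{|Y_\rho|^3\kappa^2}{24}(9\alpha_0\vec n-8V_0)$.

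For the stiffness term $\stiff{Y_h}{Y_h}{\phi_h}$ I use Lemma~\ref{lem:weakflux} with $u=Y$ and integrate by parts. The leading part $\int\vec\tau\cdot\phi_{h,\rho}=-\int\vec\tau_\rho\cdot\phi_h=\int|Y_\rho|\kappa\vec n\cdot\phi_h$ produces the $+\kappa$ term. The $h^2$ coefficient, using \eqref{eq:Y2} and $|Y_{\rho\rho}|^2=((|Y_\rho|)_\rho)^2+|Y_\rho|^4\kappa^2$, simplifies to $\frac{(|Y_\rho|)_\rho}{12|Y_\rho|^2}Y_{\rho\rho}+\dots=\frac{|Y_\rho|^2\kappa^2}{24}\vec\tau-\frac{(|Y_\rho|)_\rho\kappa}{12}\vec n$ after the $((|Y_\rho|)_\rho)^2$ terms cancel. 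The third term $h^2\stiff{Y_h}{v_h}{\phi_h}$ needs only its leading part $h^2\int\partial_sV_0\cdot\phi_{h,\rho}$, since the rest is $O(h^4)$ by Lemma~\ref{lem:weakflux} and the bound on $v$. A subtle point is that Lemma~\ref{lem:weakflux} bounds the remainder by $\norm{\phi_h}{0,h}$ only after integrating by parts. I will do the integration by parts exactly at the continuous level and note that the $\phi_{h,\rho}$ pairings with smooth functions convert to $L^2$ pairings.

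For the tangential estimate \eqref{eq:general-tangential-defect-expansion} I test with $\phi_h=\Ih[\beta_hT_h[Y_h]]$ and now need one more order. For the mass term I use Lemma~\ref{lem:frameexpansion}. Since $T_h[Y_h]\cdot\widetilde N_h[Y_h]=0$ exactly at nodes, the lumped mass term vanishes identically. For the geometric stiffness I use Lemma~\ref{lem:tangentexpansion}, which supplies $-\frac14|Y_\rho|(|Y_\rho|)_\rho\kappa^2$ at order $h^2$ and $S_4[Y]$ at order $h^4$. For $h^2\stiff{Y_h}{v_h}{\cdot}$ I use Lemma~\ref{lem:tangentflux} with $u=v$. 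This gives $-h^2\int(\partial_s v)_\rho\cdot\vec\tau\,\beta_h+h^4\int S_2[V_0,Y]\beta_h$. Inserting the time expansion of $v$ produces the $V_2$ contribution and $-\frac12((V_0)_{t\rho}/|Y_\rho|)_\rho\cdot\vec\tau$ at order $h^4$.

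The remaining identity is the Frenet computation
\begin{equation*}
 -(\partial_sV)_\rho\cdot\vec\tau=|Y_\rho|\bigl[(-\partial_{ss}+\kappa^2)\beta-2\kappa\partial_s\alpha-\alpha\partial_s\kappa\bigr]
\end{equation*}
for $V=\alpha\vec n+\beta\vec\tau$. This follows from $\partial_sV=(\partial_s\alpha+\kappa\beta)\vec n+(\partial_s\beta-\kappa\alpha)\vec\tau$ and differentiating once more. I expect the main obstacle to be bookkeeping of the remainders rather than algebra. The tangential case needs $O(h^6)$, so $v$ must be expanded to $h^4$ inside the $h^2$-weighted stiffness. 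That requires controlling $Y_{ttt}$ and $(V_2)_t$ in norms compatible with Lemma~\ref{lem:tangentflux}, and checking that the stated regularity (e.g.\ $Y_{ttt}\in W^{2,\infty}$, $\alpha_i,\beta_i\in C^1H^2$) suffices, possibly via the $\phi_{h,\rho}$ form of the remainder plus an inverse-free integration by parts.
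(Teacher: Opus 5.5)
Your proposal follows the paper's proof essentially step for step. The shared steps are: the time-Taylor expansion of $v$ with $W^{3,\infty}$ and $W^{2,\infty}$ remainder bounds; Lemma~\ref{lem:weakquadrature, mass-lumping} and Lemma~\ref{lem:weakflux} with $u=v$ and $u=Y$ for the full defect; and, for the tangential defect, the exact vanishing of the lumped mass term, Lemmas~\ref{lem:tangentexpansion} and~\ref{lem:tangentflux}, linearity of $(\partial_sv)_\rho$ and $S_2[v,Y]$ in $v$, and the Frenet identity.

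One sign slip needs fixing. With the paper's convention $\partial_s\vec\tau=-\kappa\vec n$, $\partial_s\vec n=\kappa\vec\tau$, one has $\partial_sV=(\partial_s\alpha-\kappa\beta)\vec n+(\partial_s\beta+\kappa\alpha)\vec\tau$; your signs on the $\kappa$-terms are reversed, as the paper's formula for $\partial_sV_0$ confirms. It is this corrected expression whose tangential derivative component $\partial_s(\partial_s\beta+\kappa\alpha)+\kappa(\partial_s\alpha-\kappa\beta)$ gives the stated identity.
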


\begin{proof}
Taylor expansion gives
\begin{equation}\label{eq:time-increment-estimates}
 \begin{aligned}
 &\norm{v}{W^{6,\infty}(I)} \leq \sup_{0\leq t\leq T}\norm{Y_t}{W^{6,\infty}(I)},\\
 &\norm{v-V_0}{W^{3,\infty}(I)} \leq h^2\sup_{0\leq t\leq T}\norm{V_2[Y]}{W^{3,\infty}(I)} +\frac{h^2}{2} \sup_{0\leq t\leq T}\norm{Y_{tt}}{W^{3,\infty}(I)},\\
 &\left\|v-V_0-h^2\left(V_2+\frac12Y_{tt}\right)\right\|_{W^{2,\infty}(I)} \leq\frac{h^4}{6} \sup_{0\leq t\leq T}\norm{Y_{ttt}}{W^{2,\infty}(I)}.
 \end{aligned}
\end{equation}

For the defect \eqref{eq:defect-expansion}, apply Lemma~\ref{lem:weakquadrature, mass-lumping} to $u=v$ for the mass-lumping part gives
\begin{equation}\label{eq:general-mass-expansion}
 \begin{aligned}
 &\Bigg|\mass{Y_h}{v_h}{\phi_h} -\int_I |Y_\rho|\alpha_0\vec n\cdot\phi_h\,\dd\rho\\
 &\quad-h^2\int_I |Y_\rho|\left(\alpha_2 + \frac{1}{2}(V_0)_t\cdot\vec n\right)\vec n\cdot\phi_h\,\dd\rho\\
 &\quad-\frac{h^2}{12}\int_I\Bigg[-\bigl(|Y_\rho|\alpha_0\vec n\bigr)_{\rho\rho} + 2\bigl(|Y_\rho|\vec n\otimes\vec n\bigr)_{\rho\rho}V_0\\
 &\qquad\qquad +\frac{|Y_\rho|^3\kappa^2}{2} \bigl(9\alpha_0\vec n-8V_0\bigr) \Bigg]\cdot\phi_h\,\dd\rho\Bigg|\\
 &\leq C h^4 \sup_{0\leq t\leq T}\left(\norm{Y_{ttt}}{L^{2}(I)} + \left\|V_2[Y]\right\|_{H^2(I)} + \left\|Y_{tt}\right\|_{H^2(I)}\right)\norm{\phi_h}{0,h}\\
 &\quad + \Bigg|\mass{Y_h}{v_h}{\phi_h} -\int_I |Y_\rho|(v\cdot\vec n)\vec n\cdot\phi_h\,\dd\rho\\
 &\quad-\frac{h^2}{12}\int_I\Bigg[-\bigl(|Y_\rho|(v\cdot\vec n)\vec n\bigr)_{\rho\rho} + 2\bigl(|Y_\rho|\vec n\otimes\vec n\bigr)_{\rho\rho}v\\
 &\qquad\qquad +\frac{|Y_\rho|^3\kappa^2}{2} \bigl(9(v\cdot\vec n)\vec n-8v\bigr) \Bigg]\cdot\phi_h\,\dd\rho\Bigg|\\
 &\leq C h^4 \norm{\phi_h}{0,h}.
 \end{aligned}
\end{equation}

For the stiffness terms, use Lemma~\ref{lem:weakflux} to $u=v$, $u=Y$ respectively gives
\begin{equation}\label{eq:general-combined-stiffness}
 \begin{aligned}
 &\Bigg| \stiff{Y_h}{Y_h}{\phi_h} +h^2\stiff{Y_h}{v_h}{\phi_h} -\int_I|Y_\rho|\kappa\vec n\cdot\phi_h\,\dd\rho\\
 &\quad+h^2\int_I\left( \partial_sV_0 +\frac{|Y_\rho|^2\kappa^2}{24}\vec\tau -\frac{(|Y_\rho|)_\rho\kappa}{12}\vec n \right)_\rho\cdot\phi_h\,\dd\rho\Bigg| \leq Ch^4\norm{\phi_h}{0,h}.
 \end{aligned}
\end{equation}
Combining \eqref{eq:general-mass-expansion} and \eqref{eq:general-combined-stiffness} proves \eqref{eq:defect-expansion}.

For the tangential defect, nodal orthogonality gives exactly
\begin{equation}\label{eq:general-tangential-mass-zero}
 \mass{Y_h}{v_h}{\Ih[\beta_hT_h[Y_h]]}=0.
\end{equation}
Lemmas~\ref{lem:tangentexpansion} and \ref{lem:tangentflux} therefore give
\begin{equation}\label{eq:assembled-tangential-expansion}
 \begin{aligned}
 &\Bigg|d_h\bigl(\Ih[\beta_hT_h[Y_h]]\bigr)-h^2\int_I\left[ -\frac14|Y_\rho|(|Y_\rho|)_\rho\kappa^2 -(\partial_sv)_\rho\cdot\vec\tau \right]\beta_h\,\dd\rho\\
 &\quad-h^4\int_I\bigl(S_4[Y]+S_2[v,Y]\bigr)\beta_h\,\dd\rho \Bigg| \leq Ch^6\norm{\beta_h}{0,h}.
 \end{aligned}
\end{equation}
Since $(\partial_sv)_\rho$ and $S_2[v, Y]$ are linear in $v$, we have
\begin{equation}\label{eq:general-tangential-time-errors}
 \begin{aligned}
 &\left\| (\partial_sv)_\rho-(\partial_sV_0)_\rho -h^2\left(\partial_s\left(V_2+\frac12Y_{tt}\right)\right)_\rho \right\|_{L^\infty(I)}\leq Ch^4,\\
 &\norm{S_2[v,Y]-S_2[V_0,Y]}{L^\infty(I)} \leq C\norm{v-V_0}{W^{3,\infty}(I)}\leq Ch^2.
 \end{aligned}
\end{equation}
For arbitrary  $\alpha,\beta\in H^2(I)$, the Frenet identities yield
\begin{equation}\label{eq:general-tangential-leading-identity}
 -\bigl(\partial_s(\alpha\vec n+\beta\vec\tau)\bigr)_\rho \cdot\vec\tau =|Y_\rho|\bigl[ (-\partial_{ss}+\kappa^2)\beta -2\kappa\partial_s\alpha-\alpha\partial_s\kappa\bigr].
\end{equation}
Finally, substitute \eqref{eq:general-tangential-time-errors} into \eqref{eq:assembled-tangential-expansion}, and use \eqref{eq:general-tangential-leading-identity} with $(\alpha,\beta)=(\alpha_0,\beta_0)$ and $(\alpha_2,\beta_2)$. This proves \eqref{eq:general-tangential-defect-expansion}.
\end{proof}

With the defect expansion, we can prove the comparison flows and improved defects.

\begin{theorem}[Approximation flows and improved defects]\label{thm:comparison-flow-defects}
For a regular curve $Y$, set $\alpha_0[Y]=-\kappa$ and let $\beta_0[Y]$ be the  solution of
\begin{equation}\label{eq:comparison-beta0}
 (-\partial_{ss}+\kappa^2)\beta_0 =\frac14(|Y_\rho|)_\rho\kappa^2 -3\kappa\partial_s\kappa.
\end{equation}
Set $V_0[Y] = \alpha_0[Y]\vec n + \beta_0[Y]\vec\tau$, define
\begin{equation}\label{eq:comparison-alpha2}
 \begin{aligned}
 \alpha_2[Y]={}& -\frac12\partial_{ss}\kappa+\frac32\kappa^3 -2\kappa\partial_s\beta_0+\frac12\kappa\beta_0^2 -\frac1{12}\kappa_{\rho\rho} -\frac{(|Y_\rho|)_\rho}{4|Y_\rho|}\kappa_\rho -\frac14|Y_\rho|^2\kappa^3\\
 &-\left(\frac12(|Y_\rho|)_\rho\kappa +\frac16|Y_\rho|\kappa_\rho\right)\beta_0,
 \end{aligned}
\end{equation}
and let $\beta_2[Y]$ be the solution of
\begin{equation}\label{eq:comparison-beta2}
 \begin{aligned}
 (-\partial_{ss}+\kappa^2)\beta_2 ={}&2\kappa\partial_s\alpha_2+\alpha_2\partial_s\kappa -\frac{S_4[Y]+S_2[V_0,Y]}{|Y_\rho|}\\
 &-\frac{\kappa^2}{8|Y_\rho|^3} \left[|Y_\rho|^4 (\partial_s\beta_0-\kappa^2)\right]_\rho\\
 &-\frac{(|Y_\rho|)_\rho\kappa}{4} \left(\partial_{ss}\kappa+\kappa^3 +\beta_0\partial_s\kappa\right) -\frac14\partial_s \left[(\partial_s\kappa+\kappa\beta_0)^2\right].
 \end{aligned}
\end{equation}
All quantities in \eqref{eq:comparison-beta0}--\eqref{eq:comparison-beta2} are evaluated on the same curve $Y$.

Let $Y^{[0]}$ and $Y^{[2]}$ be the solution of
\begin{equation}\label{eq:comparison-flows}
 \begin{aligned}
 Y_t^{[0]}={}& -\kappa[Y^{[0]}]\vec n[Y^{[0]}] +\beta_0[Y^{[0]}]\vec\tau[Y^{[0]}],\\
 Y_t^{[2]}={}& \bigl(-\kappa[Y^{[2]}]+h^2\alpha_2[Y^{[2]}]\bigr) \vec n[Y^{[2]}] +\bigl(\beta_0[Y^{[2]}]+h^2\beta_2[Y^{[2]}]\bigr) \vec\tau[Y^{[2]}],
 \end{aligned}
\end{equation}
with a prescribed initial parametrization $Y_0$. For $r\in\{0,2\}$, set $Y_h^{[r]}=\Ih Y^{[r]}$ and define $d_h^{[r]}$ by \eqref{eq:assembled-defect-definition} with $Y=Y^{[r]}$.

Assume that both solutions satisfy the regularity assumptions of Lemma~\ref{lem:general-defect-expansion} for $t\in [0, T]$. Then there exist $h_0>0$ and $C>0$ such that, for $0<h\leq h_0$, $t\in [0, T-h^2]$, $\phi_h\in[S_h]^2$, $\beta_h\in S_h$, the following estimates hold.
\begin{equation}\label{eq:zeroth-comparison-defects}
 \begin{aligned}
 |d_h^{[0]}(\phi_h)|\leq Ch^2\norm{\phi_h}{0,h},\qquad \left|d_h^{[0]}\bigl( \Ih[\beta_hT_h[Y_h^{[0]}]]\bigr)\right| \leq Ch^4\norm{\beta_h}{0,h},
 \end{aligned}
\end{equation}
and
\begin{equation}\label{eq:second-comparison-defects}
 \begin{aligned}
 |d_h^{[2]}(\phi_h)| \leq Ch^4\norm{\phi_h}{0,h}, \qquad \left|d_h^{[2]}\bigl( \Ih[\beta_hT_h[Y_h^{[2]}]]\bigr)\right| \leq Ch^6\norm{\beta_h}{0,h}.
 \end{aligned}
\end{equation}
\end{theorem}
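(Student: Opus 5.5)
The plan is to apply Lemma~\ref{lem:general-defect-expansion} to $Y=Y^{[0]}$ and to $Y=Y^{[2]}$, and then show that the choices \eqref{eq:comparison-beta0}--\eqref{eq:comparison-beta2} make the leading coefficients in \eqref{eq:defect-expansion} and \eqref{eq:general-tangential-defect-expansion} vanish. Both flows have the form \eqref{eq:velocity-form}. For $Y^{[0]}$ we take $\alpha_2=\beta_2=0$, and for $Y^{[2]}$ we take $\alpha_2,\beta_2$ as defined. By hypothesis the regularity assumptions of the lemma hold, so its two estimates are available uniformly for $t\in[0,T-h^2]$. Since $-\partial_{ss}+\kappa^2$ is coercive on a closed curve (because $\kappa\not\equiv0$), $\beta_0$ and $\beta_2$ are well defined. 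All remaining work is algebraic identification of the $h^0$, $h^2$ and $h^4$ coefficients.

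\emph{Step 1 (order zero).} With $\alpha_0=-\kappa$, the $O(1)$ term $\int_I|Y_\rho|(\alpha_0+\kappa)\vec n\cdot\phi_h$ in \eqref{eq:defect-expansion} vanishes. The $h^2$ bracket is bounded by $C(M)$ in $L^\infty$, so Cauchy--Schwarz gives $|d_h^{[0]}(\phi_h)|\le Ch^2\norm{\phi_h}{0,h}$. For the tangential part, we insert $\alpha_0=-\kappa$ into the $h^2$ coefficient of \eqref{eq:general-tangential-defect-expansion}. Since $-2\kappa\partial_s\alpha_0-\alpha_0\partial_s\kappa=3\kappa\partial_s\kappa$, the coefficient becomes $|Y_\rho|[(-\partial_{ss}+\kappa^2)\beta_0+3\kappa\partial_s\kappa-\frac14(|Y_\rho|)_\rho\kappa^2]$. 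This is zero by \eqref{eq:comparison-beta0}, and we are left with $Ch^4\norm{\beta_h}{0,h}$. The same cancellation holds for $Y^{[2]}$, because $\beta_0$ is defined through the same equation on the curve $Y^{[2]}$.

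\emph{Step 2 (order two, normal part).} For $Y^{[2]}$, the $O(1)$ term vanishes again, and we must show that the $h^2$ bracket in \eqref{eq:defect-expansion} is zero as a vector field, not merely its normal component. First we compute $(V_0)_t\cdot\vec n$ along the flow. To leading order $Y_t=V_0$, and the $h^2$ corrections contribute only to the remainder. Using the evolution of $\vec n$ and $\vec\tau$ under a velocity $\alpha\vec n+\beta\vec\tau$, for example $\vec n_t=-(\partial_s\alpha+\kappa\beta)\vec\tau$ up to sign conventions, together with the Frenet relations, we expand all terms in the frame $\{\vec n,\vec\tau\}$. This covers $(|Y_\rho|\alpha_0\vec n)_{\rho\rho}$, $(|Y_\rho|\vec n\otimes\vec n)_{\rho\rho}V_0$, the $\kappa^2$ terms, and $(\partial_sV_0+\dots)_\rho$. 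The normal component should then reproduce \eqref{eq:comparison-alpha2}, which is exactly how $\alpha_2$ was chosen. The tangential component of the bracket cannot be removed by $\alpha_2$. It must vanish by \eqref{eq:comparison-beta0}, presumably because it equals $-|Y_\rho|$ times the residual of that equation, matching the tangential identity \eqref{eq:general-tangential-leading-identity}. If the tangential component does not vanish identically, the fallback is to note that the full-vector defect must pair with $\phi_h$. We would then split $\phi_h=\pn{Y_h}\phi_h+\pt{Y_h}\phi_h$ and use Lemma~\ref{lem:frameexpansion} to move between $\vec\tau$ and $T_h$ at cost $O(h^2)$. This yields $Ch^4$.

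\emph{Step 3 (order four, tangential part; main obstacle).} We substitute the chosen $\alpha_2$, $\beta_0$ and $(V_0)_t$ into the $h^4$ coefficient of \eqref{eq:general-tangential-defect-expansion}. Here $(V_0)_t$ is computed along the flow, so we need a formula for $\partial_t\beta_0$. We obtain it by differentiating \eqref{eq:comparison-beta0} in time, which is awkward. Instead we keep $-\frac12((V_0)_{t\rho}/|Y_\rho|)_\rho\cdot\vec\tau$ expressed through \eqref{eq:general-tangential-leading-identity} applied to $(V_0)_t$. The $\beta_2$ term then cancels $S_4+S_2[V_0,Y]$ and the remaining explicit terms, provided the identity
\[
-\tfrac12\bigl((V_0)_{t\rho}/|Y_\rho|\bigr)_\rho\cdot\vec\tau=-\tfrac{\kappa^2}{8|Y_\rho|^2}\bigl[|Y_\rho|^4(\partial_s\beta_0-\kappa^2)\bigr]_\rho-\dots
\]
holds, matching the last two lines of \eqref{eq:comparison-beta2}. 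Verifying this identity requires $\partial_t|Y_\rho|=|Y_\rho|(\partial_s\beta_0-\kappa\alpha_0)$ and the time derivatives of $\kappa$ and $\vec\tau$. This lengthy symbolic computation is where I expect the difficulty, and I would organize it by writing everything as polynomials in $\kappa,\partial_s^k\kappa,\beta_0,\partial_s^k\beta_0,(|Y_\rho|)_\rho/|Y_\rho|$. Once the coefficient vanishes, the bound $Ch^6\norm{\beta_h}{0,h}$ follows directly from the lemma.
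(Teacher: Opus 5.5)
Your Steps 1 and 2 follow the paper's argument. Setting $\alpha_0=-\kappa$ removes the $O(1)$ term, and \eqref{eq:comparison-beta0} removes the $h^2$ tangential coefficient for both flows. For $Y^{[2]}$, the $h^2$ bracket of \eqref{eq:defect-expansion} vanishes as a vector field: its $\vec\tau$-component is $|Y_\rho|$ times the residual of \eqref{eq:comparison-beta0}, and its $\vec n$-component is cancelled by $\alpha_2$. The $h^2$ part of $(V_0)_t\cdot\vec n=-\kappa_t+\beta_0\,\vec\tau_t\cdot\vec n$ goes into the remainder. The vanishing of the $\vec\tau$-component is genuinely needed, and a direct frame computation gives it. Your fallback could not replace it, because a nonzero tangential $h^2$ coefficient tested against a tangential $\phi_h$ is of size $h^2\norm{\phi_h}{0,h}$.

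The gap is in Step 3, at the obstacle you identify. Applying \eqref{eq:general-tangential-leading-identity} to $(V_0)_t=\tilde\alpha\vec n+\tilde\beta\vec\tau$ does not avoid $\partial_t\beta_0$. Since $\tilde\beta=\partial_t\beta_0-\kappa\,\vec n_t\cdot\vec\tau$, the term $|Y_\rho|(-\partial_{ss}+\kappa^2)\partial_t\beta_0$ appears. It cannot be evaluated from $(|Y_\rho|)_t$, $\kappa_t$ and $\vec\tau_t$, while \eqref{eq:comparison-beta2} contains no $\partial_t\beta_0$, so your route cannot verify the required identity. The missing idea is to apply \eqref{eq:general-tangential-leading-identity} to $V_0$ itself, with $(\alpha,\beta)=(-\kappa,\beta_0)$. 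Then \eqref{eq:comparison-beta0} becomes the pointwise constraint $(\partial_sV_0)_\rho\cdot\vec\tau=-\frac14|Y_\rho|(|Y_\rho|)_\rho\kappa^2$. This constraint holds for every $t$ and may therefore be differentiated in time. Combined with $[\partial_t,\partial_s]=-(|Y_\rho|)_t|Y_\rho|^{-1}\partial_s$, it yields the exact identity
\begin{equation*}
 \Bigl(\frac{(V_0)_{t\rho}}{|Y_\rho|}\Bigr)_\rho\cdot\vec\tau =-\frac14\bigl(|Y_\rho|(|Y_\rho|)_\rho\kappa^2\bigr)_t +\Bigl(\frac{(|Y_\rho|)_t}{|Y_\rho|}\,\partial_sV_0\Bigr)_\rho\cdot\vec\tau -(\partial_sV_0)_\rho\cdot\vec\tau_t .
\end{equation*}
Here $\partial_t\beta_0$ never appears, and only $(|Y_\rho|)_t$, $\kappa_t$ and $\vec\tau_t$ along $Y^{[2]}$ are needed. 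Substituting them reproduces the last two lines of \eqref{eq:comparison-beta2}, but only up to an explicit $h^2$ remainder coming from the $h^2V_2$ part of $Y_t$. This remainder involves $\alpha_2,\beta_2$ and their derivatives up to second order, so your identity is not exact. Its $L^2$ bound $Ch^2$, which uses the $H^3$ regularity of $\alpha_2,\beta_2$, produces the factor $h^4\cdot h^2$. Two signs also need fixing. With $\partial_s\vec n=\kappa\vec\tau$ one has $(|Y_\rho|)_t=|Y_\rho|(\partial_s\beta_0+\kappa\alpha_0)+O(h^2)$. The target identity carries $+\frac{\kappa^2}{8|Y_\rho|^2}\bigl[|Y_\rho|^4(\partial_s\beta_0-\kappa^2)\bigr]_\rho$, which is what \eqref{eq:comparison-beta2} forces.
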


\begin{proof}
First, the operator $-\partial_{ss}+\kappa^2$ is invertible, thus $\beta_0[Y]$ and $\beta_2[Y]$ given by \eqref{eq:comparison-beta0} and \eqref{eq:comparison-beta2} are well-defined.

For $Y^{[0]}$, the $O(1)$ term in \eqref{eq:defect-expansion} vanishes since $\alpha_0=-\kappa$. $\beta_0$ given by \eqref{eq:comparison-beta0} also cancels the $O(h^2)$ term in \eqref{eq:general-tangential-defect-expansion}. This proves \eqref{eq:zeroth-comparison-defects}.

For \eqref{eq:second-comparison-defects}, we write $Y=Y^{[2]}$ for short. From \eqref{eq:Frenet-relations} we have
\begin{equation}\label{eq:alpha2-frame-derivatives}
 \begin{aligned}
 \vec n_\rho&=|Y_\rho|\kappa\vec\tau, &\vec\tau_\rho&=-|Y_\rho|\kappa\vec n,\\
 \vec n_{\rho\rho} &=(|Y_\rho|\kappa)_\rho\vec\tau -|Y_\rho|^2\kappa^2\vec n, &\vec\tau_{\rho\rho} &=-(|Y_\rho|\kappa)_\rho\vec n -|Y_\rho|^2\kappa^2\vec\tau.
 \end{aligned}
\end{equation}
Moreover,
\begin{equation}\label{eq:alpha2-velocity-stiffness}
 \begin{aligned}
 \partial_sV_0 &=-(\partial_s\kappa+\kappa\beta_0)\vec n +(\partial_s\beta_0-\kappa^2)\vec\tau,\\
 -(\partial_sV_0)_\rho\cdot\vec n &=|Y_\rho|\left(\partial_{ss}\kappa +\beta_0\partial_s\kappa +2\kappa\partial_s\beta_0-\kappa^3\right),\\
 -(\partial_sV_0)_\rho\cdot\vec\tau &=|Y_\rho|\left((-\partial_{ss}+\kappa^2)\beta_0 +3\kappa\partial_s\kappa\right).
 \end{aligned}
\end{equation}
Thus the tangential component of the $O(h^2)$ coefficient in \eqref{eq:defect-expansion} is
\begin{equation}\label{eq:alpha2-tangential-coefficient}
 \begin{aligned}
 &\Bigg[ |Y_\rho|\left(\alpha_2+\frac12(V_0)_t\cdot\vec n\right)\vec n -\frac1{12}(|Y_\rho|\alpha_0\vec n)_{\rho\rho} +\frac16(|Y_\rho|\vec n\otimes\vec n)_{\rho\rho}V_0\\
 &\qquad +\frac{|Y_\rho|^3\kappa^2}{24}(9\alpha_0\vec n-8V_0) -\left(\partial_sV_0+\frac{|Y_\rho|^2\kappa^2}{24}\vec\tau -\frac{(|Y_\rho|)_\rho\kappa}{12}\vec n\right)_\rho \Bigg]\cdot\vec \tau\\
 &=\frac14|Y_\rho|(|Y_\rho|)_\rho\kappa^2 +\frac14|Y_\rho|^2\kappa\kappa_\rho\\
 &\quad +\frac16\left( -3|Y_\rho|(|Y_\rho|)_\rho\kappa^2 -|Y_\rho|^2\kappa\kappa_\rho +2|Y_\rho|^3\kappa^2\beta_0 \right) -\frac13|Y_\rho|^3\kappa^2\beta_0\\
 &\quad +|Y_\rho|\left[ (-\partial_{ss}+\kappa^2)\beta_0 +3\kappa\partial_s\kappa \right] -\frac1{24}\bigl(|Y_\rho|^2\kappa^2\bigr)_\rho +\frac1{12}|Y_\rho|(|Y_\rho|)_\rho\kappa^2\\
 &=|Y_\rho|\left[ (-\partial_{ss}+\kappa^2)\beta_0 +3\kappa\partial_s\kappa -\frac14(|Y_\rho|)_\rho\kappa^2 \right] = 0.
 \end{aligned}
\end{equation}

Using the evolution equation of $\kappa$ and $\vec n$ in \cite[Lemma 37, Lemma 39]{BGN2020} gives
\begin{equation}\label{eq:corrected-flow-geometry}
 \begin{aligned}
 \kappa_t & =\partial_{ss}\kappa+\kappa^3+\beta_0\partial_s\kappa +h^2(-\partial_{ss}\alpha_2-\kappa^2\alpha_2 +\beta_2\partial_s\kappa),\\
 \vec{n}_t & =\left(\partial_s\kappa+\kappa\beta_0 +h^2(-\partial_s\alpha_2+\kappa\beta_2)\right)\vec\tau.\\
 \vec\tau_t & =\left(-\partial_s\kappa-\kappa\beta_0 +h^2(\partial_s\alpha_2-\kappa\beta_2)\right)\vec n.
 \end{aligned}
\end{equation}
Therefore,
\begin{equation}\label{eq:alpha2-normal-acceleration}
 \begin{aligned}
 (V_0)_t\cdot\vec n &=-\partial_{ss}\kappa-\kappa^3 -2\beta_0\partial_s\kappa-\kappa\beta_0^2\\
 &\quad+h^2\left[ \partial_{ss}\alpha_2+\kappa^2\alpha_2 +\beta_0\partial_s\alpha_2 -\beta_2(\partial_s\kappa+\kappa\beta_0)\right].
 \end{aligned}
\end{equation}

Substituting \eqref{eq:alpha2-velocity-stiffness}, \eqref{eq:alpha2-normal-acceleration}, and $\alpha_2$ given by \eqref{eq:comparison-alpha2} into \eqref{eq:defect-expansion}, one could verify that the normal component of the $O(h^2)$ coefficient vanishes. The remaining term is of the order $O(h^2)$, which is $\frac{h^2|Y_\rho|}{2}\left( \partial_{ss}\alpha_2+\kappa^2\alpha_2 +\beta_0\partial_s\alpha_2 -\beta_2(\partial_s\kappa+\kappa\beta_0)\right)$ from $(V_0)_t \cdot \vec{n}$. Under the regularity assumptions of the theorem,
\begin{equation}\label{eq:alpha2-normal-remainder-bound}
 \begin{aligned}
 &\left\| \frac{h^2|Y_\rho|}{2}\left[ \partial_{ss}\alpha_2+\kappa^2\alpha_2 +\beta_0\partial_s\alpha_2 -\beta_2(\partial_s\kappa+\kappa\beta_0)\right] \right\|_{L^2(I)}\\
 &\leq Ch^2\left( \norm{\alpha_2}{H^2(I)}+\norm{\beta_2}{L^2(I)}\right) \leq Ch^2,
 \end{aligned}
\end{equation}
which proves $|d_h^{[2]}(\phi_h)|\leq Ch^4\norm{\phi_h}{0,h}$.

To treat the $O(h^4)$ in the tangential defect \eqref{eq:general-tangential-defect-expansion}, the only unknown term is $\left(\frac{(V_0)_{t\rho}}{|Y_\rho|}\right)_\rho \cdot \vec{\tau} = (\partial_s \partial_t(V_0))_\rho \cdot \vec{\tau}$. Using commutator $[\partial_t,\partial_s] =-(|Y_\rho|)_t|Y_\rho|^{-1}\partial_s$, together with \eqref{eq:alpha2-velocity-stiffness} gives
\begin{equation}\label{eq:comparison-acceleration-identity}
 \begin{aligned}
 \left(\frac{(V_0)_{t\rho}}{|Y_\rho|}\right)_\rho\cdot\vec\tau ={}&-\frac14 \left(|Y_\rho|(|Y_\rho|)_\rho\kappa^2\right)_t+\left(\frac{(|Y_\rho|)_t}{|Y_\rho|} \partial_sV_0\right)_\rho\cdot\vec\tau -(\partial_sV_0)_\rho\cdot\vec\tau_t.
 \end{aligned}
\end{equation}
The evolution equation \eqref{eq:comparison-flows} gives
\begin{equation}\label{eq:beta2-metric-evolution}
 \frac{(|Y_\rho|)_t}{|Y_\rho|} =\partial_sY_t\cdot\vec\tau =\partial_s\beta_0-\kappa^2 +h^2(\kappa\alpha_2+\partial_s\beta_2).
\end{equation}

Substituting \eqref{eq:beta2-metric-evolution} into \eqref{eq:comparison-acceleration-identity} gives the exact identity
\begin{equation}\label{eq:beta2-explicit-acceleration-remainder}
 \begin{aligned}
 &\frac1{|Y_\rho|} \left(\frac{(V_0)_{t\rho}}{|Y_\rho|}\right)_\rho\cdot\vec\tau +\frac{\kappa^2}{4|Y_\rho|^3} \left[|Y_\rho|^4(\partial_s\beta_0-\kappa^2)\right]_\rho\\
 &\quad+\frac{(|Y_\rho|)_\rho\kappa}{2} \left(\partial_{ss}\kappa+\kappa^3 +\beta_0\partial_s\kappa\right) +\frac12\partial_s \left[(\partial_s\kappa+\kappa\beta_0)^2\right]\\
 &=h^2\left( -\frac34(|Y_\rho|)_\rho\kappa^2 (\kappa\alpha_2+\partial_s\beta_2)+\left(\frac{\partial_s\beta_0-\kappa^2}{|Y_\rho|} -\frac{|Y_\rho|\kappa^2}{4}\right) (\kappa\alpha_2+\partial_s\beta_2)_\rho\right.\\
 &\quad \left. +\frac{(|Y_\rho|)_\rho\kappa}{2} \left(\partial_{ss}\alpha_2+\kappa^2\alpha_2 -\beta_2\partial_s\kappa\right)\right.\\
 &\quad \left. +(\partial_s\alpha_2-\kappa\beta_2) \left(\partial_{ss}\kappa+\beta_0\partial_s\kappa +2\kappa\partial_s\beta_0-\kappa^3\right) \right).
 \end{aligned}
\end{equation}
Since $\alpha_2, \beta_0, \beta_2 \in H^2(I)$, the $L^2(I)$ norm of the RHS is bounded by $Ch^2$. Applying \eqref{eq:general-tangential-defect-expansion} and \eqref{eq:comparison-beta2} to \eqref{eq:beta2-explicit-acceleration-remainder} therefore gives
\begin{equation}\label{eq:beta2-final-tangential-defect}
 \left|d_h^{[2]}\bigl(\Ih[\beta_hT_h[Y_h^{[2]}]]\bigr)\right| \leq Ch^4h^2\norm{\beta_h}{0,h} +Ch^6\norm{\beta_h}{0,h} \leq Ch^6\norm{\beta_h}{0,h}.
\end{equation}
This proves the second estimate in \eqref{eq:second-comparison-defects}.
\end{proof}

\begin{remark}\label{rem:lambda}
For $\tau=\lambda h^2$ with a fixed $\lambda>0$, the tangential velocity $\beta_0$ satisfies
\begin{equation}\label{eq:comparison-beta0-lambda}
(-\partial_{ss}+\kappa^2)\beta_0 =\frac{1}{4\lambda}(|Y_\rho|)_\rho\kappa^2 -3\kappa\partial_s\kappa .
\end{equation}
Formally letting $\lambda\to0$, which corresponds to the semi-discrete BGN method,
\begin{equation*}
    \frac{1}{4}(|Y_\rho|)_\rho\kappa^2=0.
\end{equation*}
Hence, either $\kappa=0$ or $\partial_\rho|Y_\rho|=0$. This is the well-known equipartition property of the BGN method \cite[Theorem 79, (ii)]{BGN2020}: any two neighbouring edges are either parallel $(\kappa=0)$ or of equal length $(\partial_\rho|Y_\rho|=0)$.
\end{remark}

\subsection{Convergence of the approximation flows}
\begin{assumption}\label{ass:existence}
Suppose that $Y^{[0]}(0)= Y^{[2]}(0)$ and the classical solutions for $Y^{[0]}(t)$, $Y^{[2]}(t)$ exist on a common interval $[0,T]$, and belong to $C([0,T];W^{3,\infty}(I;\mathbb R^2)) \cap C^1([0,T];H^1(I;\mathbb R^2))$. Assume that, independently of $h$,
\begin{equation}\label{eq:continuous-regularity}
 \begin{aligned}
 &\inf_{I\times[0,T]}|Y^{[0]}_\rho|\geq c_0>0, \inf_{I\times[0,T]}|Y^{[2]}_\rho|\geq c_0>0,\\
 &\sup_{0\leq t\leq T} \bigl(\|Y^{[2]}(t)\|_{W^{3,\infty}(I)} +\|Y^{[0]}(t)\|_{W^{3,\infty}(I)} +\|Y^{[0]}_t(t)\|_{W^{2,\infty}(I)}\\
 &\qquad + \|V_2[Y^{[2]}]\|_{L^2(0,T;H^1(I))}\bigr)\leq M.
 \end{aligned}
\end{equation}
\end{assumption}

\begin{lemma}\label{lem:continuous-stability}
Set $e_x=Y^{[2]}-Y^{[0]}$ and $e_v=Y^{[2]}_t-Y^{[0]}_t$. Under Assumption \ref{ass:existence}, there are $h_0>0$ and $C>0$, depending only on $c_0,M,T$ such that, for $h\leq h_0$,
\begin{equation}\label{eq:continuous-stability-result}
 \begin{aligned}
 &\sup_{0\leq t\leq T}\|e_x(t)\|_{H^1(I)}\leq Ch^2, \qquad \sup_{0\leq t\leq T}\|e_x(t)\|_{W^{1,\infty}(I)} \leq h^1.
 \end{aligned}
\end{equation}
\end{lemma}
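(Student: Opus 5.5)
We plan to treat $Y^{[2]}$ as a perturbation of $Y^{[0]}$ and run a continuous energy (Gronwall) argument on the parameter interval $I$. The two flows in \eqref{eq:comparison-flows} have the same initial data. Their velocities differ by $h^2V_2[Y^{[2]}]$ plus the difference of $V_0$ evaluated on the two curves. So
\begin{equation*}
 \partial_t e_x = V_0[Y^{[2]}]-V_0[Y^{[0]}] + h^2V_2[Y^{[2]}],\qquad e_x(0)=0 .
\end{equation*}
The difficulty is that $V_0[Y]=-\kappa\vec n+\beta_0\vec\tau$ is a second order quasilinear operator in $Y$. Indeed, $-\kappa\vec n=\partial_{ss}Y$, and $\beta_0$ is obtained from \eqref{eq:comparison-beta0}, whose right-hand side contains $\kappa$, $\partial_s\kappa$ and $(|Y_\rho|)_\rho$. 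Hence we must exploit parabolicity rather than Lipschitz bounds in $W^{3,\infty}$ alone.

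\textbf{Step 1: linearization.} Write
\begin{equation*}
 -\kappa[Y]\vec n[Y]=\frac{1}{|Y_\rho|}\Bigl(\frac{Y_\rho}{|Y_\rho|}\Bigr)_\rho .
\end{equation*}
The difference of this term between $Y^{[2]}$ and $Y^{[0]}$ equals
\begin{equation*}
 \frac{1}{|Y^{[0]}_\rho|^2}\bigl(\vec n\otimes\vec n\bigr)[Y^{[0]}]\,e_{x,\rho\rho}+\mathcal R_1 ,
\end{equation*}
with $\|\mathcal R_1\|_{L^2}\le C\|e_x\|_{H^1}$. Here the constant uses the uniform $W^{3,\infty}$ bounds and the lower bound $c_0$ of Assumption~\ref{ass:existence}. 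For the tangential part, the elliptic operator $-\partial_{ss}+\kappa^2$ is uniformly invertible, and its coefficients depend on $Y$ only through $W^{2,\infty}$ quantities. Taking the difference of \eqref{eq:comparison-beta0} on the two curves and applying elliptic regularity, we obtain $\|\beta_0[Y^{[2]}]-\beta_0[Y^{[0]}]\|_{H^1}\le C\|e_x\|_{H^2}$ and $\|\beta_0[Y^{[2]}]-\beta_0[Y^{[0]}]\|_{L^2}\le C\|e_x\|_{H^1}$. The second estimate follows by writing the right-hand side in divergence form and using $H^{-1}\to H^1$ regularity. The same holds for the change of $\vec\tau$.

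\textbf{Step 2: energy estimate.} We test the error equation with $-e_{x,\rho\rho}$, or equivalently differentiate $\tfrac12\|e_{x,\rho}\|_0^2$ in time, and add the $L^2$ part. The normal second-order term gives the good quantity $-c\|e_{x,\rho\rho}\cdot\vec n\|_0^2$. The tangential component of $e_{x,\rho\rho}$ is not controlled by this dissipation: the flow is degenerate, since reparametrizations are not damped by $V_0$. This is the step I expect to be the main obstacle.

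We plan to overcome it as follows. The tangential velocity difference $(\beta_0[Y^{[2]}]-\beta_0[Y^{[0]}])\vec\tau$ is an $H^1$ function, controlled by $\|e_x\|_{H^1}$ plus the dissipated normal quantity. To see this, note that the highest order dependence of the right-hand side of \eqref{eq:comparison-beta0} is through $\partial_s\kappa$ and $(|Y_\rho|)_\rho$. These involve $Y_{\rho\rho\rho}$ and $Y_{\rho\rho}\cdot\vec\tau$ respectively, and they gain one derivative from the inversion of $-\partial_{ss}+\kappa^2$. The term with $(|Y_\rho|)_\rho\kappa^2$ is in fact a lower order transport of $e_{x,\rho}\cdot\vec\tau$. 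After integrating by parts in the tangential contribution, it is bounded by
\begin{equation*}
 \epsilon\|e_{x,\rho\rho}\cdot\vec n\|_0^2+C_\epsilon\|e_x\|_{H^1}^2 .
\end{equation*}
The source term contributes $h^2\|V_2[Y^{[2]}]\|_{H^1}\|e_x\|_{H^1}$, after moving one derivative onto $V_2$. Gronwall's inequality then gives
\begin{equation*}
 \sup_t\|e_x\|_{H^1}^2\le C h^4\|V_2\|_{L^2(0,T;H^1)}^2\le Ch^4 ,
\end{equation*}
which is the first estimate. The smallness assumptions needed in Step 1 (closeness of $|Y_\rho|$, etc.) are ensured by a continuity/bootstrap argument on the maximal time up to which $\|e_x\|_{W^{1,\infty}}\le h$.

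\textbf{Step 3: $W^{1,\infty}$ bound and closing the bootstrap.} Both curves are bounded in $W^{3,\infty}$, so interpolation (Gagliardo--Nirenberg) gives
\begin{equation*}
 \|e_{x,\rho}\|_{L^\infty}\le C\|e_x\|_{H^1}^{1/2}\|e_x\|_{H^2}^{1/2}\le C(h^2)^{1/2}M^{1/2}=Ch .
\end{equation*}
With a sharper interpolation, $\|e_x\|_{W^{1,\infty}}\le C\|e_x\|_{H^1}^{2/3}\|e_x\|_{W^{3,\infty}}^{1/3}\le Ch^{4/3}$. This is strictly smaller than $h$ for $h\le h_0$, so the bootstrap closes and yields the stated bound $h^1$.
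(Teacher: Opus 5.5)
Your argument is correct in its main lines, but it takes a different route from the paper.

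\textbf{Your route.} You work with the strong form $\partial_t e_x=V_0[Y^{[2]}]-V_0[Y^{[0]}]+h^2V_2[Y^{[2]}]$ and test with $-e_{x,\rho\rho}$. The linearized curvature term gives the dissipation $\|e_{x,\rho\rho}\cdot\vec n[Y^{[0]}]\|_0^2$. For the tangential velocity you prove an elliptic $H^1$ estimate for $\beta_0[Y^{[2]}]-\beta_0[Y^{[0]}]$ in terms of $\|\kappa[Y^{[2]}]-\kappa[Y^{[0]}]\|_0+\|e_x\|_{H^1}$. This works by writing $3\kappa\partial_s\kappa$ as $\tfrac32\partial_s(\kappa^2)$, putting $(|Y_\rho|)_\rho\kappa^2$ in divergence form, and then integrating by parts against $e_{x,\rho}$.

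\textbf{The paper's route.} The paper uses only weak forms and the velocity error.
\begin{itemize}
\item It tests the normal variational equation with $e_v$. The derivative trick then yields $\frac{\mathrm d}{\mathrm dt}\|\partial_{s[Y^{[0]}]}e_x\cdot\vec n[Y^{[0]}]\|^2$ together with the dissipation $\|e_v\cdot\vec n[Y^{[2]}]\|^2$.
\item It tests the weak identity \eqref{eq:continuous-tangential-constraint}, which encodes $\beta_0$, with $\beta=e_v\cdot\vec\tau[Y^{[2]}]$. This gives $H^1$ control of $P^{\mathrm{tan}}[Y^{[2]}]e_v$.
\item It integrates $P^{\mathrm{tan}}[Y^{[0]}]e_x$ in time and closes with the norm equivalence of Lemma~\ref{lem:norm}.
\end{itemize}

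\textbf{What each buys.} Your route is shorter and is the classical way to treat a continuous quasilinear perturbation. It needs second derivatives of the error and elliptic regularity of the $\beta_0$-difference, and neither has a counterpart for piecewise linear elements. The paper never differentiates $e_x$ twice, and its proof is deliberately a template for the discrete stability argument in Theorem~\ref{thm:main2}. Your interpolation bound $\|e_x\|_{W^{1,\infty}}\le Ch^{4/3}$ is weaker than the paper's $Ch^{3/2}$, but it closes the bootstrap just as well.

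\textbf{A detail you must supply.} The bound $\|\mathcal R_1\|_{L^2}\le C\|e_x\|_{H^1}$ is not enough once $\mathcal R_1$ is paired with $e_{x,\rho\rho}$. Its tangential part meets the undamped component $e_{x,\rho\rho}\cdot\vec\tau$, which is exactly the obstacle you identified. What saves you is the structure
\[
\mathcal R_1\cdot\vec\tau[Y^{[0]}]=-\kappa[Y^{[2]}]\,\vec n[Y^{[2]}]\cdot\vec\tau[Y^{[0]}].
\]
By the Frenet relations, its $\rho$-derivative involves only $e_{x,\rho}$ and $\kappa[Y^{[2]}]-\kappa[Y^{[0]}]$, and not $e_{x,\rho\rho}\cdot\vec\tau$. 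Hence you should write $e_{x,\rho\rho}\cdot\vec\tau=(e_{x,\rho}\cdot\vec\tau)_\rho-e_{x,\rho}\cdot\vec\tau_\rho$ and integrate by parts, exactly as for $\beta_0$.

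\textbf{Minor imprecisions.} Your $L^2$ bound on the $\beta_0$-difference by $\|e_x\|_{H^1}$ is an $H^{-2}\to L^2$ duality statement, not $H^{-1}\to H^1$ regularity. Likewise, inverting $-\partial_{ss}+\kappa^2$ gains two derivatives, not one. Neither point affects the argument, because you only use the $H^1$ bound with normal dissipation.
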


\begin{proof}
For notational convenience, we take $X = Y^{[2]}$ and $Y = Y^{[0]}$. We first work on an interval $[0, t^*]$ on which $\|e_x\|_{W^{1,\infty}(I)}\leq h^1$.

\textit{Step 1: normal error.} For any $\phi\in H^1$, \eqref{eq:comparison-flows} gives
\begin{equation}\label{eq:continuous-normal-variational}
 \begin{aligned}
 &\int_{\Gamma[X]}(X_t\cdot\vec n[X]) (\phi\cdot\vec n[X]) +\int_{\Gamma[X]}\partial_{s[X]}X\cdot \partial_{s[X]}\phi =h^2\int_{\Gamma[X]}\alpha_2[X](\phi\cdot\vec n[X]),\\
 &\int_{\Gamma[Y]}(Y_t\cdot\vec n[Y]) (\phi\cdot\vec n[Y]) +\int_{\Gamma[Y]}\partial_{s[Y]}Y\cdot \partial_{s[Y]}\phi=0.
 \end{aligned}
\end{equation}
Testing with $e_v$ and subtracting yields
\begin{equation}\label{eq:continuous-normal-test}
 \begin{aligned}
 &\int_{\Gamma[X]}(e_v\cdot\vec n[X])^2 +\int_I\frac{(e_x)_\rho\cdot\vec n[Y]}{|Y_\rho|}\vec n[Y]\cdot(e_v)_\rho\,\mathrm d\rho\\
 &=h^2\int_{\Gamma[X]}\alpha_2[X](e_v\cdot\vec n[X])-\int_I\Bigl( |X_\rho|(Y_t\cdot\vec n[X])\vec n[X]\\
 &\qquad -|Y_\rho|(Y_t\cdot\vec n[Y])\vec n[Y] \Bigr)\cdot e_v\,\mathrm d\rho \\
 & \qquad + \int_{I}\left(\frac{|Y_\rho|-|X_\rho|}{|Y_\rho||X_\rho|} \bigl((e_x)_\rho\cdot\vec n[Y]\bigr)\vec n[Y] -\frac12|\vec\tau[X]-\vec\tau[Y]|^2\vec\tau[Y]\right) \cdot (e_v)_\rho \, \mathrm d\rho.
 \end{aligned}
\end{equation}
where Lemma \ref{lem:normalforce} is used to simplify $\int_{\Gamma[X]}\partial_{s[X]}X\cdot \partial_{s[X]}\phi - \int_{\Gamma[Y]}\partial_{s[Y]}Y\cdot \partial_{s[Y]}\phi$.

The right-hand side is bounded by $C h^2  \left\|e_v\right\|_{L^2} + C \left\|e_x\right\|_{H^1} \left\|e_v\right\|_{L^2}$. For the left-hand side, the derivative trick gives
\begin{equation}\label{eq:continuous-normal-derivative}
 \begin{aligned}
 &\int_{\Gamma[Y]}(\partial_{s[Y]}e_x\cdot\vec n[Y]) (\partial_{s[Y]}e_v\cdot\vec n[Y]) =\frac12\frac{\mathrm d}{\mathrm dt} \left\|\partial_{s[Y]}e_x\cdot\vec n[Y] \right\|_{L^2(\Gamma[Y])}^2\\
 &\quad-\int_I \frac{\bigl((e_x)_\rho\cdot\vec n[Y]\bigr) \bigl((e_x)_\rho\cdot\partial_t\vec n[Y]\bigr)} {|Y_\rho|}\,\mathrm d\rho +\frac12\int_I\frac{\partial_t|Y_\rho|}{|Y_\rho|^2} |(e_x)_\rho\cdot\vec n[Y]|^2\,\mathrm d\rho\\
 &\geq\frac12\frac{\mathrm d}{\mathrm dt} \left\|\partial_{s[Y]}e_x\cdot\vec n[Y] \right\|_{L^2(\Gamma[Y])}^2-C\|e_x\|_{H^1(I)}^2.
 \end{aligned}
\end{equation}
Combining \eqref{eq:continuous-normal-test}-- \eqref{eq:continuous-normal-derivative}, we obtain
\begin{equation}\label{eq:continuous-normal-estimate}
 \begin{aligned}
 &\frac12\frac{\mathrm d}{\mathrm dt} \left\|\partial_{s[Y]}e_x\cdot\vec n[Y] \right\|_{L^2(\Gamma[Y])}^2 +c\|e_v\cdot\vec n[X]\|_{L^2}^2\\
 &\qquad \leq C h^2  \left\|e_v\right\|_{L^2} +C\left\|e_x\right\|_{H^1}\left\|e_v\right\|_{L^2}+C\left\|e_x\right\|_{H^1}^2.
 \end{aligned}
\end{equation}

\textit{Step 2: tangential error.} For $Z=X,Y$ and any $\beta\in H^1(I)$, direct computation gives
\begin{equation}\label{eq:continuous-tangential-constraint}
 \begin{aligned}
 &\int_{\Gamma[Z]}\partial_{s[Z]}V_0[Z]\cdot \partial_{s[Z]}(\beta\vec\tau[Z])\\
 &=\int_{\Gamma[Z]}\Bigl( \beta H[Z]\partial_{s[Z]}H[Z] +\beta\beta_0[Z]H[Z]^2\\
 &\qquad +(\partial_{s[Z]}\beta_0[Z])(\partial_{s[Z]}\beta) -H[Z]^2\partial_{s[Z]}\beta \Bigr)\\
 &=\int_{\Gamma[Z]}\beta\Bigl( (-\partial_{s[Z]}^2+H[Z]^2)\beta_0[Z] +3H[Z]\partial_{s[Z]}H[Z]\Bigr)\\
 &=\frac14\int_I \frac{\partial_\rho|Z_\rho|}{|Z_\rho|} |\partial_\rho\vec\tau[Z]|^2\beta\,\mathrm d\rho.
 \end{aligned}
\end{equation}
The velocity equations give, for any $\phi\in H^1$,
\begin{equation}\label{eq:continuous-velocity-variational}
 \begin{aligned}
 &\int_{\Gamma[X]}\partial_{s[X]}X_t\cdot \partial_{s[X]}\phi =\int_{\Gamma[X]}\partial_{s[X]}V_0[X]\cdot \partial_{s[X]}\phi\\
 &\qquad +h^2\int_{\Gamma[X]}\partial_{s[X]}V_2[X]\cdot \partial_{s[X]}\phi,\\
 &\int_{\Gamma[Y]}\partial_{s[Y]}Y_t\cdot \partial_{s[Y]}\phi =\int_{\Gamma[Y]}\partial_{s[Y]}V_0[Y]\cdot \partial_{s[Y]}\phi.
 \end{aligned}
\end{equation}
Take $\phi=\beta\vec\tau[X]$ and $\phi=\beta\vec\tau[Y]$, respectively. Subtracting and using \eqref{eq:continuous-tangential-constraint} gives
\begin{equation}\label{eq:continuous-paired-tests}
 \begin{aligned}
 &\int_{\Gamma[X]}\partial_{s[X]}e_v\cdot \partial_{s[X]}(\beta\vec\tau[X]) =h^2\int_{\Gamma[X]}\partial_{s[X]}V_2[X]\cdot \partial_{s[X]}(\beta\vec\tau[X])\\
 &\quad+\frac14\int_I\beta\left( \frac{\partial_\rho|X_\rho|}{|X_\rho|} |\partial_\rho\vec\tau[X]|^2 -\frac{\partial_\rho|Y_\rho|}{|Y_\rho|} |\partial_\rho\vec\tau[Y]|^2\right)\,\mathrm d\rho\\
 &\quad-\left( \int_{\Gamma[X]}\partial_{s[X]}Y_t\cdot \partial_{s[X]}(\beta\vec\tau[X]) -\int_{\Gamma[Y]}\partial_{s[Y]}Y_t\cdot \partial_{s[Y]}(\beta\vec\tau[Y]) \right)\\
 &:=h^2\int_{\Gamma[X]}\partial_{s[X]}V_2[X]\cdot \partial_{s[X]}(\beta\vec\tau[X]) +J_{21}(\beta)+J_{22}(\beta).
 \end{aligned}
\end{equation}
For $J_{21}$,  integration by parts gives
\begin{equation}\label{eq:continuous-j21-splitting}
 \begin{aligned}
 J_{21}(\beta) &=-\frac{1}{4}\int_I(\vec\tau[X]-\vec\tau[Y])\cdot \partial_\rho\left[ \beta\frac{\partial_\rho|X_\rho|}{|X_\rho|} (\partial_\rho\vec\tau[X]+\partial_\rho\vec\tau[Y]) \right]\,\mathrm d\rho\\
 &\quad-\frac{1}{4}\int_I \frac{|X_\rho|-|Y_\rho|}{|Y_\rho|} \partial_\rho\left( \beta\frac{|Y_\rho|}{|X_\rho|} |\partial_\rho\vec\tau[Y]|^2 \right)\,\mathrm d\rho.
 \end{aligned}
\end{equation}
Similarly,
\begin{equation}\label{eq:continuous-j22-splitting}
 \begin{aligned}
 J_{22}(\beta) &=-\int_I\left(\frac1{|X_\rho|}-\frac1{|Y_\rho|}\right) (Y_t)_\rho\cdot(\beta\vec\tau[X])_\rho\,\mathrm d\rho\\
 &\quad+\int_I\partial_\rho\left( \frac{(Y_t)_\rho}{|Y_\rho|}\right) \cdot\beta(\vec\tau[X]-\vec\tau[Y])\,\mathrm d\rho.
 \end{aligned}
\end{equation}
Hence
\begin{equation}\label{eq:continuous-tangential-rhs}
 \begin{aligned}
 &h^2\left|\int_{\Gamma[X]}\partial_{s[X]}V_2[X]\cdot \partial_{s[X]}(\beta\vec\tau[X])\right| \leq C h^2\left\|\beta\right\|_{H^1},\\
 &|J_{21}(\beta)|+|J_{22}(\beta)| \leq C\left\|e_x\right\|_{H^1}\left\|\beta\right\|_{H^1}.
 \end{aligned}
\end{equation}

Take $\beta=e_v\cdot\vec\tau[X]$, so that $\beta\vec\tau[X]=P^{\mathrm{tan}}[X]e_v$. For the left-hand side,  integration by parts and $P^N[X]e_v\cdot\vec\tau[X]=0$ give
\begin{equation}\label{eq:continuous-tangential-mixed}
 \begin{aligned}
 &\int_{\Gamma[X]}\partial_{s[X]}(P^N[X]e_v)\cdot \partial_{s[X]}(\beta\vec\tau[X])\\
 &=-\int_{\Gamma[X]}P^N[X]e_v\cdot \left(2(\partial_{s[X]}\beta)\partial_{s[X]}\vec\tau[X] +\beta\partial_{s[X]}^2\vec\tau[X]\right) \\
 &\geq-C\|e_v\cdot\vec n[X]\|_{L^2(I)} \|P^{\mathrm{tan}}[X]e_v\|_{H^1(I)}.
 \end{aligned}
\end{equation}
Consequently,
\begin{equation}\label{eq:continuous-tangential-lhs}
 \begin{aligned}
 &\int_{\Gamma[X]}\partial_{s[X]}e_v\cdot \partial_{s[X]}(P^{\mathrm{tan}}[X]e_v)\\
 &=\int_{\Gamma[X]} |\partial_{s[X]}(P^{\mathrm{tan}}[X]e_v)|^2 +\int_{\Gamma[X]}\partial_{s[X]}(P^N[X]e_v)\cdot \partial_{s[X]}(P^{\mathrm{tan}}[X]e_v)\\
 &\geq \int_{\Gamma[X]} |\partial_{s[X]}(P^{\mathrm{tan}}[X]e_v)|^2 -C\|e_v\cdot\vec n[X]\|_{L^2(I)} \|P^{\mathrm{tan}}[X]e_v\|_{H^1(I)}.
 \end{aligned}
\end{equation}

Follow the same steps in Lemma \ref{lem:tancontrol} to obtain
\begin{equation}\label{eq:continuous-tangent-poincare}
 \begin{aligned}
 \|P^{\mathrm{tan}}[X]e_v\|_{H^1}^2 \leq C \int_{\Gamma[X]} |\partial_{s[X]}(P^{\mathrm{tan}}[X]e_v)|^2.
 \end{aligned}
\end{equation}

Combining \eqref{eq:continuous-paired-tests}, \eqref{eq:continuous-tangential-rhs}, and \eqref{eq:continuous-tangential-lhs}, we obtain
\begin{equation}\label{eq:continuous-tangential-control}
 \|P^{\mathrm{tan}}[X]e_v\|_{H^1(I)} \leq C\bigl(h^2+\|e_x\|_{H^1(I)}+\|e_v\cdot\vec n[X]\|_{L^2(I)}\bigr).
\end{equation}

\textit{Step 3: recover the full norm.} From Lemma \ref{lem:norm}, we have
\begin{equation}\label{eq:continuous-norm-equivalence}
 \|e_x\|_{H^1(I)}^2\sim \left\|\partial_{s[Y]}e_x\cdot\vec n[Y] \right\|_{L^2(\Gamma[Y])}^2 +\|P^{\mathrm{tan}}[Y]e_x\|_{H^1(I)}^2+\|e_x\|_{L^2(I)}^2.
\end{equation}
Indeed, the identity $\partial_\rho(e_x\cdot\vec\tau[Y]) =(e_x)_\rho\cdot\vec\tau[Y]+e_x\cdot\partial_\rho\vec\tau[Y]$, together with the normal component of $(e_x)_\rho$, gives both inequalities. Since $(e_x)_t=e_v$,
\begin{equation}\label{eq:continuous-position-estimate}
 \frac12\frac{\mathrm d}{\mathrm dt}\|e_x\|_{L^2(I)}^2 \leq\|e_x\|_{L^2(I)}\|e_v\|_{L^2(I)}.
\end{equation}
For the tangential position error, use
\begin{equation}\label{eq:continuous-projection-derivative}
 \partial_t(P^{\mathrm{tan}}[Y]e_x) =P^{\mathrm{tan}}[X]e_v +(P^{\mathrm{tan}}[Y]-P^{\mathrm{tan}}[X])e_v +(\partial_tP^{\mathrm{tan}}[Y])e_x.
\end{equation}
We have $\|P^{\mathrm{tan}}[Y]e_x\|_{W^{2,\infty}(I)}\leq C$ and $\|P^{\mathrm{tan}}[Y]-P^{\mathrm{tan}}[X]\|_{L^2(I)}\leq C\|e_x\|_{H^1(I)}$. Together with $\|\partial_tP^{\mathrm{tan}}[Y]\|_{W^{1,\infty}(I)}\leq C$ from \eqref{eq:continuous-regularity}, we obtain
\begin{equation}\label{eq:continuous-tangential-position}
 \begin{aligned}
 &\frac12\frac{\mathrm d}{\mathrm dt} \|P^{\mathrm{tan}}[Y]e_x\|_{H^1(I)}^2 \leq C\|e_x\|_{H^1(I)}\|P^{\mathrm{tan}}[X]e_v\|_{H^1(I)}\\
 &\qquad +C\|e_x\|_{H^1(I)}\|e_v\|_{L^2(I)}+C\|e_x\|_{H^1(I)}^2.
 \end{aligned}
\end{equation}
Add \eqref{eq:continuous-normal-estimate}, \eqref{eq:continuous-tangential-control}, \eqref{eq:continuous-tangential-position}, we obtain
\begin{equation}\label{eq:continuous-energy-estimate}
 \begin{aligned}
 &\frac12\frac{\mathrm d}{\mathrm dt} \left( \left\|\partial_{s[Y]}e_x\cdot\vec n[Y] \right\|_{L^2(\Gamma[Y])}^2 +\|P^{\mathrm{tan}}[Y]e_x\|_{H^1(I)}^2+\|e_x\|_{L^2(I)}^2 \right)\\
 &\qquad+c\|e_v\cdot\vec n[X]\|_{L^2(I)}^2 +c\|P^{\mathrm{tan}}[X]e_v\|_{H^1(I)}^2\\
 &\leq C h^4 +C\left( \left\|\partial_{s[Y]}e_x\cdot\vec n[Y] \right\|_{L^2(\Gamma[Y])}^2 +\|P^{\mathrm{tan}}[Y]e_x\|_{H^1(I)}^2+\|e_x\|_{L^2(I)}^2 \right).
 \end{aligned}
\end{equation}
Since $e_x(0)=0$, Gronwall and the norm equivalence give
\begin{equation}\label{eq:continuous-gronwall}
 \begin{aligned}
 &\sup_{0\leq s\leq t}\|e_x(s)\|_{H^1(I)}^2 +\int_0^t\bigl(\|e_v\cdot\vec n[X]\|_{L^2(I)}^2 +\|P^{\mathrm{tan}}[X]e_v\|_{H^1(I)}^2\bigr)\leq Ch^4,
 \end{aligned}
\end{equation}
which gives the desired estimate.

Finally, continuous interpolation gives
\begin{equation}\label{eq:continuous-interpolation}
 \begin{aligned}
 \|e_x\|_{W^{1,\infty}(I)} &\leq C\left(\|e_x\|_{H^1(I)} +\|e_x\|_{H^1(I)}^{3/4}\|e_x\|_{H^3(I)}^{1/4}\right)\\
 &\leq C \left\|e_x\right\|_{H^1(I)} + C \left\|e_x\right\|_{H^1(I)}^{3/4}\left(\left\|X\right\|_{H^3(I)} + \left\|Y\right\|_{H^3(I)}^2\right)^{1/4} \leq Ch^{3/2}.
 \end{aligned}
\end{equation}
Since $1<3/2$, choosing $h_0$ sufficiently small makes $Ch^{3/2}\leq h^1$. Continuity therefore implies $t^*=T$, which proves \eqref{eq:continuous-stability-result}.
\end{proof}

\section{Proof of the main theorem}\label{sec:proof}
We first prove the following convergence between $X_h$ and the approximated curve $Y^{[0]}, Y^{[2]}$ in $L^{\infty} H^1$ norm.
\begin{theorem}[Convergence for $\tau=h^2$]\label{thm:main2}
Under Assumption~\ref{ass:regular}, let $Y^{[2]}$ as constructed in \eqref{eq:comparison-flows}. Fix $0<T\leq T_*$ and put $M_h=\lfloor T/\tau\rfloor$. Set
\begin{equation*}
 Y_h^m=\Ih Y^{[2]}(t_m=m \tau),\qquad e_x^m=X_h^m-Y_h^m,\qquad e_v^{m+1}=\frac{1}{\tau} (e_x^{m+1} - e_x^m).
\end{equation*}
There are $h_0>0$ and $C>0$, independent of $h$, such that the original BGN steps are well defined, the numerical polygons are embedded, and
\begin{equation}\label{eq:error bound h4}
 \max_{m}\norm{e_x^m}{1,h}\leq Ch^4,
\end{equation}
Consequently,
\begin{equation}\label{eq:error bound h2}
 \max_m\left(\norm{X_h^m-Y^{[0]}(t_m)}{L^2(I)} +h\norm{X_h^m-Y^{[0]}(t_m)}{H^1(I)}\right)\leq Ch^2.
\end{equation}
\end{theorem}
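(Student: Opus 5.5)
We argue by a discrete energy estimate for the error $e_x^m$, in the spirit of the proof of Lemma~\ref{lem:continuous-stability}, with Lemmas~\ref{lem:tancontrol}--\ref{lem:force} as the discrete stability tools. The improved defect of Theorem~\ref{thm:comparison-flow-defects} supplies consistency. We use a bootstrap. Let $m^*$ be the largest index such that $\norm{e_x^m}{1,h}\leq h^3$ for all $m\leq m^*$. On this range the hypothesis $\norm{Y_h-X_h}{1,h}\leq h^3$ of the geometric perturbation subsection holds with $X_h\to Y_h^m$ and $Y_h\to X_h^m$. The inverse inequality gives $\norm{(e_x^m)_\rho}{L^\infty}\leq Ch^{-1/2}\norm{e_x^m}{1,h}\leq Ch^{5/2}$, so the polygons stay regular and embedded and the BGN step is uniquely solvable. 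We will show $\norm{e_x^m}{1,h}\leq Ch^4$ on this range, which is much smaller than $h^3$. Hence the bootstrap closes up to $M_h$.

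\textbf{Error equation.} Subtract \eqref{eq:BGN formulation} for $X_h^m$ from the defect identity $d_h(\phi_h)$ of \eqref{eq:assembled-defect-definition} for $Y_h^m$. This gives
\[
\bigl(|\widetilde N_h[X_h^m]|^2\pn{X_h^m}e_v^{m+1},\pn{X_h^m}\phi_h\bigr)^h+\tau\stiff{X_h^m}{e_v^{m+1}}{\phi_h}+\bigl[\stiff{X_h^m}{X_h^m}{\phi_h}-\stiff{Y_h^m}{Y_h^m}{\phi_h}\bigr]=-d_h(\phi_h)+R(\phi_h),
\]
where $R$ collects the changes of the mass and stiffness forms from $Y_h^m$ to $X_h^m$ acting on $v_h$. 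By Lemma~\ref{lem:tube}, and since $v_h$ is smooth, $|R(\phi_h)|\leq C\norm{e_x^m}{1,h}\norm{\phi_h}{0,h}$ for normal tests. For tangential tests there is a factor $h^2$, because the tangential mass part vanishes by nodal orthogonality and the stiffness part carries $\tau=h^2$.

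\textbf{Normal step.} Test with $\phi_h=e_v^{m+1}$. Lemma~\ref{lem:normalforce} rewrites the stiffness difference as $\bigl(\partial_s e_x^m\cdot\vec n,\partial_s e_v^{m+1}\cdot\vec n\bigr)$ plus a remainder of size $Ch^{5/2}|e_x^m|_{1,h}|e_v^{m+1}|_{1,h}$. The discrete derivative trick $a(a-b)=\tfrac12(a^2-b^2)+\tfrac12(a-b)^2$ turns this into a telescoping term $\frac1{2\tau}\bigl(\norm{\partial_se_x^{m+1}\cdot\vec n}{}^2-\norm{\partial_se_x^m\cdot\vec n}{}^2\bigr)$. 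The geometry moves from step $m$ to $m+1$, and that contributes $C\norm{e_x}{1,h}^2$. The mass term yields $c\norm{\pn{}e_v^{m+1}}{0,h}^2$, and the $\tau$-stiffness term is nonnegative. The defect contributes $Ch^4\norm{e_v^{m+1}}{0,h}$.

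\textbf{Tangential step.} Test with $\phi_h=\Ih[\beta_hT_h[X_h^m]]$, $\beta_h=\Ih[e_v^{m+1}\cdot T_h[X_h^m]]$. The mass term vanishes. The position stiffness difference is bounded by Lemma~\ref{lem:force} as $Ch^2|e_x^m|_{1,h}\norm{\beta_h}{1,h}$, and the defect by $Ch^6\norm{\beta_h}{0,h}$. Both therefore carry the factor $h^2=\tau$ that matches the left side $\tau|\pt{}e_v^{m+1}|_{1,h}^2$. Lemma~\ref{lem:tancontrol} gives the Poincar\'e bound, and \eqref{eq:tan vs normal} controls the normal--tangential cross term by $C\tau\norm{\pn{}e_v}{0,h}\norm{\pt{}e_v}{1,h}$. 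Dividing by $\tau$ yields
\[
\norm{\pt{X_h^m}e_v^{m+1}}{1,h}\leq C\bigl(h^4+\norm{e_x^m}{1,h}+\norm{\pn{}e_v^{m+1}}{0,h}\bigr).
\]
This is the discrete analogue of \eqref{eq:continuous-tangential-control}.

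\textbf{Closing.} As in Step~3 of Lemma~\ref{lem:continuous-stability}, we track $\norm{\pt{Y_h^m}e_x^m}{1,h}$ and $\norm{e_x^m}{0,h}$ through their discrete time differences. Lemma~\ref{lem:norm} converts the sum of these quantities with the normal energy back into $\norm{e_x^m}{1,h}^2$. Combining with the two steps above gives $E^{m+1}-E^m\leq C\tau(E^m+h^8)$. Since $e_x^0=0$, discrete Gronwall gives \eqref{eq:error bound h4}.

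\textbf{Consequence \eqref{eq:error bound h2}.} Write $X_h^m-Y^{[0]}=e_x^m+(\Ih Y^{[2]}-Y^{[2]})+(Y^{[2]}-Y^{[0]})$. The interpolation term is $O(h^2)$ in $L^2$ and $O(h)$ in $H^1$. Lemma~\ref{lem:continuous-stability} bounds the last term by $Ch^2$ in $H^1$.

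\textbf{Main obstacle.} The hardest part is the bookkeeping of the remainders $R$ and of the geometry changes between steps. Every term arising from replacing $X_h^m$ by $Y_h^m$ inside tangential tests must carry an explicit $h^2$, since otherwise dividing by $\tau$ loses two powers. We will handle this by splitting $v_h$ into its normal and tangential parts and invoking Lemma~\ref{lem:tube} with its $h^{3/2}$ and $h^{5/2}$ factors. This works because $\norm{e_x}{1,h}\leq h^3$ inside the bootstrap.
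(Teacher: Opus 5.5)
Your overall architecture is the paper's: a bootstrap, a normal step with the derivative trick, a tangential step via Lemma~\ref{lem:force} and the tangential Poincar\'e inequality, a Step~3 tracking $\norm{\pt{Y_h^m}e_x^m}{1,h}$ and $\norm{e_x^m}{0,h}$, discrete Gronwall, and the final splitting with Lemma~\ref{lem:continuous-stability}. However, the tangential step fails as you wrote it.

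You test the single subtracted error equation with $\phi_h=\Ih[\beta_hT_h[X_h^m]]$ and assert that the tangential mass part of $R$ vanishes by nodal orthogonality. Only $\mass{X_h^m}{w_h}{\phi_h}=0$ holds. The reference term $\mass{Y_h^m}{v_h}{\phi_h}$ survives, since $\phi_h$ is not nodally tangent to $Y_h^m$. Likewise, your position term $\stiff{X_h^m}{X_h^m}{\phi_h}-\stiff{Y_h^m}{Y_h^m}{\phi_h}$ is not the difference controlled by Lemma~\ref{lem:force}. The $h^6$ bound of Theorem~\ref{thm:comparison-flow-defects} also applies only to $d_h$ tested with $\Ih[\beta_hT_h[Y_h^m]]$. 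Set $\psi_h=\Ih[\beta_h(T_h[X_h^m]-T_h[Y_h^m])]$. The discrepancies are then $\mass{Y_h^m}{v_h}{\phi_h}$ and $\stiff{Y_h^m}{Y_h^m}{\psi_h}$. Each is of size $\norm{e_x^m}{1,h}\norm{\beta_h}{1,h}$ with no factor $h^2$; to leading order they are $\mp\int_I\kappa\beta_h(e_x^m)_\rho\cdot\vec n\,\mathrm d\rho$. After division by $\tau$ they destroy the estimate.

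Bounding them with the $h^{3/2}$, $h^{5/2}$ factors of Lemma~\ref{lem:tube}, as you suggest under ``Main obstacle'', does not help. Those factors are absolute consequences of the bootstrap, not proportional to $\norm{e_x^m}{1,h}$. For example, $|\mass{Y_h^m}{v_h}{\phi_h}|\le Ch^{5/2}\norm{\beta_h}{0,h}$ leaves a source of order $h^{1/2}$ after dividing by $\tau$. That neither gives $h^4$ nor closes the bootstrap at $h^3$.

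The missing idea is an exact cancellation through the defect identity. Since $\mass{Y_h^m}{v_h}{\Ih[\beta_hT_h[Y_h^m]]}=0$, one has $\mass{Y_h^m}{v_h}{\phi_h}+\stiff{Y_h^m}{Y_h^m}{\psi_h}+\tau\stiff{Y_h^m}{v_h}{\psi_h}=d_h(\psi_h)$. Moreover $|d_h(\psi_h)|\le D_1^m\norm{\psi_h}{0,h}\le Ch^4\norm{e_x^m}{1,h}\norm{\beta_h}{1,h}$.

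Equivalently, and this is what the paper does, test the BGN equation with $\Ih[\beta_hT_h[X_h^m]]$ and the defect identity with $\Ih[\beta_hT_h[Y_h^m]]$, with the same $\beta_h$, before subtracting. Your appeal to Lemma~\ref{lem:force} and to the $h^6$ bound suggests you had this pairing in mind, but your error equation and your bookkeeping of $R$ do not implement it. With paired tests:
\begin{itemize}
\item both mass terms vanish exactly;
\item the position terms are precisely the difference in Lemma~\ref{lem:force};
\item the defect is tested tangentially to $Y_h^m$, so $D_2^m\le Ch^6$ applies;
\item the only remainder is $\tau\bigl[\stiff{X_h^m}{v_h}{\Ih[\beta_hT_h[X_h^m]]}-\stiff{Y_h^m}{v_h}{\Ih[\beta_hT_h[Y_h^m]]}\bigr]$. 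It carries $\tau$ explicitly and needs only a bound $C\tau\norm{e_x^m}{1,h}\norm{\beta_h}{1,h}$, which follows from summation by parts onto the smooth $v_h$ and the first estimate of Lemma~\ref{lem:tube}.
\end{itemize}
With this correction your bound $\norm{\pt{X_h^m}e_v^{m+1}}{1,h}\le C(h^4+\norm{e_x^m}{1,h}+\norm{\pn{X_h^m}e_v^{m+1}}{0,h})$ holds, and the rest of your argument goes through as in the paper. The absolute factors of Lemma~\ref{lem:tube} are needed only in Step~3. There $\pt{X_h^m}e_v^{m+1}$ is replaced by $\pt{Y_h^m}e_v^{m+1}$, and the resulting terms are absorbed by $\tau|e_v^{m+1}|_{1,h}^2$.
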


\begin{proof}The proof follows the same structure as in Lemma \ref{lem:continuous-stability}.

Define defect bounds $D_1^m,D_2^m$ by
\begin{equation*}
 |d_h^{m+1}(\phi_h)|\leq D_1^m\norm{\phi_h}{0,h},
\end{equation*}
\begin{equation*}
 |d_h^{m+1}(\pt{Y_h^m}\phi_h)| \leq D_2^m\norm{\pt{Y_h^m}\phi_h}{1,h}, \qquad \delta^m=D_1^m+\tau^{-1}D_2^m.
\end{equation*}
Theorem~\ref{thm:comparison-flow-defects} gives
\begin{equation*}
 D_1^m\leq Ch^4,\qquad D_2^m\leq Ch^6,\qquad \delta^m\leq Ch^4.
\end{equation*}

We prove by induction, first $\left\|e_x^0\right\|_{W^{1, \infty}} = 0 \leq h^3$. We assume that $\left\|e_x^k\right\|_{W^{1, \infty}}\leq h^3$ for $k = 1, \ldots, m$.

\textit{Step 1: normal error.} Subtracting the reference variational equation from the numerical, we obtain the error equation
\begin{equation}\label{eq:error eq}
 \begin{aligned}
 &\mass{X_h^m}{e_v^{m+1}}{\phi_h} +\tau\stiff{X_h^m}{e_v^{m+1}}{\phi_h}\\
 &\qquad+\stiff{X_h^m}{X_h^m}{\phi_h} -\stiff{Y_h^m}{Y_h^m}{\phi_h}\\
 &=-d_h^{m+1}(\phi_h) -\Bigl(\mass{X_h^m}{V_Y^{m+1}}{\phi_h}\\
 &\qquad\qquad -\mass{Y_h^m}{V_Y^{m+1}}{\phi_h}\Bigr)\\
 &\qquad-\tau\left(\stiff{X_h^m}{V_Y^{m+1}}{\phi_h} -\stiff{Y_h^m}{V_Y^{m+1}}{\phi_h}\right)\\
 &:= -d_h^{m+1}(\phi_h) + J_{11}(\phi_h) + \tau J_{12}(\phi_h).
 \end{aligned}
\end{equation}
Here $V_Y^{m+1}=\frac{1}{\tau} (Y_h^{m+1} - Y_h^m)$. The standard perturbation estimate gives
\begin{equation*}
 \begin{aligned}
 J_{11}(\phi_h)\leq C\norm{e_x^m}{1,h}\norm{\phi_h}{0,h}, \qquad J_{12}(\phi_h)\leq C\norm{e_x^m}{1,h}\norm{(\phi_h)_\rho}{L^2(I)}.
 \end{aligned}
\end{equation*}
Take $\phi_h = e_v^{m+1}$, the LHS of \eqref{eq:error eq} gives
\begin{equation}
 \begin{aligned}
 &\mass{X_h^m}{e_v^{m+1}}{\phi_h} +\tau\stiff{X_h^m}{e_v^{m+1}}{\phi_h}\\
 &\qquad+\stiff{X_h^m}{X_h^m}{\phi_h} -\stiff{Y_h^m}{Y_h^m}{\phi_h}\\
 &\geq c\left\|e_v^{m+1} \cdot \widetilde{N}_h[X_h^m]\right\|_{0, h}^2 + \tau \left\|\partial_{s[X_h^m]} e_v^{m+1}\right\|_{L^2(\Gamma[X_h^m])}^2\\
 & \quad - C \left\|e_x\right\|_{W^{1, \infty}(I)} \left\|e_x\right\|_{1, h}\left\|e_v\right\|_{1, h} \\
 & \quad +  \bigl(\partial_{s[Y_h^m]}e_x^m\cdot\vec n_h[Y_h^m], \partial_{s[Y_h^m]}e_v^{m+1}\cdot\vec n_h[Y_h^m]\bigr) _{\Gamma_h[Y_h^m]}
 \end{aligned}
\end{equation}
For the last term, use the derivative trick, we obtain
\begin{equation}
 \begin{aligned}
 &\bigl(\partial_{s[Y_h^m]}e_x^m\cdot\vec n_h[Y_h^m], \partial_{s[Y_h^m]}e_v^{m+1}\cdot\vec n_h[Y_h^m]\bigr) _{\Gamma_h[Y_h^m]} \\
 & = \frac{1}{2\tau}\left(\left\|\partial_{s[Y_h^m]}e_x^{m+1}\cdot\vec n_h[Y_h^m]\right\|_{L^2(\Gamma[Y_h^m])}^2 - \left\|\partial_{s[Y_h^m]}e_x^m\cdot\vec n_h[Y_h^m]\right\|_{L^2(\Gamma[Y_h^m])}^2\right) \\
 & \quad -\frac\tau2 \norm{\partial_{s[Y_h^m]}e_v^{m+1}\cdot \vec n_h[Y_h^m]}{L^2(\Gamma[Y_h^m])}^2 \\
 & = \frac{1}{2\tau}\Bigl(\left\|\partial_{s[Y_h^{m+1}]}e_x^{m+1}\cdot\vec n_h[Y_h^{m+1}]\right\|_{L^2(\Gamma[Y_h^{m+1}])}^2\\
 & \qquad\qquad - \left\|\partial_{s[Y_h^m]}e_x^m\cdot\vec n_h[Y_h^m]\right\|_{L^2(\Gamma[Y_h^m])}^2\Bigr) \\
 & \quad -\frac\tau2 \norm{\partial_{s[Y_h^m]}e_v^{m+1}\cdot \vec n_h[Y_h^m]}{L^2(\Gamma[Y_h^m])}^2 \\
 & \quad - \int_I \left( \frac1{|(Y_h^m)_\rho|} -\frac1{|(Y_h^{m+1})_\rho|} \right) |(e_x^{m+1})_\rho\cdot\vec n_h[Y_h^m]|^2\,\mathrm d\rho\\
 &\quad- \int_I \frac{ \bigl((e_x^{m+1})_\rho\cdot(\vec n_h[Y_h^m]-\vec n_h[Y_h^{m+1}])\bigr) \bigl((e_x^{m+1})_\rho\cdot(\vec n_h[Y_h^m]+\vec n_h[Y_h^{m+1}])\bigr) }{ |(Y_h^{m+1})_\rho| }\,\mathrm d\rho\\
 & \geq \frac{1}{2\tau}\Bigl(\left\|\partial_{s[Y_h^{m+1}]}e_x^{m+1}\cdot\vec n_h[Y_h^{m+1}]\right\|_{L^2(\Gamma[Y_h^{m+1}])}^2\\
 & \qquad\qquad - \left\|\partial_{s[Y_h^m]}e_x^m\cdot\vec n_h[Y_h^m]\right\|_{L^2(\Gamma[Y_h^m])}^2\Bigr) \\
 & \quad -\frac\tau2 \norm{\partial_{s[Y_h^m]}e_v^{m+1}}{L^2(\Gamma[Y_h^m])}^2 - C \tau \left\|V_Y^{m+1}\right\|_{W^{1, \infty}(I)}\left\|e_x^{m+1}\right\|_{1, h}^2 \\
 \end{aligned}
\end{equation}
Combine everything together, we obtain
\begin{equation}
 \begin{aligned}
 &\frac{1}{2\tau}\left(\left\|\partial_{s[Y_h^{m+1}]}e_x^{m+1}\cdot\vec n_h[Y_h^{m+1}]\right\|_{L^2(\Gamma[Y_h^{m+1}])}^2 - \left\|\partial_{s[Y_h^m]}e_x^m\cdot\vec n_h[Y_h^m]\right\|_{L^2(\Gamma[Y_h^m])}^2\right) \\
 & \qquad + c\tau |e_v^{m+1}|_{1, h}^2 + c\left\|e_v^{m+1} \cdot \widetilde{N}_h[X_h^m]\right\|_{0, h}^2 \\
 & \leq D_1^m \left\|e_x^m\right\|_{0, h} + C \left\|e_x^m\right\|_{1, h} \left\|e_v^{m+1}\right\|_{0, h}\\
 & \qquad + C \tau\left\|e_x^m\right\|_{1, h} \left\|e_v^{m+1}\right\|_{1, h} + C \tau \left\|e_x^{m+1}\right\|_{1, h}^2
 \end{aligned}
\end{equation}

\textit{Step 2: tangential error.} Take $\phi_h = I_h(\beta_h T_h[X_h^m])$ in the equation for $X_h^m$, $\phi_h = I_h(\beta_h T_h[Y_h^m])$ in the equation for $Y_h^m$, then subtracting gives the following tangential error equation
\begin{equation}\label{eq:error eq, tangential}
 \begin{aligned}
 &\tau\stiff{X_h^m}{e_v^{m+1}}{I_h(\beta_h T_h[X_h^m])}\\
 &=-d_h^{m+1}(I_h(\beta_h T_h[Y_h^m])) -\Bigl(\stiff{X_h^m}{X_h^m}{I_h(\beta_h T_h[X_h^m])}\\
 &\qquad\qquad -\stiff{Y_h^m}{Y_h^m}{I_h(\beta_h T_h[Y_h^m])}\Bigr)\\
 &\qquad-\tau\Bigl(\stiff{X_h^m}{V_Y^{m+1}}{I_h(\beta_h T_h[X_h^m])}\\
 &\qquad\qquad -\stiff{Y_h^m}{V_Y^{m+1}}{I_h(\beta_h T_h[Y_h^m])}\Bigr)\\
 &:= -d_h^{m+1}(I_h(\beta_h T_h[Y_h^m])) + J_{21}(I_h(\beta_h T_h[Y_h^m])) + J_{22}(I_h(\beta_h T_h[Y_h^m]))
 \end{aligned}
\end{equation}

Since $I_h(\beta_h T_h[Y_h^m]) = \pt{Y_h^m}I_h(\beta_h T_h[Y_h^m])$, for the RHS in \eqref{eq:error eq, tangential}, the defect estimate, Lemma \ref{lem:force}, and the standard perturbation estimate give
\begin{equation}
 \begin{aligned}
 &-d_h^{m+1}(I_h(\beta_h T_h[Y_h^m])) \leq D_2^m \left\|\pt{Y_h^m}I_h(\beta_h T_h[Y_h^m])\right\|_{1, h}\\
 &J_{21}(I_h(\beta_h T_h[Y_h^m])) \leq C h^2 \left\|e_x^m\right\|_{1, h} \left\|\beta_h\right\|_{1, h},\\
 & J_{22}(I_h(\beta_h T_h[Y_h^m])) \leq C \tau \left\|e_x^m\right\|_{1, h} \left\|\beta_h\right\|_{1, h}
 \end{aligned}
\end{equation}
Take $\beta_h = I_h(e_v^{m+1}\cdot T_h[X_h^m])$, $\stiff{X_h^m}{e_v^{m+1}}{I_h(\beta_h T_h[X_h^m])}$ in the LHS of \eqref{eq:error eq, tangential} becomes $\stiff{X_h^m}{e_v^{m+1}}{\pt{X_h^m}e_v^{m+1}}$. Now use Lemma \ref{lem:tancontrol}, we further obtain
\begin{equation}
 \begin{aligned}
 &\tau \stiff{X_h^m}{e_v^{m+1}}{\pt{X_h^m}e_v^{m+1}} \\
 & = \tau \stiff{X_h^m}{\pt{X_h^m}e_v^{m+1}}{\pt{X_h^m}e_v^{m+1}} \\
 & \qquad + \tau \stiff{X_h^m}{\pn{X_h^m}e_v^{m+1}}{\pt{X_h^m}e_v^{m+1}} \\
 & \geq c \tau\left\|\pt{X_h^m}e_v^{m+1}\right\|_{1, h}^2 - C \tau \left\|\pn{X_h^m}e_v^{m+1}\right\|_{0, h} \left\|\pt{X_h^m}e_v^{m+1}\right\|_{1, h}
 \end{aligned}
\end{equation}
Therefore, we derive the following control for the tangential part of the error
\begin{equation}
 \begin{aligned}
 &c \left\|\pt{X_h^m}e_v^{m+1}\right\|_{1, h}^2 \\
 & \leq \frac{D_2^m}{\tau} \left\|\pt{X_h^m}e_v^{m+1}\right\|_{1, h} + C \left(\frac{h^2}{\tau} +1\right) \left\|e_x^m\right\|_{1, h}\left\|\pt{X_h^m}e_v^{m+1}\right\|_{1, h} \\
 & \qquad  + C \left\|\pn{X_h^m}e_v^{m+1}\right\|_{0, h} \left\|\pt{X_h^m}e_v^{m+1}\right\|_{1, h}
 \end{aligned}
\end{equation}

\textit{Step 3: recover the full norm.} Since $e_x^{m+1} = e_x^m + \tau e_v^{m+1}$, we obtain
\begin{equation}
 \begin{aligned}
 \frac{1}{\tau} \left(\left\|e_x^{m+1}\right\|_{0, h}^2 - \left\|e_x^{m}\right\|_{0, h}^2\right) & = 2\int_{I} e_x^{m} e_v^{m+1} \, d\rho + \tau\int_{I} e_v^{m+1} e_v^{m+1}\, d\rho \\
 & \leq C \left\|e_x^m\right\|_{0, h} \left\|e_v^{m+1}\right\|_{0, h} + C\tau \left\|e_v^{m+1}\right\|_{0, h}^2
 \end{aligned}
\end{equation}

Using the equality $u\otimes u - w\otimes w = (u-w)\otimes w + u\otimes(u-w)$, we know that $\left\|\bigl(\pt{Y_h^{m+1}}-\pt{Y_h^m}\bigr)e_x^{m+1}\right\|_{1,h}\leq C\tau \norm{e_x^{m+1}}{1,h}$. Together with $\pt{Y_h^{m+1}}e_x^{m+1} = \pt{Y_h^{m}}e_x^{m} + \tau\pt{Y_h^{m}}e_v^{m+1} + (\pt{Y_h^{m+1}} - \pt{Y_h^{m}})e_x^{m+1}$, we obtain that

\begin{equation}
 \begin{aligned}
 &\frac{1}{\tau} \left(\left\|\pt{Y_h^{m+1}}e_x^{m+1}\right\|_{1, h}^2 - \left\|\pt{Y_h^m}e_x^{m}\right\|_{1, h}^2\right)\\
 & \leq C \left\|e_x^m\right\|_{1, h}\left\|\pt{Y_h^{m}}e_v^{m+1}\right\|_{1, h}\\
 & \qquad + C \left\|e_v^{m+1}\right\|_{1, h} \left\|(\pt{Y_h^{m+1}} - \pt{Y_h^{m}})e_x^{m+1}\right\|_{1, h} \\
 & \qquad + C\tau \left\|\pt{Y_h^m}e_v^{m+1}\right\|_{1, h}^2 + C \frac{1}{\tau} \left\|(\pt{Y_h^{m+1}} - \pt{Y_h^{m}})e_x^{m+1}\right\|_{1, h}^2 \\
 & \qquad + C \frac{1}{\tau}  \left\|e_x^m\right\|_{1, h} \left\|(\pt{Y_h^{m+1}} - \pt{Y_h^{m}})e_x^{m+1}\right\|_{1, h} \\
 & \leq C \left\|e_x^m\right\|_{1, h}\left\|\pt{Y_h^{m}}e_v^{m+1}\right\|_{1, h} + C\tau\left\|e_x^{m+1}\right\|_{1, h}\left\|e_v^{m+1}\right\|_{1, h}\\
 & \qquad + C\tau \left\|\pt{X_h^m}e_v^{m+1}\right\|_{1, h}^2 + C \tau \left\|e_x^{m+1}\right\|_{1, h}^2 + C \left\|e_x^m\right\|_{1, h} \left\|e_x^{m+1}\right\|_{1, h}\\
 & \qquad + C \tau \left\|e_x^m\right\|_{W^{1, \infty}(I)}^2 \left\|e_v^{m+1}\right\|_{1, h}^2.
 \end{aligned}
\end{equation}

Combine everything together, we deduce that
\begin{equation}
 \begin{aligned}
 &\frac{1}{2\tau}\left(\left\|\partial_{s[Y_h^{m+1}]}e_x^{m+1}\cdot\vec n_h[Y_h^{m+1}]\right\|_{L^2(\Gamma[Y_h^{m+1}])}^2 - \left\|\partial_{s[Y_h^m]}e_x^m\cdot\vec n_h[Y_h^m]\right\|_{L^2(\Gamma[Y_h^m])}^2\right) \\
 & \qquad +\frac{1}{2\tau} \left(\left\|\pt{Y_h^{m+1}}e_x^{m+1}\right\|_{1, h}^2 - \left\|\pt{Y_h^m}e_x^{m}\right\|_{1, h}^2\right)\\
 & \qquad + \frac{1}{2\tau} \left(\left\|e_x^{m+1}\right\|_{0, h}^2 - \left\|e_x^{m}\right\|_{0, h}^2\right)\\
 & \qquad + c\tau |e_v^{m+1}|_{1, h}^2 + c\left\|e_v^{m+1} \cdot \widetilde{N}_h[X_h^m]\right\|_{0, h}^2 + c \varrho\left\|\pt{X_h^m}e_v^{m+1}\right\|_{1, h}^2 \\
 & \leq D_1^m \left\|e_x^m\right\|_{0, h} + \varrho\frac{D_2^m}{\tau} \left\|\pt{X_h^m}e_v^{m+1}\right\|_{1, h}+ C \left\|e_x^m\right\|_{1, h} \left\|e_v^{m+1}\right\|_{0, h}\\
 & \qquad + C \tau\left\|e_x^m\right\|_{1, h} \left\|e_v^{m+1}\right\|_{1, h} + C \tau \left\|e_x^{m+1}\right\|_{1, h}^2\\
 & \qquad + C \varrho\left(\frac{h^2}{\tau} +1\right) \left\|e_x^m\right\|_{1, h}\left\|\pt{X_h^m}e_v^{m+1}\right\|_{1, h} \\
 & \qquad  + C \varrho\left\|\pn{X_h^m}e_v^{m+1}\right\|_{0, h} \left\|\pt{X_h^m}e_v^{m+1}\right\|_{1, h} \\
 & \qquad +  C \left\|e_x^m\right\|_{0, h} \left\|e_v^{m+1}\right\|_{0, h} + C\tau \left\|e_v^{m+1}\right\|_{0, h}^2 \\
 & \qquad + C \left\|e_x^m\right\|_{1, h}\left\|\pt{Y_h^{m}}e_v^{m+1}\right\|_{1, h} + C\tau\left\|e_x^{m+1}\right\|_{1, h}\left\|e_v^{m+1}\right\|_{1, h}\\
 & \qquad + C\tau \left\|\pt{X_h^m}e_v^{m+1}\right\|_{1, h}^2 + C \tau \left\|e_x^{m+1}\right\|_{1, h}^2 + C \left\|e_x^m\right\|_{1, h} \left\|e_x^{m+1}\right\|_{1, h}\\
 & \qquad + C \tau \left\|e_x^m\right\|_{W^{1, \infty}(I)}^2 \left\|e_v^{m+1}\right\|_{1, h}^2.
 \end{aligned}
\end{equation}
Use the fact $\tau = h^2$, and the induction assumption $\left\|e_x^k\right\|_{W^{1, \infty}(I)}\leq h^3$ for $k\leq m$ with choosing $\varrho>0$ small to absorb the RHS to the LHS, and use the norm equivalence, we derive that
\begin{equation}
 \begin{aligned}
 &\frac{1}{2\tau}\left(\left\|\partial_{s[Y_h^{m+1}]}e_x^{m+1}\cdot\vec n_h[Y_h^{m+1}]\right\|_{L^2(\Gamma[Y_h^{m+1}])}^2 - \left\|\partial_{s[Y_h^m]}e_x^m\cdot\vec n_h[Y_h^m]\right\|_{L^2(\Gamma[Y_h^m])}^2\right) \\
 & \qquad +\frac{1}{2\tau} \left(\left\|\pt{Y_h^{m+1}}e_x^{m+1}\right\|_{1, h}^2 - \left\|\pt{Y_h^m}e_x^{m}\right\|_{1, h}^2\right)\\
 & \qquad + \frac{1}{2\tau} \left(\left\|e_x^{m+1}\right\|_{0, h}^2 - \left\|e_x^{m}\right\|_{0, h}^2\right)\\
 & \qquad + c\tau |e_v^{m+1}|_{1, h}^2 + c\left\|e_v^{m+1} \cdot \widetilde{N}_h[X_h^m]\right\|_{0, h}^2 + c \left\|\pt{X_h^m}e_v^{m+1}\right\|_{1, h}^2 \\
 & \leq C(D_1^m)^2 + C \left(\frac{D_2^m}{\tau}\right)^2 + C \left\|e_x^m\right\|_{1, h}^2 + C \left\|e_x^{m+1}\right\|_{1, h}^2 \\
 & \leq C(\delta^m)^2 + C \Bigl(\left\|\partial_{s[Y_h^m]}e_x^{m}\cdot\vec n_h[Y_h^m]\right\|_{L^2(\Gamma[Y_h^m])}^2\\
 & \qquad\qquad + \left\|\pt{Y_h^{m}}e_x^{m}\right\|_{1, h}^2 + \left\|e_x^{m}\right\|_{0, h}^2\Bigr) \\
 & \qquad + C \Bigl(\left\|\partial_{s[Y_h^{m+1}]}e_x^{m+1}\cdot\vec n_h[Y_h^{m+1}]\right\|_{L^2(\Gamma[Y_h^{m+1}])}^2\\
 & \qquad\qquad + \left\|\pt{Y_h^{m+1}}e_x^{m+1}\right\|_{1, h}^2 + \left\|e_x^{m+1}\right\|_{0, h}^2\Bigr)
 \end{aligned}
\end{equation}

For sufficiently small $\tau$, discrete Gronwall and the norm equivalence give
\begin{equation}
 \begin{aligned}
 \max_{k\leq m+1}\left\|e_x^{k}\right\|_{H^1(\Gamma[Y_h^k])}^2 \leq C\left(\left\|e_x^{0}\right\|_{H^1(\Gamma[Y_h^0])}^2+\sum_{k\leq m}\tau (\delta^k)^2\right).
 \end{aligned}
\end{equation}
Since $e_x^0=0$ and $\delta^m\leq Ch^4$, the right-hand side is $Ch^8$, and we know $\left\|e_x^{m+1}\right\|_{H^1(\Gamma[Y_h^{m+1}])}\leq C h^4$.

Finally, the inverse inequality provides $\left\|e_x^{m+1}\right\|_{W^{1, \infty}} \leq C h^{7/2}$. Choosing $h_0$ sufficiently small, we know $\left\|e_x^{m+1}\right\|_{W^{1, \infty}} \leq h^3$, thus the induction holds, and we have \eqref{eq:error bound h4}.

Standard interpolation and Lemma \ref{lem:continuous-stability} give
\begin{equation}
 \begin{aligned}
 & \sup_m \left\|Y_h^m - Y^{[2]}(t_m)\right\|_{L^2(I)} + h \left\|Y_h^m - Y^{[2]}(t_m)\right\|_{H^1(I)} \leq C h^2 \\
 & \sup_{0\leq t\leq T}\left\|Y^{[0]}(t) - Y^{[2]}(t)\right\|_{H^1(I)} \leq C h^2.
 \end{aligned}
\end{equation}
\eqref{eq:error bound h2} is thus a direct result of \eqref{eq:error bound h4}.
\end{proof}

To obtain an optimal convergence rate, we need to control manifold distance with the $L^2$ error.
\begin{lemma}[Manifold distance]\label{lem:area}
For counterclockwise parametrization $X, Y:I\to\R^2$ of simple closed curves with $|X_\rho|, |Y_\rho| > c_0 > 0$,
\begin{equation}
 \Md(\Gamma[X],\Gamma[Y]) \leq\frac{1}{2} \left(\left\|X_\rho\right\|_{L^2(I)} + \left\|Y_\rho\right\|_{L^2(I)} \right)\left\|Y-X\right\|_{L^2(I)}.
\end{equation}
\end{lemma}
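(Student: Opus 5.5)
We prove the bound with a homotopy between the two parametrizations and the change-of-variables formula for area. Set $Z(\rho,\theta)=X(\rho)+\theta\bigl(Y(\rho)-X(\rho)\bigr)$ for $(\rho,\theta)\in I\times[0,1]$. This is a closed, Lipschitz, $1$-periodic-in-$\rho$ family joining $\Gamma[X]$ to $\Gamma[Y]$. Let $w(x)$ be the winding number of the closed curve $\Gamma[Y]$ minus that of $\Gamma[X]$ about a point $x\notin\Gamma[X]\cup\Gamma[Y]$. Both curves are simple and counterclockwise, so $w=\mathbf 1_{\Omega_Y}-\mathbf 1_{\Omega_X}$. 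Hence $|w|=\mathbf 1_{\Omega_X\triangle\Omega_Y}$, and
\begin{equation*}
 \Md(\Gamma[X],\Gamma[Y])=\int_{\R^2}|w(x)|\dd x .
\end{equation*}

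The key identity is the degree formula: for a.e.\ $x$, $w(x)$ equals the oriented count of preimages of $x$ under $Z:I\times[0,1]\to\R^2$. Equivalently, the boundary of the cylinder $I\times[0,1]$ consists of the two circles $\theta=0,1$, and by homotopy invariance of the winding number
\begin{equation*}
 w(x)=\sum_{Z(\rho,\theta)=x}\operatorname{sign}\det\bigl(Z_\rho,Z_\theta\bigr)
\end{equation*}
for a.e.\ $x$. The sign convention depends on the orientation and is irrelevant after absolute values. To make this rigorous for Lipschitz maps, I would either approximate $X,Y$ by smooth maps, or use the area formula together with $\int_{\R^2}w\,\varphi\,\dd x=\int_{I\times[0,1]}\varphi(Z)\det(Z_\rho,Z_\theta)\dd\rho\dd\theta$ for test functions $\varphi$. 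The latter follows from Stokes' theorem applied to $\varphi\,\dd x_1\wedge\dd x_2=\dd\eta$ pulled back by $Z$. This rigor step is the main obstacle. The rest is elementary.

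From the degree formula and the area formula,
\begin{equation*}
 \int_{\R^2}|w|\dd x\le\int_{\R^2}\#\{Z^{-1}(x)\}\dd x=\int_0^1\!\!\int_I\bigl|\det(Z_\rho,Z_\theta)\bigr|\dd\rho\dd\theta .
\end{equation*}
Now $Z_\theta=Y-X$ and $Z_\rho=(1-\theta)X_\rho+\theta Y_\rho$, so
\begin{equation*}
 |\det(Z_\rho,Z_\theta)|\le\bigl((1-\theta)|X_\rho|+\theta|Y_\rho|\bigr)|Y-X|.
\end{equation*}
Integrating in $\theta$ gives $\tfrac12\int_I(|X_\rho|+|Y_\rho|)|Y-X|\dd\rho$. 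Cauchy--Schwarz then yields the claimed bound
\begin{equation*}
 \Md(\Gamma[X],\Gamma[Y])\le\tfrac12\bigl(\norm{X_\rho}{L^2(I)}+\norm{Y_\rho}{L^2(I)}\bigr)\norm{Y-X}{L^2(I)} .
\end{equation*}
The lower bounds $|X_\rho|,|Y_\rho|>c_0$ are used only to guarantee that the curves are regular and that $w$ is well defined off a null set.
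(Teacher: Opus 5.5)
Your proposal is correct and follows the paper's proof: the same linear homotopy (with $X$ and $Y$ swapped), the same winding-number representation of $\Md$, the same pointwise bound on $|Z_\rho\times Z_\theta|$, then integration in $\theta$ and Cauchy--Schwarz. The only difference is that you sketch a justification, via Stokes and the area formula, of the key inequality $\int_{\R^2}|w|\dd x\le\int_0^1\!\int_I|\det(Z_\rho,Z_\theta)|\dd\rho\dd\theta$, whereas the paper simply cites it from Michor--Mumford.
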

\begin{proof}
Let $\chi_{\Omega[X]}$ be the indicator function of $\Omega[X]$, and denote the winding number of $\Gamma[X]$ around $\vec{p}$ by $w_{\Gamma[X]}(\vec{p})$. Since $\Gamma[X]$ is a simple closed curve, and $X$ is a counterclockwise parametrization, we know that $\chi_{\Omega[X]} = w_{\Gamma[X]}$, a.e..

The manifold distance $\Md(\Gamma[X],\Gamma[Y])$ can be written as
\begin{equation}
 \Md(\Gamma[X],\Gamma[Y]) = |\Omega[X] \mathbin\triangle \Omega[Y]| = \int_{\mathbb{R}^2} |\chi_{\Omega[X]} - \chi_{\Omega[Y]}| = \int_{\mathbb{R}^2} |w_{\Gamma[X]} - w_{\Gamma[Y]}|.
\end{equation}

Let $c(\rho, \theta) = (1-\theta)Y(\rho) + \theta X(\rho)$. From \cite[(3)]{michor2006riemannian}, we know that
\begin{equation}
 \begin{aligned}
 \int_{\mathbb{R}^2} |w_{\Gamma[X]} - w_{\Gamma[Y]}| &\leq \int_0^1 \int_0^{1} |\partial_\rho c \times \partial_\theta c|\rm{d}\theta\rm{d}\rho \\
 & = \int_0^1 \int_0^{1} |((1-\theta)Y_\rho + \theta X_\rho) \times (Y - X)| \rm{d}\theta\rm{d}\rho  \\
 & \leq \int_0^1 \int_0^{1} (1-\theta)|Y_\rho \times (Y - X)| + \theta |X_\rho \times (Y - X) |\rm{d}\theta \rm{d}\rho \\
 & \leq \frac{1}{2} \left(\left\|Y_\rho\right\|_{L^2(I)} + \left\|X_\rho\right\|_{L^2(I)} \right)\left\|Y-X\right\|_{L^2(I)}
 \end{aligned}
\end{equation}
Combine the two equations, we get the desired result.
\end{proof}

\textit{Proof of the main theorem:} Apply the Lemma with $X_h^m$ and $Y^{[0]}(t_m)$. Since $Y^{[0]}$ differs from the solution $X$ of mean curvature flow only by a tangential velocity, which does not change the shape of the curve \cite{DeckelnickDziukElliott2005}, we have $\Gamma[Y^{[0]}(t_m)] = \Gamma[X(t_m)] = \Gamma(t_m)$. Moreover, Theorem~\ref{thm:main2} gives $\max_m\left(\norm{X_h^m-Y^{[0]}(t_m)}{L^2(I)}+h\norm{X_h^m-Y^{[0]}(t_m)}{H^1(I)}\right)\leq Ch^2$. Therefore, we have
\begin{equation}
 \begin{aligned}
 \Md(\Gamma[X_h^m],\Gamma[X(t_m)]) &\leq C\left(\left\|X_{h,\rho}^m\right\|_{L^2(I)} + \left\|Y^{[0]}_\rho(t_m)\right\|_{L^2(I)} \right)h^2 \\
 &\leq C \left(2\left\|Y^{[0]}_\rho(t_m)\right\|_{L^2(I)} + h \right)h^2 \leq C h^2.
 \end{aligned}
\end{equation}
The proof of Theorem~\ref{thm:main} is complete.

\section{Numerical results}\label{sec:numerics}
Extensive numerical experiments for the BGN method can be found in \cite{BGN2007b,BGN2008,BGN2020}. Here we only test the newly defined $Y^{[0]}$ and the defect.

We denote the BGN numerical solution at $t_m = m\tau$ by $X_h^m$, with $\tau=h^2$. We take the ellipse $X_0(\rho)=(2\cos2\pi\rho,\sin2\pi\rho)$ and $T=13/128\approx 0.1$. The comparison flow $Y^{[0]}$ in Section~\ref{sec:bea} is computed with BDF5 in time and $P^8$ elements in space.

Table~\ref{tab:ellipse} reports the defect and the tangential defect in \eqref{eq:zeroth-comparison-defects},
\begin{equation*}
 \sup_{\phi_h\in[S_h]^2}\frac{|d_h^{[0]}(\phi_h)|}{\norm{\phi_h}{0,h}}, \qquad \sup_{\beta_h\in S_h}\frac{\left|d_h^{[0]}\bigl(\Ih[\beta_hT_h[Y_h^{[0]}]]\bigr)\right|}{\norm{\beta_h}{0,h}},
\end{equation*}
and the errors $\norm{X_h^M-Y^{[0]}(T)}{L^2(I)}$ and $\norm{X_h^M-Y^{[0]}(T)}{H^1(I)}$.

\begin{table}[htbp]
\centering\small\setlength{\tabcolsep}{2pt}
\begin{tabular}{rcccccccc}
\toprule
$h$ & defect & Rate & tangential defect & Rate & $L^2$ error & Rate & $H^1$ error & Rate \\
\midrule
$2^{-4}$ & $1.5735\times10^{-1}$ & -- & $6.3868\times10^{-3}$ & -- & $9.0964\times10^{-2}$ & -- & $4.0227\times10^{-1}$ & -- \\
$2^{-5}$ & $4.7321\times10^{-2}$ & 1.73 & $8.0442\times10^{-4}$ & 2.99 & $2.2287\times10^{-2}$ & 2.03 & $2.0275\times10^{-1}$ & 0.99 \\
$2^{-6}$ & $1.2561\times10^{-2}$ & 1.91 & $5.7902\times10^{-5}$ & 3.80 & $5.6009\times10^{-3}$ & 1.99 & $1.0152\times10^{-1}$ & 1.00 \\
$2^{-7}$ & $3.1911\times10^{-3}$ & 1.98 & $3.7241\times10^{-6}$ & 3.96 & $1.4016\times10^{-3}$ & 2.00 & $5.0775\times10^{-2}$ & 1.00 \\
$2^{-8}$ & $8.0106\times10^{-4}$ & 1.99 & $2.3456\times10^{-7}$ & 3.99 & $3.5048\times10^{-4}$ & 2.00 & $2.5390\times10^{-2}$ & 1.00 \\
\bottomrule
\end{tabular}
\caption{Ellipse $X_0(\rho)=(2\cos2\pi\rho,\sin2\pi\rho)$ at $T=13/128$ with $\tau=h^2$.}\label{tab:ellipse}
\end{table}

We also take the nonconvex five-fold flower
\begin{equation*}
 X_0(\rho)=(1+0.1\cos10\pi\rho)(\cos2\pi\rho,\sin2\pi\rho)
\end{equation*}
with the same $T$. The results are reported in Table~\ref{tab:flower}.

\begin{table}[htbp]
\centering\small\setlength{\tabcolsep}{2pt}
\begin{tabular}{rcccccccc}
\toprule
$h$ & defect & Rate & tangential defect & Rate & $L^2$ error & Rate & $H^1$ error & Rate \\
\midrule
$2^{-4}$ & $1.4260\times10^{-1}$ & -- & $6.6757\times10^{-3}$ & -- & $6.4372\times10^{-2}$ & -- & $3.0404\times10^{-1}$ & -- \\
$2^{-5}$ & $4.0195\times10^{-2}$ & 1.83 & $6.8189\times10^{-4}$ & 3.29 & $1.4661\times10^{-2}$ & 2.13 & $1.4952\times10^{-1}$ & 1.02 \\
$2^{-6}$ & $1.0408\times10^{-2}$ & 1.95 & $5.0639\times10^{-5}$ & 3.75 & $3.7323\times10^{-3}$ & 1.97 & $7.4690\times10^{-2}$ & 1.00 \\
$2^{-7}$ & $2.6262\times10^{-3}$ & 1.99 & $3.3366\times10^{-6}$ & 3.92 & $9.3775\times10^{-4}$ & 1.99 & $3.7342\times10^{-2}$ & 1.00 \\
$2^{-8}$ & $6.5825\times10^{-4}$ & 2.00 & $2.1555\times10^{-7}$ & 3.95 & $2.3473\times10^{-4}$ & 2.00 & $1.8671\times10^{-2}$ & 1.00 \\
\bottomrule
\end{tabular}
\caption{Five-fold flower $X_0(\rho)=(1+0.1\cos10\pi\rho)(\cos2\pi\rho,\sin2\pi\rho)$ at $T=13/128$ with $\tau=h^2$.}\label{tab:flower}
\end{table}

In both examples, the defects are of orders $h^2$ and $h^4$, in agreement with \eqref{eq:zeroth-comparison-defects}. The errors are of order $h^2$ in $L^2$ and $h$ in $H^1$, in agreement with Theorem~\ref{thm:main2}.

\bibliographystyle{siamplain}
\bibliography{references}
\end{document}